\let\footnote=\endnote
\newcolumntype{P}[1]{>{\centering\arraybackslash}p{#1}}
\preto\subequations{\ifhmode\unskip\fi}
\newcommand{\MLB}[1]{{\color{blue}{#1}}}
\newcommand{\ML}[1]{{\color{black}{#1}}}
\newcommand{\WMR}[1]{{\color{black}{#1}}}
\newcommand{\WMRMin}[1]{{\color{blue}{#1}}}
\def\Rset{{\mathbb R}}
\def\bbbone{{\mathchoice {\rm 1\mskip-4mu l} {\rm 1\mskip-4mu l}
{\rm 1\mskip-4.5mu l} {\rm 1\mskip-5mu l}}}
\def\R{{\mathcal R}}
\def\Z{{\mathcal Z}}
\newenvironment{proof}{\textbf{Proof:}}
\newcounter{SAME}
\newtheorem{thm}[SAME]{Theorem}
\newtheorem{prop}[SAME]{Proposition}
\newtheorem{rem}{Remark}
\newlength{\Oldarrayrulewidth}
\title{\vspace{-1.095in} 
Drone-Delivery Network for Opioid Overdose -  Nonlinear Integer Queueing-Optimization Models and Methods}
\author {Miguel A. Lejeune \footnote{George Washington University, Washington, DC, USA; {\tt mlejeune@gwu.edu}. M. Lejeune acknowledges the support of the National Science Foundation (grant ECCS-2114100) and the Office of Naval Research (grant N00014-22-1-2649).}\ , 
Wenbo Ma \footnote{George Washington University, Washington, DC, USA; {\tt wenboma2011@gmail.com}}}
\date{}
\begin{document}
\vspace{-0.50in}
\maketitle

\vspace{-0.43in}
\begin{abstract}
\noindent 
We propose a new stochastic emergency network design model that uses a fleet of drones to quickly deliver \ML{naloxone} in response to opioid overdoses. The network is represented as a collection of $M/G/K$ queueing systems in which the capacity $K$ of each system is a decision variable and the service time is modelled as a decision-dependent random variable.
The model is an 
optimization-based queueing problem which locates fixed (drone bases) and mobile (drones) servers and determines the drone dispatching decisions, and takes the form of a  nonlinear integer problem intractable in its original form. 
We develop an efficient reformulation and algorithmic framework. 
Our approach reformulates the multiple nonlinearities (fractional, polynomial, exponential, factorial terms) to give a mixed-integer linear programming (MILP) formulation. We demonstrate its~generalizablity and show that the problem of minimizing the average response time of a \ML{collection of $M/G/K$ queueing systems} with unknown capacity $K$ is always MILP-representable. We design \ML{an outer approximation branch-and-cut algorithmic framework which is computationally efficient} and scales well. 
The analysis based on real-life data reveals that drones can in Virginia Beach: 1)~decrease the response time by \ML{82\%}, 
2) increase the survival chance by more than \ML{273\%}, 
3) save up to \ML{33} additional lives per year, and 4) provide annually up to \ML{279} additional quality-adjusted life years. 

\vspace{0.05in}
\noindent
Key words: opioid overdose, drone delivery,  optimization-based queueing model, survival chance, mixed-integer nonlinear programming, stochastic network design
\end{abstract}


\thispagestyle{plain}
\markboth{LEJEUNE-MA}{OVERDOSE}

\def\Rset{{\mathbb R}}
\def\bbbone{{\mathchoice {\rm 1\mskip-4mu l} {\rm 1\mskip-4mu l}
{\rm 1\mskip-4.5mu l} {\rm 1\mskip-5mu l}}}
\def\R{{\mathcal R}}
\def\Z{{\mathcal Z}}
\def\eps{{\varepsilon}}
\baselineskip=2\normalbaselineskip

\vspace{-0.15in}

\baselineskip=1.3\normalbaselineskip
\setlength{\parskip}{0.01\baselineskip}

\vspace{-0.0875in}
\section{Introduction}\label{sec_intro}
\vspace{-0.1in}

From 1999 to 2019, almost 500,000 people died from overdose in the USA \cite{cdc19992019} which is the leading cause of death for 25- to 64-year old US citizens. 
Opioid overdose is a life-threatening condition that, if not treated within minutes, can lead to severe neurological damage and death \cite{BucklandDesignConsider}. 
The chance of survival of an opiate overdose victim decreases by 10\% with each minute passing before resuscitation is attempted \cite{ornato2020feasibility}.
Although emergency medical services (EMS) are located to optimize access to the population, the median arrival time of US EMS is between 7 and 8 minutes and can go beyond 14 minutes in rural, geographically- challenged, or high-traffic urban areas
\cite{Johnson2021impact}.
In that respect,  \citeauthor{gao2020dynamic} \cite{gao2020dynamic} report that lessening the response time by one minute traditionally requires adding ambulances, each costing around  \$200,000, whereas a \$10,000 drone can decrease the response time by two minutes. Consequently, medical drones are viewed as a   promising way to reduce response times and to enhance EMS performance \cite{PULSIRI19}.
This motivates us to design a drone-based network to provide timely medical treatment and to study its potential to increase the chance of survival of overdose victims.

\vspace{-0.1in}
\subsection{Background }\label{BACK}
\vspace{-0.085in}
The opioid overdose epidemic is ramping up as the number of incidents has quadrupled in the last 15 years. 
In the midst of this crisis, there are about 130 opioid overdose deaths each day \cite{ornato2020feasibility}. 
The first wave of overdoses began in 1990s and was associated to prescription opioids. The 2010 second wave was due to heroin while the third one in 2013 was caused by synthetic opioids, in particular the illicitly manufactured fentanyl \cite{cdcopioid}.
The ongoing COVID-19 pandemic has further  exacerbated the opioid health crisis.  
According to the Centers for Disease Control and Prevention (CDC) \cite{cdc81000}, over 81,000 drug overdose deaths occurred in the USA between May 2019 and May 2020. 
Synthetic opioids are the primary driver behind the surge in overdose deaths. 
The death counts related to the synthetic opioid increased by 38.4\% from the 12-month period ending in June 2019 to the 12-month period ending in May 2020. 
In addition to the substantial human toll, opioid overdose poses a great burden on the economy. 
The Society of Actuaries \cite{soa2019} estimates that the economic cost of opioid misuse amounts to at least \$631 billions from 2015 to 2019 in the USA and includes the cost for healthcare services, premature mortality, criminal justice activity, and related educational and assistance program.
The White House Council of Economic Advisers (CEA) estimates the 2015 total cost of opioid
overdoses to \$504 billion with \$431.7 billion due to mortality costs and \$72.3 billion due to health, productivity, and criminal justice costs \cite{2018Brill}.

Many overdose-related deaths are preventable if naloxone, an overdose-reversal medication, can be delivered within minutes following the breath cessation of the overdose victim \cite{BucklandDesignConsider}. Naloxone 
can be safely administered by a layperson witnessing an overdose incident \cite{whowebiste2020} and its increased utilization has contributed to reducing the mortality rate of opioid overdoses as stressed in \cite{LynnOverdoseStats}.
As opioids can slow down or even entirely stop breathing, permanent brain damage can occur within only four minutes of oxygen deprivation. This underscores the need of administering naloxone timely to prevent brain damages or death 
and explains why naloxone access is of the US Department of Health and Human Services’ three priority areas to combat the opioid crisis~\cite{ToddAlexOpioid}. 
In order to mitigate the opioid crisis,
\citeauthor{ornato2020feasibility} \cite{ornato2020feasibility} advocate the development of a system that allows 911 dispatchers, trained as drone pilots, to swiftly deliver naloxone with drones to bystanders and to guide them to administer naloxone. 

Drones, also known as unmanned aerial vehicles (UAVs), are an emerging technology that can accomplish time-sensitive medical delivery tasks by fastening the time-to-scene response, particularly in the areas lacking established transportation network. 
A test 
in Detroit showed that the DJI 'Inspire 2' drones turned out to be systematically faster than ambulances -- the traditional first responders -- in delivering a naloxone nasal spray toolkit \cite{TukelTimeToScene}. 
In order to make drone deliveries of naloxone a reality and to leverage their full capability,
drones need to be strategically deployed 
and assigned in a timely manner to the randomly arriving overdose-triggered requests (OTRs)  to account for the spatial and temporal uncertainty of opioid overdose incidents. 
The design of drone networks that sets up drone bases, pre-positions drones, and determines their assignment to overdose emergencies remains an open problem in the literature. In that regard, \citeauthor{BucklandDesignConsider} \cite{BucklandDesignConsider} stress the need to develop drone network optimization models for opioid overdoses so that dispatchers  “{\it understand when and where to dispatch drones}”. 

\vspace{-0.085in}
\subsection{Structure and Contributions} \label{STRU}
\vspace{-0.085in}

The contributions of this study are fourfold spanning across modeling, reformulation, algorithmic development, and emergency medical service practice. 

\noindent
{\bf Modeling:} 
We develop a novel network design queueing-optimization model to help EMS operators respond quickly and efficiently to an opioid overdose via the drone-based delivery of naloxone.
The proposed model is a stochastic service system design model with congestion 
\cite{BERMAN-KRASS2019} 
 and represents the drone network as a \ML{collection of $M/G/K$ queueing systems}.
The model  determines the location of drone bases (DB) and drones as well as the dispatching of drones to overdose incidents. 
The model has two distinct and novel features.
First, while some studies consider $M/M/K$ queueing systems in which $K$ is a decision variable, our optimization model is the first one -- to our knowledge -- to consider $M/G/K$ queueing systems (which generalize the  $M/M/K$ system) 
in which the number of mobile servers $K$ (system capacity) 
is a decision variable. 
The possibility to adjust the capacity of the queueing system is a critical feature. \ML{A recent study \cite{BOCHAN} argues that “{\it an integrated location-queueing model that simultaneously optimizes the location and number of drone resources can reduce overall resource needs by more than 50\% without degrading performance}”.}
Second,  while the travel time between facilities and demand locations is for mobile servers an integral part of the service time, which hence depends on location and assignment decisions, the extant literature assumes that the average service time and the service rate are constants known a priori. 
Contrasting with this, our model represents the service time, and thus the queueing delay and response time, as functions of the location and assignment decisions. 
This modeling approach allows us to capture the decision-dependent uncertain nature of the service time whose expected value is calculated endogenously, and can be applied (and even be more beneficial) for
other mobile server networks (e.g., ambulances, helicopters) in which the travel time can be a larger portion of the service time. We are not aware of any such model in the literature.

\noindent
{\bf Reformulation:} 
The base formulation of the proposed drone network design model is a nonlinear integer problem which includes fractional, polynomial, exponential, and factorial terms. We derive in Theorem \ref{T2} 
a lifted mixed-integer linear programming (MILP) reformulation. 
Theorem \ref{TH-GEN} shows that the reformulation method is highly generalizable and that optimization models minimizing the \ML{average network response time} of a series of interdependent $M/G/K$ queueing systems with unknown number of servers $K$ are always MILP-representable. 

\noindent
{\bf Algorithm:} 
We design two outer approximation methods to solve the lifted MILP reformulation. The first one is an outer approximation algorithm based on the concept of lazy constraints to attenuate the challenges due to the large dimension of the constraint space. 
The second algorithm is an outer approximation branch-and-cut algorithm that involves the dynamic incorporation of valid inequalities and optimality cuts. 
The computational study shows that the second algorithm based on the reduced-size MILP reformulation is the most efficient and scalable, and solves to optimality all problem instances in one hour while {\sc Gurobi} only solves 10\% of those. 
We show that the reduced response times are not only due to drones but also to our modeling approach which significantly increases the chance of survival as compared to using greedy heuristics and a benchmark model. 
In short, this study fills in the research gap underlined by \cite{BucklandDesignConsider} who argue that 
further research is needed “{\it to develop a decision support system to aid the 911 dispatchers to efficiently assign the drones in a time-sensitive environment}” such as opioid overdoses and that “{\it such decision support requires advances in resource allocation and optimization algorithms that consider under-resourced environment}”. 

\noindent
{\bf Data-driven EMS insights:} 
Using real-life overdose data, we 
demonstrate the extent to which the drone network contributes in reducing the response time, in increasing the survival chance of overdose victims, and thereby in saving lives. The cross-validation, \ML{the comparison with another model and two greedy heuristic methods}, and the simulations analyses underline the applicability and robustness of the network and its performance. Additional tests attest the low cost of a drone network and its flexibility to adjust to the spatio-temporal uncertainty of overdose incidents. The quality-adjusted 
life year (QALY) underscores the largely increased quality of life and the low incremental QALY cost.

The remainder of this paper is organized as follows. 
The literature review of the various fields intersecting with this study is in Section \ref{litrev}. 
Section \ref{sec_model} derives the closed-form formulas for the queueing metrics used to assess the performance of the drone response network, presents the base formulation of the model, and analyzes its complexity. 
Section \ref{sec_reformulation} is devoted to the reformulation method while Section \ref{sec_ALGO} describes the algorithms. 
Section \ref{sec_TESTS} describes the numerical  study based on overdose real-life data from Virginia Beach, presents the computational efficiency of the method, and analyzes the healthcare benefits obtained with our model. Section \ref{sec_conclusion} provides concluding remarks. 
We refer the reader to Appendix \ref{sec:abbre} for a list of the acronyms used in this paper and to Appendix \ref{sec:notations} for a list of the mathematical notations.

\vspace{-0.15in}
\section{Literature Review and Gaps}\label{litrev}
\vspace{-0.12in}
This study propose contributions spanning across three fields -- drone-based response to medical emergencies,  
network design queueing models with mobile servers, and fractional 0-1 programming - which we review below.
\vspace{-0.15in}
\subsection{Drone-based Response to Medical Emergencies}
\vspace{-0.1075in}
EMSs struggle to provide a timely response to medical emergencies leading to   
patients passing away due to the delayed arrival of ground ambulances.
Medical drones are 
viewed as an efficient solution to assist EMS professionals, including for overdoses. They can quickly deliver medicine to patients and take on-sight imagery to support EMS personnel and operations 
before the arrival of ambulances \cite{PULSIRI19}. 
We review next the literature on the use of drones for medical emergencies. We focus on overdoses, the topic of this study, and cardiac arrests to which most drone-related studies have been devoted in the medical sphere \cite{MERMIRI}. We refer the reader to \cite{211ROSSER} for a recent review on the use of drones in health care. 

\citeauthor{ye2019optdronenetwork} \cite{ye2019optdronenetwork} propose a drone-delivery optimization model for overdoses solved with a genetic algorithm and tested on suspected opioid overdose data from Durham County, NC.
The model reduces the response time to 4 minutes 38 seconds by using four drone bases
 and provides a 64.2\% coverage of the county.
\citeauthor{gao2020dynamic} \cite{gao2020dynamic} propose a Markov decision process model \MLB{that selects which drone should be dispatched to an overdose incident and then relocates the drone.} Confronted with the curse of dimensionality, the authors develop a state aggregation heuristic  to derive a lookup table policy. A simulation based on EMS data from Indiana reveals that the 
state aggregation approach outperforms the myopic policy used in practice, in particular when the overall request intensity gets higher, but also highlights the difficulty to estimate the value function. 
\citeauthor{ornato2020feasibility} \cite{ornato2020feasibility} report that all participants in a pilot feasibility study based on 30 simulated opioid overdoses were able to 
administer the intranasal naloxone medication to the manikin within about two minutes after the 911 call and that 97\% of them felt confident that they could do it successfully in a real event.

We now review the drone literature for out-of-hospital cardiac arrests (OHCA).
Scott and Scott \cite{scott2019models} propose two set covering formulations based on a tandem air-ground  response to OHCAs.
The models set up the location of the depots (where ground vehicles are stationed) and drone bases without taking into account congestion.
Medical supplies are delivered with a ground-based vehicle from a depot to a drone location and the last-mile delivery to the OHCA location is carried out with a drone. 
Pulver et al. \cite{pulver2016locating} determine where to set up drone bases in order to maximize the number of OHCA requests responded to  within one minute. 
This coverage-type facility location model does not concern with the possible congestion of the network and assumes that the demand and the drone response times are deterministic. 
Pulver and Wei \cite{pulver2018optimizing} propose a multi-objective set covering model with backup coverage. Decisions pertaining to 
the positioning of drones and their assignment to OHCAs are taken to maximize the backup and primary coverage of the network. 
Boutilier et al. \cite{boutilier2017optimizing} present a two-step 
approach to design a drone network delivering automated external defibrillators. 
In the first step, a set covering model is solved approximately to determine the number of drone bases needed to reduce the median response time. 
In the second step, an $M/M/K$ queueing model determines heuristically the number of drones at each base. 
The opened stations are fixed (determined in the first stage), and the arrival rate and expected service time 
are defined as fixed parameters determined ex-ante. 
Boutilier and Chan \cite{BOCHAN} propose two coverage-type models that minimize the number of drones needed to satisfy a response time target 
(i.e., decrease in expected value or conditional value-at-risk of response time).
The models do not reward, nor incentivize faster response times. For example, if the target is to decrease the response time by, say, 20 seconds, 
\ML{the models consider a 20-second and an 80-second reduction in the response time as equally satisfactory.}
The queueing models underlying the formulations assume that the service and response times are independent from the utilization of drones. 
Mackle et al. \cite{mackle2020data} present a genetic algorithm to calculate the number of drone bases needed to satisfy a response time threshold. 
Bogle et al. \cite{BogleQALY} use a coverage-type facility location model to determine the number and location of drone stations 
so that a specified number of at-risk OHCA victims are reachable within a certain time.
Cheskes et al. \cite{cheskes2020improving} use simulation to examine the feasibility and  benefits of using drones for OHCAs in rural and remote areas. 

The above models 
assume that the average service time is fixed and independent from the drone assignment decisions.
Additionally, they are all covering models in which the objective function is to minimize the total costs or the number of drones (bases) but does not reward a quicker response. 
Our model contributes to fill in the gaps in the literature. 
Our strategic network model 
(i) is a survival network design model as it minimizes the response time which is the primary driver to increase the survival chance of overdosed patients; and
(ii) represents the service time as a distribution-free decision-dependent random variable with service rate defined endogenously as a function of the drone dispatching~policy.

We explain next how our model differs significantly from the OHCA models proposed in \cite{BOCHAN} which have, as our model, a queueing underlying structure.
First, the objective in \cite{BOCHAN} to minimize the number of DBs is typical in coverage-type facility location models which, \ML{as noted in the literature}, are unable to differentiate the impact of varying response times \cite{ERKUT2008}, or to reward shorter ones and whose suitability for time-critical medical events \ML{has been extensively questioned (see, e.g., \cite{ERKUT2008,janovsikova2021coverage,MILLS}).}
In contrast, our model explicitly rewards 
quick response time and is hence conducive to maximizing the number of opioid overdose incident survivors. As the overdosed patients’ chance of survival rapidly deteriorates with the time-to-treatment, 
it is critical to use an objective function which provides a reward diminishing with the time needed to respond.
The maximization of the  QALY is an alternative objective function that could fulfill the true needs of a medical emergency response network.
Second, the underlying queueing models in \cite{BOCHAN} assume that the service times are exponentially distributed with fixed rates (fixed average service time regardless of drones' placement) and that the response time between any pair of drone bases and OHCAs incidents is fixed ex-ante and independent from drone utilization and network congestion. 
Such assumptions do not allow for an accurate representation of the queueing system as explained in a \ML{2023 study \cite{XU}}. The authors \cite{XU} demonstrate that it is critical to track the actual utilization of facilities and servers so that the known impact of utilization can be accurately reflected on the congestion level, waiting time, and response time. Along the same line, it was previously  shown \cite{MILLS} that assuming that facilities’ workloads are known a priori 
and that each facility can provide the same service level irrelevant of their actual utilization significantly hurts the network’s efficiency and reliability.
We do not resort to such simplifying assumptions and instead model the opioid overdose arrival rate, the service rate, and the average response times as unknowns dependent on the utilization of the resources and the assignment policy, and define them as decision variables whose values are determined endogenously by the model.

\vspace{-0.1205in}
\subsection{Queueing-optimization Models for Stochastic Network Design}
\vspace{-0.085in}
Queueing models can be categorized into two types: descriptive queueing models which conduct an after-the-fact analysis to analyze how a predefined system configuration has performed while optimization-based (prescriptive)  queueing  models deal with congestion and are used for decision-making purposes (e.g., number of servers, location) \cite{MARIANOV1994QueuingProb}. 
We propose a new optimization-based queueing model that belongs to the class of stochastic network design queueing models with congestion  \cite{BERMAN-KRASS2019}. This model class can be further decomposed into two sub-categories depending on whether the servers are fixed (immobile) requesting the customers to travel to the facility, or mobile and travel to the demand location. 
Since we consider the delivery of naloxone with drones, our literature review is focused on mobile servers, which is much less extensive than the one for immobile servers (see, e.g., \cite{ANJOS}). 

The first queueing optimization model with mobile server \cite{berman1985optimal} determines the optimal location of a single facility in order to either maximize the service coverage or to minimize the response cost. Modelling the network as an $M/G/1$ system, the authors consider that, if the server is busy when the demand arrives, the demand can be either rejected (demand is lost and covered by backup servers at a cost) or enter a queue managed under the first-come-first-serve discipline. 
Building on this study, Chiu and Larson \cite{CHIU1985LocANSev} consider a single facility operating as an $M/G/K$ loss queueing system 
and show that, under a mild assumption, the optimal location reduces to a Hakimi median, which minimizes the average travel time to a client. 
Batta and Berman \cite{batta1989location} study an $M/G/K$ system 
and develop an an approximation-based approach to minimize the average response time. 
While the above studies consider a single facility, 
the queueing probabilistic location set-covering model proposed in \cite{MARIANOV1994QueuingProb} considers several facilities, each operating as an $M/M/K$ queueing system. The authors determine the minimum number of facilities needed so that the probability of at least one server being available is greater than or equal to a certain threshold. Later, operating under the auspices of an $M/G/K$ system, Marianov and Revelle \cite{MARIANOV1996QueuingMax} study the probabilistic version of the maximal covering problem 
to site a limited number of emergency vehicles with the goal of maximizing availability when a call arrives. 
Considering an $M/M/K$ system, \citeauthor{Aboolian2008LocAlloc} \cite{Aboolian2008LocAlloc} determine the number of servers needed to minimize 
the setup costs to open facilities and operate servers, the travel costs, and the queueing delay costs. 
In the EMS context, 
\citeauthor{boutilier2017optimizing} \cite{boutilier2017optimizing} propose a decoupled approach to design a drone network for OHCAs. 
Each DB operates as an $M/M/K$ system in which the expected service time and the arrival rate are fixed parameters. 
The coverage-type queueing models in \cite{BOCHAN} minimize the number of drones needed to satisfy a response time target. They assume that the service rates and response times are fixed and independent from the utilization of the drones, their positioning at DBs, and the congestion in the network.
The queueing models in \cite{BOCHAN} extend the one in \cite{MarianovCoverMMd} for fixed servers and in which the objective is to minimize the number of facilities so that the waiting time or queue size does not exceed a set threshold.
An ongoing study \cite{LEMA22} considers an $M/G/1$ queueing system for responding to cardiac arrests and proposes new optimality-based bound tightening techniques.
In contrast to the above health care studies,
this work considers that the service times, the delays, the response times, and the arrival process of (overdose) requests assigned to drones are random variables whose parameters (mean) are endogenized. In particular, the response times and the service rates are decision variables whose values are determined by the model and depend on assignments. 

\vspace{-0.1in}
\subsection{Fractional Nonlinear 0-1 Programming}
\vspace{-0.1in}
As it will be shown in Section \ref{sec_formu}, the optimization problem studied 
is a nonlinear fractional integer problem 
in which the objective function is a ratio of two nonlinear functions each depending on integer variables.
The class of problems closest to ours is that of fractional linear 0/1 (binary) problems for which significant improvements have been made over the last years \cite{BGP16,MGP}. 
Such problems arise in multiple applications, such as chemical engineering, clinical trials, facility location \cite{LIN}, product assortment \cite{BRONT}, and revenue management (see \cite{BGP17} for a review). We also note a few fractional mixed-integer queueing-optimization problems (see, e.g., \cite{ANJOS,ELHEDLI,HAN}. 
Such problems pose serious computational challenges due to the pseudo-convexity and the combinatorial nature of the fractional objective.
They minimize a single or a sum of ratios in which the denominator and numerator are linear functions of binary variables. When more than one ratio term is in the objective function, the problem, even unconstrained, is NP-hard \cite{28}. 

One approach to tackle fractional linear 0-1 programs is to move the fractional terms to the constraint set and to then 
derive MILP reformulations, which 
requires the introduction of continuous auxiliary variables and big-M constraints  (see, e.g., \cite{22}).
\ML{However, MILP solvers have difficulties} -- 
in particular when the number of ratio terms increases -- due to the significant lifting of the decision and constraint spaces and the looseness of the continuous relaxation induced by the big-M constraints. 
Within this family, the MILP reformulation proposed in \cite{BGP16} and based on the binary expansion of the integer-valued expressions in the ratio terms \ML{contains many less bilinear terms and hence less linearization variables and constraints.} 
This formulation scales much better but can generate weak continuous relaxations, which hurts the convergence of the branch-and-bound algorithm. 
Mixed-integer second-order cone reformulations have also been proposed (e.g., \cite{33}). They are based on the submodularity concept and the derivation of extended polymatroid cuts \cite{5} to obtain tighter continuous relaxations. 
However, it is reported \cite{MGP} that state-of-the-art optimization solvers still struggle to solve moderate-size mixed-integer conic problems and that their performance degrades quickly as size increases.
Building upon the links between MILP and mixed-integer conic reformulations, \citeauthor{MGP}  \cite{MGP} 
derive tight convex 
continuous relaxations while limiting the lifting of the decision and constraint spaces.  

The above methods, while providing very valuable insights, can not be directly used to the problem tackled here as the latter differs from the fractional 0-1 problem along several dimensions.
First, the denominator and the numerator of each ratio term are 
nonconvex functions and each involve binary as well as general integer decision variables.  
Second, the objective function is the sum of a very large number (\ML{i.e., several thousands}) of ratio terms. 
Third, the integer variables are multiplied by fractional parameters which prohibits the use of the MILP method (see \cite{BGP16}).
Fourth, the nonlinear terms are not simply bilinear. The numerator of each ratio term includes exponential terms and higher-degree polynomial terms while each denominator includes polynomial and factorial terms.

\vspace{-0.15in}
\section{Drone Network Design Problem for Opioid Overdose Response} 
\label{sec_model}
\vspace{-0.1in}
\subsection{Problem Description and Notations}\label{subsec_description}
\vspace{-0.085in}
The problem studied in this paper is called the Drone Network Design Problem for Opioid Overdose Response (DNDP). 
Consider an EMS provider that seeks to design a drone network to deliver naloxone, i.e., an opioid antagonist, to an opioid overdose incident with the objective to minimize the response time and thereby to maximize the chance of survival of the overdosed patient.  
Given a set of candidate locations (e.g., fire, police, EMS stations), some of them are selected to be set up as DBs where the available medical drones can be deployed. 
The DNDP model is a bilocation-allocation problem that simultaneously determines the locations of the DBs, the capacity of those (i.e., number of drones at each DB), and the response policy determined by the assignment of drones to OTRs in order to minimize the response time. The response time is defined as the sum of the queueing delay time experienced if drones are busy when requested for service and the drone flight time from a DB to an OTR location.
The network is capacitated to account for  the severely limited availability of medical resources. Considering the opioid epidemic, Buckland et al. \cite{BucklandDesignConsider} stress that the amount of medical resources (drones, ambulances, paramedics) is “{\it substantially smaller than the number of patients who require immediate care}”.


The following notational set is used in the formulation of the model (see Appendix \ref{sec:notations}). 
Let $I$ be the set of OTR locations and $J$ be the set of potential DB locations. 
Let $A_j$ be a vector of parameters $(a_j, b_j, c_j)$ representing the coordinates of DB $j$ in the earth-centered, earth-fixed coordinate system while $A_i$ is a vector of parameters $(a_i, b_i, c_i)$ representing the location $i$ of an OTR. 
The parameter $d_{ij} = \Vert A_i - A_j \Vert$ is the euclidean distance between DB $j$ and OTR location $i$.
A number $p$ of drones with speed $v$ can be deployed at up to \ML{$q$ $(q\leq p)$} open DBs across the network.  
Due to battery and autonomy limitations, each drone has a limited coverage defined by its catchment area with radius $r$.  
A drone can only service OTRs that are within its catchment radius \cite{boutilier2017optimizing}, 
since the drone must have 
battery coverage to return to its base. Accordingly, we define the sets $J_i, i \in I$ (resp., $I_j, j \in J$) which include all the DBs (resp.,  OTR locations) that are within $r$ of OTR location $i$ (resp, DB $j$).
The binary decision variable $x_j$ is 1 if a DB is set up at location $j$, and is 0 otherwise. The binary variable $y_{ij}$ takes value 1 if an OTR at $i$ is assigned to DB $j$ and value 0 otherwise. 
The general integer decision variable $K_j$ 
is the number of drones deployed at DB $j$.
By convention, we denote the upper and lower bound of any decision variable $x$ by $\overline{x}$ and $\underline{x}$, respectively.

\vspace{-0.1in}
\subsection{DNDP Model: Collection of $M/G/K_j$ 
Queueing Systems with Unknown~Capacity}\label{subsec_queue}
\vspace{-0.075in}
The stochastic nature of the occurrence of overdoses and the resulting uncertainty about the arrival rates of OTRs at DBs along with the uncertain service times can cause delays, requests being queued, and
waiting times until a drone can be dispatched. 
In order to account for those, we model the network as \ML{a collection of $M/G/K_j$ queues} in which each DB $j$ operates 
as an $M/G/K_j$ queueing system where the capacity $K_j$ of DB $j$ is a bounded general integer decision variable representing the number of drones to be deployed at DB $j$.
The occurrence of opioid overdoses at any location $i$ follows a Poisson distribution with arrival rate $\lambda_i$. 
The service time of drones is a random variable with general distribution and with known first and second moments. 
The arrival rate $\eta_j$ of OTRs at DB $j$ can be inferred to be a Poisson process since it is a linear combination of independent Poisson variables (i.e., weighted sum of assigned OTRs; see \eqref{ARRIVAL}). 
The arrival rate of OTRs at DBs is unknown ex-ante and is defined endogenously via the solution of the optimization problem.
The same applies to the drone service time whose expected value is also endogenized. It follows that the arrival process, i.e., demand at DBs and the service times of drones are endogenous sources of uncertainty with  decision-dependent parameter (expected value) uncertainty as coined in \cite{hellemo2018decision}.
If a drone cannot be dispatched on the spot after reception of an OTR, the request is placed in a queue depleted in a first-come-first-served manner. 
After providing service, drones travel back to the DB to be cleaned, recharged, and prepared for the next trip \cite{ornato2020feasibility}.

We now derive steady-state closed-form expressions for the queueing metrics - response time, queueing delay, service time -- needed to formulate the DNDP~problem. 
We use the notations $S_{j}$, $R_i$, and $Q_j$ to represent the random variables respectively denoting the total service time at DB $j$, the response time for an overdose at location $i$, and the queueing delay at DB~$j$. 

Different from immobile servers, the travel time to and from the scene are included in the service time for mobile servers
\cite{berman2007multiple}. 
The service time for any OTR at $i$ serviced by 
DB $j$ \cite{berman1985optimal} is
$S_{ij} = \beta \frac{d_{ij}}{v} + \alpha_i + \epsilon_i$,
where $\beta$ is a constant that allows for different travel speeds,
and $\alpha_i$ and $\epsilon_i$ are independent and identically distributed (i.i.d) random variables representing the on-scene service time and the drone's reset time (to be charged and prepared for the next demand), respectively. 
Let $\xi_i = \alpha_i + \epsilon_i$,
with expected value $\mathbb{E}[\xi_i]$. 
The expected value $\mathbb{E}[S_{ij}]$  of the service time conditional to DB $j$ responding to an OTR at $i$ is:
\vspace{-0.05in}
\begin{align} \label{e_s_ij}
\mathbb{E}[S_{ij}] & = \beta \frac{d_{ij}}{v} + \mathbb{E}[\xi_i]  \  .
\end{align} 
The expected value of the total response time for $j$ is 
the sum of the expected service times for all the OTRs serviced (i.e., $y_{ij}=1$) by the drones stationed at $j$ and depends on the dispatching variables $y_{ij}$
\vspace{-0.05in}
\begin{align} \label{e_s_j} 
\mathbb{E}[S_j] & = \frac{\sum_{i \in I_j} \lambda_i y_{ij}\mathbb{E}[S_{ij}]}{\sum_{i \in I_j} \lambda_i y_{ij}}  
\end{align}
while the second moment of the total service time at DB $j$ is:

\begin{align} \label{e_s_j_sqr}
\mathbb{E}[S_{j}^2] & = \frac{\sum_{i \in I_j} \lambda_i y_{ij}\mathbb{E}[S_{ij}^2]}{\sum_{i \in I_j} \lambda_i y_{ij}}  \ .
\end{align}

The service time $S_j$ follows a general unspecified distribution with known first and second moments and the opioid overdoses occur at each location $i$ according to a Poisson process with rate $\lambda_i$.
Accordingly, each DB $j$ is modelled as an $M/G/K_j$ queueing system in which a variable and upper bounded number $K_j$ of drones can be stationed and whose expected queueing delay can be approximated 
\cite{nozaki1978approxi, ROSS2014IntroToProb} as
\vspace{-0.06in}
\begin{align} \label{e_q_j}
\mathbb{E}[Q_j] \approx \frac{\eta_j^{K_j} \mathbb{E}[S_j^2] \mathbb{E}[S_j]^{K_j-1}}{2(K_j-1)! (K_j-\eta_j \mathbb{E}[S_j])^2 \big[ \sum_{n = 0}^{K_j-1} \frac{(\eta_j \mathbb{E}[S_j])^n}{n!} + \frac{(\eta_j \mathbb{E}[S_j])^{K_j}}{(K_j - \eta_j \mathbb{E}[S_j]}\big]} 
\end{align}
which represents the expected waiting time until a drone is ready to be dispatched after reception of an OTR. 
The arrival rate of OTRs at DB $j$ is given by
\vspace{-0.06in}
 \begin{equation}
\label{ARRIVAL}
\eta_j = \sum_{i \in I_j} \lambda_i y_{ij} \ , \ j \in J 
\vspace{-0.105in}
\end{equation} 
which shows that the arrival rate is unknown ex-ante and depends on the assignment decisions $y_{ij}$, thereby highlighting the decision-dependent parameter uncertainty of the OTR arrivals at DBs. 

The expected response time \ML{in steady-state} for an OTR at location $i$ is:
\vspace{-0.073in}
\begin{align} \label{e_r_i}
\mathbb{E}[R_i] = \sum_{j \in J_i} \left(\mathbb{E}[Q_j] + \frac{d_{ij}}{v}\right)y_{ij} \ .
\vspace{-0.107in}
\end{align}
\vspace{-0.245in}
\MLB{
\begin{rem}
Note that the response and service time differ in the healthcare context. For example, Bandara et al. \cite{Bandara:2014} define the response time as “{\it the time between the receipt of a call at the dispatch centre and the arrival of the first emergency response vehicle at the scene}" while their definition of the service time is “{\it the time required for an ambulance to return back to its original station after leaving the station to attend the call, which includes the transportation time of the patient to the hospital if needed}”.
\end{rem}
}
\vspace{-0.105in}
\noindent
The metric used in our model is the average (over all overdoses) of the expected response time.~We~denote it by $\bar{R}$ and refer to it as the average response time. The proof of Proposition \ref{thm_avg_resp} is in Appendix \ref{AVE-PR1}.
\vspace{-0.05in}
\begin{prop}\label{thm_avg_resp}
The functional form of the average response time is a fractional expression with nonlinear numerator and denominator:
\vspace{-0.06in}
\begin{equation}
\bar{R} \approx \sum_{i \in I} \sum_{j \in J_i} \left[ \frac{\eta_j^{K_j} \mathbb{E}[S_j^2] \mathbb{E}[S_j]^{K_j-1}}{2(K_j-1)! (K_j-\eta_j \mathbb{E}[S_j])^2 \big[ \sum_{n = 0}^{K_j-1} \frac{(\eta_j \mathbb{E}[S_j])^n}{n!} + \frac{(\eta_j \mathbb{E}[S_j])^{K_j}}{(K_j - 1)!(K_j - \eta_j \mathbb{E}[S_j]}\big]} + \frac{d_{ij}}{v}\right]\frac{\lambda_i y_{ij}}{\sum_{l \in I} \lambda_l} \ .
\end{equation}
\end{prop}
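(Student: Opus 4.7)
The plan is to derive the stated formula by directly combining the definition of the average response time with the closed-form expressions \eqref{e_r_i} for the per-location expected response time and \eqref{e_q_j} for the expected queueing delay at each DB. Since the approximation in \eqref{e_q_j} is the only non-exact step, any inequality or approximation in $\bar R$ will inherit only from that substitution, which justifies writing the final statement with an ``$\approx$'' rather than an equality.

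First, I would define $\bar R$ precisely as the expected response time averaged over the spatial distribution of opioid overdose events. Because overdoses at each location $i\in I$ occur according to independent Poisson processes with rates $\lambda_i$, by the thinning/superposition properties of Poisson processes the unconditional probability that a randomly sampled incident originates at $i$ is $\lambda_i/\sum_{l\in I}\lambda_l$. Therefore
\begin{equation*}
\bar R \;=\; \sum_{i\in I}\frac{\lambda_i}{\sum_{l\in I}\lambda_l}\,\mathbb{E}[R_i].
\end{equation*}
This step fixes the weights $\lambda_i/\sum_{l\in I}\lambda_l$ that appear in the target expression and explains their interpretation as overdose-location probabilities.

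Second, I would substitute the expression \eqref{e_r_i} for $\mathbb{E}[R_i]$, interchanging the finite sums to obtain
\begin{equation*}
\bar R \;=\; \sum_{i\in I}\sum_{j\in J_i}\bigl(\mathbb{E}[Q_j]+d_{ij}/v\bigr)\,y_{ij}\,\frac{\lambda_i}{\sum_{l\in I}\lambda_l}.
\end{equation*}
The term $d_{ij}/v$ in this expression matches verbatim the flight-time term in the target formula, so the only remaining task is to replace $\mathbb{E}[Q_j]$ by the approximation from \eqref{e_q_j}. This substitution is purely mechanical: it injects the fractional factorial/polynomial/exponential block exactly as it appears inside the square brackets of the proposition, with $\eta_j=\sum_{i\in I_j}\lambda_i y_{ij}$ from \eqref{ARRIVAL} and with $\mathbb{E}[S_j],\mathbb{E}[S_j^2]$ taken from \eqref{e_s_j}--\eqref{e_s_j_sqr}.

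The main (and essentially the only) obstacle is bookkeeping rather than mathematics: making sure the weights $\lambda_i/\sum_l \lambda_l$ and the assignment variables $y_{ij}$ multiply the correct bracketed term, and being careful about the exponent conventions and the $(K_j-1)!$ factor in the Erlang-like sum of \eqref{e_q_j}. Once these are aligned, the claimed fractional form of $\bar R$ with a nonlinear numerator (products of $\eta_j^{K_j}$, $\mathbb{E}[S_j^2]$, $\mathbb{E}[S_j]^{K_j-1}$, $\lambda_i$, $y_{ij}$) and a nonlinear denominator (factorial, polynomial, and exponential terms in $K_j$ and $\eta_j\mathbb{E}[S_j]$) follows immediately, completing the proof of Proposition \ref{thm_avg_resp}.
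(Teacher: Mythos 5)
Your proposal is correct and follows essentially the same route as the paper's own proof: write $\bar R$ as the $\lambda_i/\sum_{l}\lambda_l$-weighted average of the $\mathbb{E}[R_i]$, substitute \eqref{e_r_i}, and then expand $\mathbb{E}[Q_j]$ via \eqref{e_q_j}. Your additional remarks on the Poisson-superposition interpretation of the weights and on the $\approx$ being inherited solely from the queueing-delay approximation are consistent with, and slightly more explicit than, what the paper states.
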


\vspace{-0.2in}
\subsection{DNDP Base Model: Fractional Nonlinear Integer Programming Problem}\label{sec_formu}
\vspace{-0.1in}
We present now the base formulation of the DNDP problem, which belongs to the family of nonlinear fractional integer problems, and analyze its complexity. 
Before presenting its formulation, we recall the most distinctive features of the proposed DNDP model.
First, the minimization of the average response time is a survival objective function as it contributes to increasing the survival chance of the victims of opioid overdoses. 
The chance of survival is a monotone decreasing function of the response time (see, e.g., \cite{Bandara:2014,DEMAIO}). By minimizing response time and rewarding quick care, our model effectively maximizes  
the chance of survival of overdosed victims, which is viewed in the health care literature 
as a survival optimization model. Second, the queueing-optimization model accounts for congestion and decision-dependent uncertainty as the service and arrival rates characterizing the Poisson arrival process and service times of OTRs at DBs are determined endogenously. Third, the capacity -- number of drones positioned of each $M/G/K_j$ DB $j$ is not fixed ex-ante, but is a bounded decision variable.

The fractional nonlinear integer base formulation $\mathbf{B-IFP}$ of problem DNDP is:
\begin{subequations}\label{M-BF}
\begin{align}
\mathbf{B-IFP:}
\min  & \; \frac{1}{\sum_{l \in I}\lambda_l} 
\Bigg[
\sum_{i \in I} \sum_{j \in J_i} \frac{\lambda_i d_{ij}y_{ij}}{v} +
\notag \\
&\hspace{-2.6cm} \sum_{i \in I} \sum_{j \in J_i}   \frac{\lambda_i y_{ij}\sum_{l \in I_j} (\lambda_l y_{lj}\mathbb{E}[S_{lj}^2]) 
(\sum_{l \in I_j} \lambda_l y_{lj}
\mathbb{E}[S_{lj}])^{K_j-1}} {2(K_j-1)! (K_j- \sum_{l \in I_j} \lambda_l y_{lj}\mathbb{E}[S_{lj}]  )^2 \big[ \sum_{n = 0}^{K_j-1} \frac{\sum_{l \in I_j} \lambda_l y_{lj} \mathbb{E}[S_{lj}])^n}{n!} 
+ \frac{(\sum_{l \in I_j} \lambda_l y_{lj}\mathbb{E}[S_{lj}])^{K_j}}
{(K_j - 1)!(K_j - \sum_{l \in I_j} \lambda_l y_{lj} \mathbb{E}[S_{lj}])}\big]} \Bigg] \label{D1_obj} \\
s.to \ \ &	\sum_{i \in I_j} \lambda_i y_{ij}   < K_j \frac{\sum_{i \in I_j} \lambda_i y_{ij}}{ \sum_{i \in I_j} \lambda_i y_{ij}\mathbb{E}[S_{ij}]}, \ \ j \in J \label{steady-state}\\
& \sum_{j \in J_i} y_{ij} = 1, \ \  i  \in I \label{assignment}\\
&	y_{ij} \le x_j \ \  i \in I, j \in J_i \label{eq_lim}\\
&	\sum_{j \in J} x_j \le q \label{eq_q}\\
&x_j \le K_j \le M x_j, \ j \in J \label{open_drone}\\
& \sum_{j \in J} K_j = p \label{eq_p}\\
&x_j, y_{ij} \in \{0,1\}, \ \ i \in I_j, j \in J \label{binary} \\
& K_j \in \mathbb{Z}_+, \ \  j \in J \label{k_j_integer} 
\end{align}
\end{subequations}
\vspace{-0.285in}

The objective function \eqref{D1_obj} minimizes the average response time of the network, a key feature given the time-critical nature of opioid overdoses. 
Given that the queueing formulas used to represent the response time 
assume that the system is in steady-state, we introduce the nonlinear constraints \eqref{steady-state} to ensure the 
stability of the queueing system \cite{medhi2002stochastic} as they prevent the arrival rate at any DB from being equal or larger than the service rate.
Constraint \eqref{assignment} makes sure that  each OTR is serviced by exactly one drone while \eqref{eq_lim} requires that an OTR can only be  assigned to an open DB within the drone catchment area. 
Constraint \eqref{eq_q} limits from above the number of open DBs with the parameter $q$. 
Constraint \eqref{open_drone} ensures that drones can only be placed at an opened DB and that an opened DB must have at least one drone. The parameter $M$ defines the maximum number
of drones that can be stationed at any DB $j$.
As suggested in \cite{MOSHREF} and as implemented in \cite{BAUER,pulver2018optimizing, gao2020dynamic} for medical drone networks, we set $M$ equal to two as drones are scarce resources and are typically spread to expand the coverage of the network. Additionally, our preliminary tests based on real-life data show that giving the possibility to place a third drone at a DB does not reduce the response times.
Constraint \eqref{eq_p} ensures that all available drones are deployed. 
The binary and general integer nature of the decision variables are enforced by  \eqref{binary} 
and \eqref{k_j_integer} with $\mathbb{Z}_+$ denoting the set of nonnegative integer numbers.
Remark \ref{PROP1} summarizes several key features of the model.
\vspace{-0.0175in}
\begin{rem} \label{PROP1}
Problem $\mathbf{B-IFP}$ is a \ML{nonconvex} 
optimization problem in which:
(i) The objective function is neither convex, nor concave: the denominator and the numerator in each ratio term of the objective function are nonconvex functions; 
(ii) The ratio terms can involve division by 0 and be indeterminate;
(iii) Any ratio term related to a 
location where no DB is set up in undefined;
(iv) There is a mix of binary and bounded general integer variables;
(v) The continuous relaxation of $\mathbf{B-IFP}$ is a nonconvex~problem. 
\end{rem}
\vspace{-0.05in}
The validity of the above statements is demonstrated in Appendix \ref{REM1}.
We also propose in Appendix \ref{APP-ILLU} an illustration for a small network and present the formulation of the objective function in $\mathbf{B-IFP}$.  

\vspace{-0.11in}
\section{Reformulation Framework} \label{sec_reformulation}
\vspace{-0.125in}
Remark \ref{PROP1} highlights that the base formulation $\mathbf{B-IFP}$ is a fractional nonlinear integer problem 
which is extremely difficult to solve numerically even for small problem instances. 
We derive an equivalent and computationally tractable MILP reformulation. 
We first demonstrate in Theorem \ref{T2} 
that an MILP reformulation can be derived when the number of servers is two or less as relevant 
 to the medical drone network problem (see \cite{BAUER,MOSHREF,pulver2018optimizing}) 
 before generalizing this result 
 (Theorem \ref{TH-GEN}) to any $M/G/K_j$ queueing system with variable number of servers $K_j$ at each $j$.
By convention, a superscripted index inside parentheses $(m)$ refers to the  $m^{th}$ component of a vector (e.g., $\gamma^{(m)}$ refers to the $m^{th}$ component~of~$\gamma$).
 
\vspace{-0.035in}
\ML{\subsection{MILP Reformulations} \label{MINLP-REF}}
\vspace{-0.075in}
The derivation of an MILP reformulation is relatively complex and we split it into two main steps (see Proposition \ref{T1} \MLB{and Theorem \ref{T2})} 
to ease the exposition. 

Proposition \ref{T1} proposes an equivalent reformulation 
taking the form of a fractional nonlinear binary problem {\bf R-BFP} with nonconvex continuous relaxation.   
\MLB{The proof is given in Appendix \ref{ATH40}.}

\vspace{-0.1in}
\begin{prop} \label{T1}
Let $\gamma_{j}^{\WMR{(m)}} \in \{0, 1\},  j \in J, m=1,\ldots,M$.  
The fractional nonlinear binary~problem 
\vspace{-0.075in}
\begin{subequations}\label{F-R-BFP}
\begin{align}
\mathbf{R-BFP}: & \; \min	    
\sum_{i \in I} \sum_{j \in J_i} \frac{y_{ij}d_{ij} \lambda_i}{v \sum_{l \in I} \lambda_l} \; + \; \sum_{i \in I} \sum_{j \in J_i} \sum_{m=1}^{M} \ \frac{y_{ij}\lambda_i}{\sum_{l \in I} \lambda_l} 
\label{OBJ2} \\ 
& \hspace{-0.8in} 
\left[ \frac{\gamma_j^{\WMR{(m)}} \sum_{l \in I_j} \lambda_l y_{lj} \mathbb{E}[S_{lj}^2] (\sum_{l \in I_j} \lambda_l y_{lj} \mathbb{E}[S_{lj}])^{m-1}}{2(m-1)! (m-\sum_{l \in I_j} \lambda_l y_{lj} \mathbb{E}[S_{lj}])^2 \big[ \sum_{n = 0}^{m-1} \frac{(\sum_{l \in I_j} \lambda_l y_{lj} \mathbb{E}[S_{lj}])^n}{n!} + \frac{(\sum_{l \in I_j} \lambda_l y_{lj} \mathbb{E}[S_{lj}])^{m}}{(m - 1)!(m - \sum_{l \in I_j} \lambda_l y_{lj} \mathbb{E}[S_{lj}]}\big]} \right] \notag\\ 
\text{s.to} \; & \eqref{assignment}- \eqref{eq_q} ; \eqref{binary} \notag \\ 
& \sum_{i \in I_j} \lambda_i y_{ij}\mathbb{E}[S_{ij}]   \le \sum_{m=1}^{M} m  \gamma_j^{\WMR{(m)}} - \epsilon , \ \ j \in J \label{steady-state-2}\\
& x_j \le \sum\limits_{m=1}^{M}  m  \gamma_j^{\WMR{(m)}} \le M x_j, \ j \in J \label{open_drone-2}\\
& \sum\limits_{j \in J}\sum\limits_{m=1}^{M} m\gamma_j^{\WMR{(m)}} = p \label{eq_p-2}\\
&\sum\limits_{m=1}^{M} \gamma_j^{\WMR{(m)}} \leq 1 \ , \ j \in J  \label{NEW1} \\
& \gamma_j^{\WMR{(m)}} \in \{0,1\}, \ \ j \in J, m =1, \ldots, M \label{binary2} 
\end{align}
\end{subequations}
is equivalent to the fractional nonlinear integer problem  $\mathbf{B-IFP}$. 
\vspace{-0.105in}
\end{prop}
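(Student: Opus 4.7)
The plan is to establish a value-preserving bijection between feasible solutions of $\mathbf{B-IFP}$ and $\mathbf{R-BFP}$ by encoding the bounded general integer variable $K_j \in \{0, 1, \ldots, M\}$ through the family of binary indicators $\{\gamma_j^m\}_{m=1}^M$. The correspondence is $K_j = \sum_{m=1}^{M} m\,\gamma_j^m$ together with the SOS1-type condition \eqref{NEW1} that forces at most one $\gamma_j^m$ to be nonzero. Under this encoding, $K_j = m^\star \geq 1$ is represented by $\gamma_j^{m^\star} = 1$ (with all other $\gamma_j^m = 0$), while $K_j = 0$ corresponds to $\gamma_j^m = 0$ for every $m$.

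First I would verify that the bijection preserves feasibility constraint by constraint. Since \eqref{assignment}--\eqref{eq_q} and \eqref{binary} are inherited verbatim, the work reduces to \eqref{steady-state}, \eqref{open_drone}, \eqref{eq_p}, and \eqref{k_j_integer}. Substituting the representation of $K_j$ into \eqref{open_drone} and \eqref{eq_p} yields directly \eqref{open_drone-2} and \eqref{eq_p-2}. Integrality of $K_j$ in \eqref{k_j_integer} is automatic since it is a nonnegative integer combination of binary variables. For \eqref{steady-state}, dividing both sides by $\sum_{i \in I_j} \lambda_i y_{ij}$ (or noting both sides vanish together when this sum is zero) gives the equivalent form $\sum_{i \in I_j} \lambda_i y_{ij}\,\mathbb{E}[S_{ij}] < K_j$, which becomes \eqref{steady-state-2} after replacing $K_j$ by $\sum_m m\,\gamma_j^m$ and converting the strict inequality into a non-strict one through a sufficiently small $\epsilon > 0$.

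Next I would argue that the two objective functions coincide. Fix $j$ and observe that \eqref{NEW1} ensures at most one $m$ contributes to the inner sum $\sum_{m=1}^M$; when $K_j = m^\star \geq 1$, only the $m = m^\star$ term survives, and this term is, by direct inspection, exactly the fractional expression appearing in \eqref{D1_obj} for that $j$, after using \eqref{ARRIVAL} to identify $\eta_j$ with $\sum_{l \in I_j} \lambda_l y_{lj}$ and substituting $K_j = m^\star$. When $K_j = 0$, all $\gamma_j^m$ vanish, so the inner sum is zero; simultaneously \eqref{open_drone} forces $x_j = 0$, which via \eqref{eq_lim} forces $y_{ij} = 0$ for every $i \in I_j$, so the corresponding summand in \eqref{D1_obj} carries the multiplicative factor $\sum_{l \in I_j} \lambda_l y_{lj} = 0$ and is interpreted as zero under the standard convention for unopened DBs. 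Term-by-term matching then gives equality of the two objective values at every feasible solution.

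The principal obstacles I anticipate are two. First, the conversion of the strict inequality \eqref{steady-state} into the non-strict form \eqref{steady-state-2} requires choosing $\epsilon$ strictly smaller than the minimum positive slack $\min\{m - \sum_{i \in I_j} \lambda_i y_{ij}\mathbb{E}[S_{ij}] : j \in J,\, m \in \{1,\ldots,M\},\, \text{feasible}\}$; this minimum is attained on a finite set since $y$ ranges over $\{0,1\}$-values and hence a valid $\epsilon$ exists. Second, for indices $m \neq K_j$, the denominator in the corresponding summand of \eqref{OBJ2} can be zero or negative in isolation, so one must be explicit that these ratios are interpreted as zero because their numerator is multiplied by $\gamma_j^m = 0$; only the single ``active'' index $m = K_j$ is genuinely evaluated, and for that index the steady-state constraint \eqref{steady-state-2} guarantees a strictly positive denominator, so the expression is well defined wherever it matters.
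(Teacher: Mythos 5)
Your proposal is correct and follows essentially the same route as the paper's proof: encoding $K_j=\sum_{m=1}^{M}m\,\gamma_j^m$ under the SOS1 condition \eqref{NEW1}, substituting this into \eqref{steady-state}, \eqref{open_drone}, \eqref{eq_p} to obtain \eqref{steady-state-2}--\eqref{eq_p-2}, and matching the objectives term by term via the case split $K_j=0$ versus $K_j=m^\star\geq 1$. Your added care about choosing $\epsilon$ below the minimum positive slack and about interpreting the inactive ratio terms (whose denominators may vanish when $m\neq K_j$) makes explicit two points the paper's proof passes over, but does not change the argument.
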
	
The following comments are worth noting.
A first difference with $\mathbf{B-IFP}$ is that $\mathbf{R-BFP}$ has only binary decision variables and does not include any general integer variables $K_j$. 
The second difference and advantage of $\mathbf{R-BFP}$ over $\mathbf{B-IFP}$ is that the polynomial terms in the reformulation $\mathbf{R-BFP}$ are of lower degree than those in $\mathbf{B-IFP}$.
Third, in $\mathbf{R-BFP}$, there is no decision variable $K_j$ appearing in the upper limits of the summation operations, such as 
$ \sum_{n = 0}^{K_j-1} (\sum_{l \in I_j} \lambda_l y_{lj} \mathbb{E}[S_{lj}])^n$ in $\mathbf{B-IFP}$.
Fourth, there is no exponential term of form $y_{ij}^{K_j}$ in $\mathbf{R-BFP}$.
Fifth, the factorial terms $(K_j-1)!$ in $\mathbf{B-IFP}$ which, besides being nonlinear, can also be undefined if $K_j=0$, are no longer present in $\mathbf{R-BFP}$ in which they are replaced by the fixed parameters $(m-1)!$. The undefined issue is resolved since $m$ is at least equal to 1. The reformulation $\mathbf{R-BFP}$ is valid for any value assigned to $M$.

The constraints 
\eqref{assignment}-\eqref{eq_q}, \eqref{binary}, and 
\eqref{steady-state-2}-\eqref{binary2} representing the feasible area of problem \textbf{R-BFP} define collectively a linear binary feasible set which, to ease the notation, is thereafter referred to as:
\vspace{-0.025in}
\begin{equation}
\label{FEASIBLESET2}
\mathcal{B} = \left\{(x,y,\gamma)  \in \{0,1\}^{|J|+\sum_{i\in I} |J_i|+M|J|} : \eqref{assignment}-\eqref{eq_q}; \eqref{binary};\eqref{steady-state-2}-\eqref{binary2}\right\} \ .
\vspace{-0.025in}
\end{equation}
We give in Appendix  \ref{APP-ILLU} the objective function of problem $\mathbf{R-BFP}$ for a small network.
Table \ref{T01} in Appendix \ref{SIZE} specifies the number of constraints and integer decision variables of each type in the base formulation $\mathbf{B-IFP}$ and its reformulation $\mathbf{R-BFP}$. 

Although simpler than $\mathbf{B-IFP}$, $\mathbf{R-BFP}$ is still a challenging optimization problem as it involves polynomial and fractional terms 
and its continuous relaxation is nonconvex. 
We shall now demonstrate in Theorem \ref{T2} that the MILP problem $\mathbf{R-MILP}$ is equivalent to $\mathbf{R-BFP}$ and $\mathbf{B-IFP}$. 
The proof is split into two main steps that starts with the moving of the fractional terms from the objective function into the constraint set and continues with the linearization of the polynomial terms. 
To shorten the notations in the proof, we replace $\mathbf{E}[S_{lj}]$ and $\mathbf{E}[S_{lj}^{2}]$  by $\tilde{S}_{lj}$ and $\tilde{S}^{2}_{lj}$, respectively.
We first consider the medical drone network design problem DNDP in which $M=2$ (i.e., up to two servers per DB).  
Theorem \ref{TH-GEN} will later show that an MILP reformulation can be derived for any value of the parameter $M$, or, in other words, for any $M/G/K$ queueing system in which the capacity of each system is unknown and variable.

We recall two well-known linearization methods 
for bilinear terms used in the proof of Theorem \ref{T2}:
\vspace{-0.1025in}
\begin{itemize}
\item Product of binary variables \cite{FORTET} : \\
The polynomial set 
$\{(y_1,y_2,z) \in \{0,1\}^2 \times [0,1]: z = y_1 y_2 \}$ 
is equivalent to the MILP set: 
\vspace{-0.05in}
\begin{equation}
\label{FOR}
\mathcal{F}_y^z=\Big\{ (y_1,y_2,z): 
z \geq 0, \;
z \geq y_1+y_2 - 1, \; 
z \leq  y_i, \;  i=1,2   \Big\} \ .
\vspace{-0.1205in}
\end{equation}
\item Product of a bounded continuous variable by a binary one  \cite{mccormick1976computability}: \\
Let $0 \leq \underline{x} < \bar{x}$.
The polynomial set 
$\{(y,x,z) \in \{0,1\} \times [\underline{x},\bar{x}] \times [0,\bar{x}]: z =yx \}$ 
can be equivalently represented with the MILP set: \vspace{-0.075in}
\begin{equation}
\label{R-FOR}
\mathcal{M}_{yx}^z=\Big\{ (y,x,z): 
z \geq \underline{x} y, \;
z \geq \bar{x}(y-1)+x, \; 
z \leq  \bar{x}y, \: z \leq \underline{x}(y-1)+x   \Big\} \ .
\end{equation}

\end{itemize}
\vspace{-0.25in}
\begin{thm} \label{T2}
Define the index sets 
$D_j = \{(l,t): l,t \in I_j, l <t \}, j\in J$. 
Let 
$z_{jlt} \in [0,1]$,
$\mu^{(m)}_{ij} \in [0,\bar{U}_j^{(m)}]$, 
$\tau_{jlt} \in [0,\bar{U}^{(2)}_j]$, and
$\omega^{(m)}_{ij} \in [0,\bar{\mu}_{ij}^{(m)}]$
$i,l,t \in I_j, (l,t) \in D_j, j\in J, m=1,\ldots,M$
be continuous auxiliary variables used for the linearizization of bilinear terms.
The MILP problem $\mathbf{R-MILP}$ 

\begin{subequations}
\label{RDN3}
\begin{align}
\min & \ \sum_{i \in I} \sum_{j \in J_i} \frac{y_{ij}d_{ij} \lambda_i}{v \sum_{l \in I} \lambda_l} 
+  \sum_{i \in I} \sum_{j \in J_i} 
\Bigg[ \frac{\omega_{ij}^{\WMR{(1)}}}{2} + \frac{\omega_{ij}^{\WMR{(2)}}}{2} \Bigg] 
\frac{\lambda_i}{\sum_{l \in I} \lambda_l} & \label{obj_lin} \\ 
\text{s.to} \ & (x,y,\gamma) \in \mathcal{B} & \label{COM1} \\ 
& U_j^{(1)} = \sum_{l \in I_j}\lambda_l \mu^{(1)}_{lj} \tilde{S}_{lj} + \sum_{l \in I_j}\lambda_l y_{lj} \tilde{S}_{lj}^{2} & j \in J  \label{U}  \\
&4U^{(2)}_{j} = 
\sum_{l \in I_j} \lambda_l^2 \tilde{S}_{lj} (\mu^{(2)}_{lj}\tilde{S}_{lj} + y_{lj}  \tilde{S}_{lj}^2) 
+ 2 \sum_{(l,t) \in D_j} \lambda_l \lambda_t \tilde{S}_{tj} (z_{jlt} \tilde{S}_{lj}^{2} +  \tau_{jlt}^{}\tilde{S}_{lj}) & j \in J \label{V} \\
&(y_{lj},y_{tj},z_{jlt}) \in \mathcal{F}_y^z & (l,t) \in D_j, j \in J \label{COM2} \\ 
&(y_{lj},U^{(m)}_j,\mu^{(m)}_{lj}) \in  \mathcal{M}_{yU}^{\mu} & \hspace{-2cm} l \in I_j, j \in J, {m}=1,\ldots,M \label{COM3}\\ 
&(z_{jlt},U^{(2)}_j,\tau_{jlt}) \in \mathcal{M}_{zU}^{\tau}& (l,t) \in D_j, j \in J  \label{COM4} \\ 
&\MLB{(\gamma^{(1)}_j,\mu^{(1)}_{ij}, \omega^{(1)}_{ij}) \in  \mathcal{M}_{\ \gamma\mu}^{' \omega}}& \MLB{i \in I, j \in J_i} \label{COMPACT}
\end{align}
\end{subequations}
is equivalent to the nonlinear integer problems $\mathbf{B-IFP}$ and 
$\mathbf{R-BFP}$ for $M$=2.
\end{thm}
\vspace{-0.1in}
\MLB{
The proof of Theorem \ref{T2} is given in Appendix \ref{ATH41} and is decomposed into two main parts. The linearization part first demonstrates that the nonlinear problems {\bf B-IFP} and {\bf R-BFP} are MILP-representable. The compaction part shows that the number of linearization constraints and variables can be significantly reduced.} 
\MLB{
This result is attained by first showing that 
$\omega^{(2)}_{ij} = 
\mu^{(2)}_{ij} =  \mu^{(2)}_{ij} \cdot \gamma^{(2)}_j, i \in I, j \in J_i$, which is in turn used to prove that the two sets of linear inequalities 
$(\gamma^{(1)}_j,\mu^{(1)}_{ij},\omega^{(1)}_{ij}) \in  \mathcal{M}_{\ \gamma\mu}^{' \omega}, \ i \in I, j \in J_i 
$ \eqref{COMPACT} and
\vspace{-0.075in}
\begin{equation}
\label{OLD-R-MILP}
(\gamma^{(m)}_j,\mu^{(m)}_{ij},\omega^{(m)}_{ij}) \in  \mathcal{M}_{\gamma\mu}^{\omega} \; , \; i \in I, j \in J_i, m=1,\ldots,M 
\vspace{-0.075in}
\end{equation}
are equivalent 
(i.e., the superscript in \eqref{OLD-R-MILP} is $(m), m=1,\ldots,m$ while it is $(1)$ in \eqref{COMPACT}).
In Section \ref{sub_sec_compute_e}, we assess the computational benefits of the compaction approach by comparing the results with the compact MILP reformulation {\bf R-MILP} and the larger-size MILP reformulation {\bf L-MILP} 
\vspace{-0.1in}
\begin{equation}
\label{LARGE}  
{\bf L-MILP}: \max \eqref{obj_lin}: \eqref{COM1}-\eqref{COM4} ; \eqref{OLD-R-MILP}
\vspace{-0.1in}
\end{equation}
in which \eqref{OLD-R-MILP} replaces  \eqref{COMPACT} in {\bf R-MILP}.
Observe that the MILP reformulation {\bf L-MILP} contains $\sum_{i\in I} |J_i|$ 
more decision variables and $4 \ \sum_{i\in I} |J_i|$ more linearization constraints than {\bf R-MILP}.
}

\noindent
In Appendix \ref{APP-ILLU}, we give the objective formulation of problem $\mathbf{R-MILP}$ for a small network.   

\vspace{-0.075in}
\ML{\subsection{Generalization of Reformulation Framework} \label{GENE-REF}}
\vspace{-0.05in}
We shall now demonstrate in Theorem \ref{TH-GEN} that the linearization approach proposed above for an $M/G/K$ queueing system with variable number of servers (capacity) $K$ limited from above to 2 can be extended to any $M/G/K$ system with finite variable capacity $K$. 
\vspace{0.025in}
\begin{thm}
\label{TH-GEN}
The stochastic network design model of form $\mathbf{R-BFP}$ that minimizes the average response time of a \ML{collection} of $M/G/K_j$ queueing systems with variable and finitely bounded number of servers $K_j$ can be reformulated as
\ML{
\begin{subequations}
		\label{RDN4}
		\begin{align}
			\min & \ \sum_{i \in I} \sum_{j \in J_i} \frac{y_{ij}d_{ij} \lambda_i}{v \sum_{l \in I} \lambda_l} 
			+  \sum_{i \in I} \sum_{j \in J_i} \sum_{m=1}^{M} \ \frac{y_{ij}\lambda_i V_j^{\WMR{(m)}}}{\sum_{l \in I} \lambda_l} \\ 
			\text{s.to} \ 
			& (x,y,\gamma) \in \mathcal{B} & \notag   \\ 
			& \underbrace{V_j^{\WMR{(m)}} 
				(m-1)! (m-\sum_{l \in I_j} \lambda_l y_{lj} \tilde{S}_{lj})^2
				\sum_{n = 0}^{m-1} \frac{(\sum_{l \in I_j} \lambda_l y_{lj} \tilde{S}_{lj})^n}{n!}}_{T1} 
			+ \underbrace{V_j^{\WMR{(m)}}(m - \sum_{l \in I_j} \lambda_l y_{lj} \tilde{S}_{lj}) 
				(\sum_{l \in I_j} \lambda_l y_{lj} \tilde{S}_{lj})^m}_{T2}
			\notag \\
			= \ & \underbrace{1/2 \ \gamma_j^{\WMR{(m)}} \sum_{l \in I_j} \lambda_l y_{lj} \tilde{S}_{lj}^2 (\sum_{l \in I_j} \lambda_l y_{lj} \tilde{S}_{lj} )^{m-1}}_{T3} \; ,  \; j\in J, \ML{m=1,\ldots,M} \label{SUBST1}
		\end{align}
\end{subequations}
}
and is always MILP-representable.
\end{thm}
\vspace{-0.05in}
The proof of Theorem \ref{TH-GEN} is given in Appendix \ref{ATH4} and shows that the objective function and each constraint \eqref{FT2} can be linearized regardless of the value of $M$.
\MLB{
Note that T1, T2, and T3 in \eqref{SUBST1} are polynomial expressions. 
More precisely, T1 and T2  involve polynomials of degree $(m+2)$ with monomials involving the product of one single continuous bounded variable by up to $(m+1)$ binary variables while T3 includes polynomials of degree $(m+1)$ with monomials defined as products of $(m+1)$ binary variables.
Additionally, the polynomials in T1, T2, and T3 have a {\it nested} structure (see, e.g., \cite{CRAMA,FISCHER}) for which customized valid inequalities could be derived to strengthen the formulation.
}

\vspace{-0.1275in}
\section{Outer Approximation Algorithmic Framework} \label{sec_ALGO}
\vspace{-0.08in}
\MLB{Even though \textbf{R-MILP} is an MILP problem}, its
solution remains nonetheless a challenge 
owing to \MLB{its} combinatorial nature and size.
Linearization methods suffer from two possible drawbacks. First, they require the introduction of many decision variables and constraints and second the resulting continuous relaxations can be very loose (i.e., some of the added constraints are big-M ones).
To palliate these issues, we use the concepts of lazy constraints, valid inequalities, and optimality cuts to devise two outer~approximation methods whose efficiency and scalability are demonstrated in Section \ref{sub_sec_compute_e}.

While instances of a special variant of model \MLB{\textbf{R-MILP}} 
when the maximum number $M$ of drones at each DB is fixed and equal to 1 (i.e., each open DB is an $M/G/1$ system) can be solved, preliminary experiments reveal that  state-of-the-art solvers are however unable to solve, within 1 hour, the {\it root node} of the continuous relaxation of moderate-sized problem instances for $M\geq 2$.
To overcome this issue, we derive MILP relaxations for problems $\mathbf{R-MILP}$ using {\it lazy constraints} (see, e.g., \cite{kleinert2021,lundell2019}) and embed them in an outer approximation algorithm. The motivation is to alleviate the issue caused by the lifted decision and constraint spaces due to the linearization method. 
Supplementing the outer approximation approach via the derivation of valid inequalities and optimality cuts,
we obtain an outer approximation branch-and-cut algorithm {\tt OA-B\&C}, \MLB{which we describe next}. 

\vspace{-0.075in}
\subsection{Outer Approximation with Lazy Constraints}
\label{sub_sec_lazy_c}
\vspace{-0.05in}

A lazy constraint is an integral part of the actual constraint set 
and is unlikely to be binding at the optimum.
Instead of incorporating all lazy constraints in the formulation,
they are grouped in a pool and are at first removed
from the constraint set before being (possibly) iteratively and selectively reinstated on an as-needed basis. 
The targeted benefit is to obtain a  reduced-size relaxation or outer approximation problem, which is quicker to solve. 
One should err on the side of caution when deciding which constraints are set up as lazy.
Indeed, the inspection of whether a lazy constraint is violated is carried out each time a new incumbent solution for the outer approximation problem is found and the computational overheads consecutive to the reintroduction of violated lazy constraints
can be significant.

Within this approach, the reduced-size relaxation 
is solved at each node of the tree. 
Each time a new incumbent solution is found, a verification is made to check whether any lazy constraint is violated. 
If it is the case, the incumbent integer solution is discarded and the violated lazy constraints are (re)introduced in the constraint set of all unprocessed nodes of the tree, thereby cutting off the current~solution. 



We define here the linearization constraints 
in the sets $\mathcal{F}_y^z$ and $\mathcal{M}^{\tau}_{zU}$ as lazy constraints. The motivation for this choice is threefold  and guided by: the results of preliminary numerical tests, the very large number $\sum_{j\in J}|I_j|\cdot (|I_j|-1) / 2 $
of such constraints, and the fact that the inequalities in the set  
$\mathcal{M}^{\tau}_{zU}$ are not always needed, i.e., they only play a role if two drones are placed at the same DB. 

We introduce next the notations used when using the reformulation $\mathbf{R-MILP}$ within the algorithm. The exact same procedure is applicable for \MLB{$\mathbf{L-MILP}$} and only requires changing the name of the feasible set.
Let $\mathcal O$ denote the set of open nodes in the tree.
We call iteration a node of the branch-and-bound tree at which the optimal solution of the continuous relaxation 
is integer-feasible.
Let $\mathcal{C}$ be the entire constraint set of problem $\mathbf{R-MILP}$ (see Theorem \ref{T2}), 
$\mathcal{L}_k$
be the set of lazy constraints at node $k$, 
 $\mathcal{V}^L_k$ be the set of violated lazy constraints at $k$,
and  
$\mathcal{A}_k:= C \setminus \mathcal{L}_k$ be the set of active constraints at $k$, i.e., the set of constraints of the reduced-size outer approximation problem $\mathbf{OA-MILP}_k$.
The composition of the sets vary across the algorithmic process.

The outer approximation framework 
is designed as follows. 
At the root node ($k=0$), we have:
\vspace{-0.075in}
\begin{align}
& \mathcal{L}_0:= 
\{\mathcal{F}_y^z \cup \mathcal{M}^{\tau}_{zU}\}.
\label{S1} \\
& \mathcal{A}_0:= \{
\mathcal{B} ; \eqref{U}-\eqref{V} ;  \mathcal{M}^{\mu}_{yU} ; \mathcal{M}^{\omega}_{\gamma \mu}  \}. 
\label{S2} \\
&\mathcal{V}_0^L:= \emptyset. \label{S3}
\end{align}
\vspace{-0.36in}

\noindent At any node $k$, 
the reduced-size relaxation (outer approximation) problem $\mathbf{OA-MILP}_k$ is solved:
\vspace{-0.07in}
\[
\mathbf{OA-MILP}_k: \; \min \eqref{obj_lin} \quad \text{s.to} \quad (x,y,\gamma,z,U,\mu,\tau,\omega) \in \mathcal{A}_k. 
\vspace{-0.075in}
\]
Two possibilities arise 
depending on the optimal solution  $X^*_k$ 
of the continuous relaxation of $\mathbf{OA-MILP}_k$:
\begin{enumerate}
\vspace{-0.075in}
    \item If  $X^*_k$ is fractional, we introduce branching linear inequalities to cut off the fractional nodal optimal solution and we continue the branch-and-bound process. 
\vspace{-0.075in}    
    \item If $X^*_k$ is an integer-valued solution with better objective value than the one of the incumbent, we check for possible violation of the current lazy constraints: 
    \begin{itemize}
\vspace{-0.075in}
    \item If some constraints in $\mathcal{L}_k$ are violated by    $X^*_k$, they are inserted in $\mathcal{V}^L_k \subseteq \mathcal{L}_k$ and $X^*_k$ is discarded.   
    The lazy and active constraint sets of each open node $o \in \mathcal {O}$ are updated as follows:  
    \vspace{-0.075in}
    \[
    \mathcal{L}_o \leftarrow \mathcal{L}_o  \setminus \mathcal{V}^L_k  \quad \text{and} \quad     
    \mathcal{A}_o \leftarrow \mathcal{A}_o  \cup \mathcal{V}^L_k.
    \]
\vspace{-0.35in}
    \item If no lazy constraint 
    is violated, 
    $X^*_k$ becomes the incumbent solution 
    and the node~is~pruned.
    \end{itemize}
\end{enumerate}
\vspace{-0.05in}
The above process terminates when all nodes are pruned. The verification of the possible violation of the lazy constraints is carried out within a callback function, which is 
not performed at each node of the tree, but only when a better integer-valued feasible solution is found. 
The use of the lazy constraints within the outer approximation procedure is pivotal in the proposed method as shown in Section \ref{sub_sec_compute_e}. 

\vspace{-0.1in}
\subsection{Branch-and-Cut with Valid Inequalities and Optimality Cuts} \label{B&C}
\vspace{-0.05in}
The linearization approach for \MLB{{\bf R-MILP}}  involves the introduction of big-M constraints which typically lead to loose continuous relaxations. 
To tighten the continuous relaxation, we derive new valid inequalities and optimality cuts. 

A valid inequality does not rule out any feasible integer solutions but cuts off  fractional solutions feasible for the continuous relaxation problem. In essence, it pares away at the space between the linear and integer hulls, thereby providing a tighter 
formulation. 
In contrast to lazy constraints, a valid inequality is inserted in the formulation if the optimal solution of the continuous relaxation at any node is fractional and violates this valid inequality.
We shall now derive several types of valid inequalities.


The valid inequalities \eqref{VI3} reflect the fact that, if either one of the variables $U_j^{\WMR{(1)}}$ or $U_j^{\WMR{(2)}}$ related to the \ML{expected} delay is positive, then at least one drone is positioned at DB $j$, and that, if DB $j$ is not active, then the variables $U_j^{\WMR{(m)}}, m=1,2$ are equal to 0.
\begin{prop} \label{VALID1}
Let $U_{j}^{\WMR{(m)}} \in [0,\bar{U}_j^{\WMR{(m)}}], m=1,2$.
The linear constraints 
\label{valid_c_3}
\begin{equation} 
\label{VI3}
\gamma_j^{\WMR{(1)}} + \gamma_j^{\WMR{(2)}} \geq U_j^{\WMR{(m)}} / \bar{U}_{j}^{\WMR{(m)}} , \quad j \in J, m=1,2 
\end{equation}
are valid inequalities for problem \textbf{R-MILP}.
\end{prop}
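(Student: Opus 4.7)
The plan is to prove validity by a case analysis on the value of $\gamma_j^1 + \gamma_j^2$, exploiting the cardinality constraint \eqref{NEW1} which forces $\gamma_j^1 + \gamma_j^2 \in \{0, 1\}$ for any integer-feasible point of \textbf{R-MILP}. Since $\bar{U}_j^m$ is by definition the declared upper bound on $U_j^m$, the inequality \eqref{VI3} trivially holds whenever the left-hand side equals 1. So the real content of the proposition is that $U_j^m = 0$ whenever $\gamma_j^1 + \gamma_j^2 = 0$, i.e., no drones are deployed at base $j$.

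First I would dispatch the easy case: if $\gamma_j^1 + \gamma_j^2 = 1$, then since $U_j^m \leq \bar{U}_j^m$ by its declared bounds we have $U_j^m / \bar{U}_j^m \leq 1 = \gamma_j^1 + \gamma_j^2$, and \eqref{VI3} holds.

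Next I would handle the case $\gamma_j^1 + \gamma_j^2 = 0$. By \eqref{open_drone-2}, this forces $x_j = 0$, and then by \eqref{eq_lim} we get $y_{lj} = 0$ for every $l \in I_j$. Chasing this through the McCormick set $\mathcal{M}_{yU}^{\mu}$ enforcing $\mu_{lj}^m = y_{lj} U_j^m$, we obtain $\mu_{lj}^m = 0$ for all $l \in I_j$ and $m = 1, 2$. Likewise, from $\mathcal{F}_y^z$ defining $z_{lt}^j = y_{lj} y_{tj}$, every $z_{lt}^j = 0$, and then $\mathcal{M}_{zU}^{\tau}$ implies every $\tau_{lt}^j = 0$. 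Substituting into the defining equalities \eqref{U} and \eqref{V}, both the right-hand sides vanish termwise, giving $U_j^1 = 0$ and $4 U_j^2 = 0$, hence $U_j^m = 0$. Therefore $U_j^m / \bar{U}_j^m = 0 = \gamma_j^1 + \gamma_j^2$, and \eqref{VI3} holds with equality.

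The only delicate point is the second case: one must verify that the chain $y_{lj} = 0 \Rightarrow \mu_{lj}^m = 0 \Rightarrow U_j^m = 0$ really follows from the constraints already in the model, rather than being an additional assumption. This is straightforward because the McCormick inequalities in $\mathcal{M}_{yU}^{\mu}$ force $\mu_{lj}^m \leq \bar{U}_j^m y_{lj}$, so $y_{lj} = 0$ pins $\mu_{lj}^m$ at 0; the parallel argument pins $\tau_{lt}^j$ at 0; and then \eqref{U} and \eqref{V} collapse to $U_j^1 = 0$ and $U_j^2 = 0$ algebraically. Since no integer-feasible solution is excluded, \eqref{VI3} is indeed valid, and combined with the fact that it can be violated by fractional relaxation solutions where $U_j^m > 0$ but $\gamma_j^1 + \gamma_j^2$ is small, it serves as a genuine cut.
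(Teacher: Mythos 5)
Your proof is correct and follows essentially the same two-case argument as the paper's: when $\gamma_j^1+\gamma_j^2=1$ the declared bound $U_j^m\le\bar{U}_j^m$ suffices, and when $\gamma_j^1+\gamma_j^2=0$ one chases $x_j=0\Rightarrow y_{lj}=0\Rightarrow U_j^m=0$ through the constraints. If anything, your version is slightly more faithful to \textbf{R-MILP} itself, since you obtain $U_j^1=U_j^2=0$ from the linearized constraints \eqref{U}--\eqref{V} together with the McCormick sets, whereas the paper cites the original nonlinear definitions \eqref{1drone_cons}--\eqref{2drone_cons}.
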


The next valid inequalities \eqref{VI4} reflect that the queueing delay at a DB $j$ is a decreasing function of the number of drones positioned at this DB.
\begin{prop} \label{valid_c_6} 
\vspace{-0.075in}
The linear constraints
\vspace{-0.1in}
\begin{equation}  
\label{VI4}
U_j^{\WMR{(1)}} \geq U_j^{\WMR{(2)}}  , \quad j \in J
\end{equation}
are valid inequalities for problem \textbf{R-MILP}.
\end{prop}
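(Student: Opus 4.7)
The plan is to invert the MILP linearizations (U) and (V), express $U_j^1$ and $U_j^2$ in closed form in terms of a single utilization parameter $\rho_j$, and then reduce the inequality to an elementary algebraic one that is easily verified throughout the feasible region.

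First, I would use the McCormick constraints in $\mathcal{M}^{\mu}_{yU}$, which are exact at binary $y_{lj}$ and thus enforce $\mu^1_{lj}=y_{lj}U_j^1$. Substituting this into (U) collapses it to $U_j^1(1-\rho_j)=s_j$, where $\rho_j:=\sum_{l\in I_j}\lambda_l y_{lj}\tilde S_{lj}$ and $s_j:=\sum_{l\in I_j}\lambda_l y_{lj}\tilde S_{lj}^{2}$. Combined with $U_j^1,s_j\ge 0$, this forces $\rho_j<1$ whenever $s_j>0$ in any feasible solution (and $U_j^1=s_j=\rho_j=0$ otherwise, since all $\tilde S_{lj}>0$). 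A parallel back-substitution of $\mu^2_{lj}=y_{lj}U_j^2$, $z^j_{lt}=y_{lj}y_{tj}$, and $\tau^j_{lt}=U_j^2 z^j_{lt}$ into (V), together with the identity $y_{lj}^2=y_{lj}$ used in the proof of Theorem \ref{T2}, reduces (V) to the unlinearized relation $U_j^2\bigl[(2-\rho_j)^2+(2-\rho_j)^2\rho_j+(2-\rho_j)\rho_j^{2}\bigr]=s_j\rho_j$, whose denominator factorizes as $(2-\rho_j)[(2-\rho_j)(1+\rho_j)+\rho_j^{2}]=(2-\rho_j)(2+\rho_j)=4-\rho_j^{2}$, yielding the closed form $U_j^2=s_j\rho_j/(4-\rho_j^{2})$.

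Second, I would form and simplify the difference:
\[
U_j^1-U_j^2=\frac{s_j}{1-\rho_j}-\frac{s_j\rho_j}{4-\rho_j^{2}}=\frac{s_j\bigl[(4-\rho_j^{2})-\rho_j(1-\rho_j)\bigr]}{(1-\rho_j)(4-\rho_j^{2})}=\frac{s_j(4-\rho_j)}{(1-\rho_j)(4-\rho_j^{2})}.
\]
Since $s_j\ge 0$ and $\rho_j\in[0,1)$ by the first step, every factor in the resulting expression is nonnegative, so $U_j^1\ge U_j^2$ holds for every $j\in J$, with equality in the degenerate case $s_j=\rho_j=0$ where DB $j$ has no assigned demand.

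The only delicate step is the denominator factorization $(2-\rho_j)^2+(2-\rho_j)^2\rho_j+(2-\rho_j)\rho_j^{2}=(2-\rho_j)(2+\rho_j)$, since (V) encodes this quantity implicitly through a combination of bilinear and trilinear linearized terms and it is not immediately apparent that the substitutions restore the M/G/2 Pollaczek--Khinchine-type denominator. Once this collapse is established, the rest is one line of algebra, and one should additionally check that the boundary case $s_j=0$ does not introduce a division-by-zero artifact (as noted, both $U_j^1$ and $U_j^2$ vanish, and the inequality is trivially tight).
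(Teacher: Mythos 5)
Your proof is correct, and it reaches the conclusion by a slightly different final step than the paper. Both arguments reduce to the same closed forms $U_j^1=s_j/(1-\rho_j)$ and $U_j^2=s_j\rho_j/\big[(2-\rho_j)^2+(2-\rho_j)^2\rho_j+(2-\rho_j)\rho_j^2\big]$ obtained by undoing the linearizations at binary points (the paper works directly from \eqref{1drone_cons}--\eqref{2drone_cons}), and both hinge on the observation that \eqref{1drone_cons} together with $U_j^1\ge 0$ forces $\rho_j<1$ whenever DB $j$ serves any demand. Where you diverge is in how the comparison is closed: the paper bounds $U_j^2$ by successively enlarging its value through a chain of denominator relaxations, namely $U_j^2\le s_j/(2-\rho_j)^2\le s_j/(2-\rho_j)\le s_j/(1-\rho_j)=U_j^1$, each step justified by $(2-\rho_j)^2\ge 2-\rho_j\ge 1$; you instead collapse the denominator exactly to $4-\rho_j^2$ (the same factorization already established in the proof of Theorem \ref{T2}) and compute the difference in closed form as $s_j(4-\rho_j)/\big[(1-\rho_j)(4-\rho_j^2)\big]\ge 0$. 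Your route is tighter in that it exhibits the exact gap $U_j^1-U_j^2$ and identifies precisely when equality holds ($s_j=\rho_j=0$), at the cost of one factorization; the paper's route avoids the factorization entirely but only certifies the sign of the difference. Your handling of the degenerate closed-DB case matches the paper's.
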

The proofs of Proposition \ref{VALID1} and Proposition \ref{valid_c_6} are given in Appendix \ref{A3} and Appendix \ref{A4}.


Besides valid inequalities, we also derive optimality cuts (see Proposition \ref{valid_c_5}) which cut off integer feasible solutions that are not optimal or in a way that {\it not all} optimal solutions are removed. 
The proposed optimality cuts  \eqref{VI5} state that the  opening of a DB at location $j$ is required if and only if either one or two drones are positioned at $j$. 
The proof of Proposition \ref{valid_c_5} can be found in Appendix \ref{A5}.
\begin{prop} 
\label{valid_c_5} 
\vspace{-0.075in}
The linear constraints
\vspace{-0.1in}
\begin{equation} 
\label{VI5}
\gamma_j^{\WMR{(1)}} + \gamma_j^{\WMR{(2)}} = x_{j} , \quad j \in J
\end{equation}
are optimality-based cuts for problem \textbf{R-MILP}.
\end{prop}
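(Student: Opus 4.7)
The plan is to verify the claimed equality by a simple case analysis on the binary value of $x_j$, using only the existing constraints \eqref{NEW1} and \eqref{open_drone-2} of $\mathcal{B}$ together with the binarity of $\gamma_j^m$. The target is to show that every integer-feasible point of $\mathbf{R-MILP}$ already satisfies $\gamma_j^1+\gamma_j^2 = x_j$, so that appending this equality removes no feasible integer solution, and in particular removes no optimal solution; under the paper's definition (a cut that preserves at least one optimum qualifies) this establishes the statement.

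First I would fix an arbitrary $j \in J$ and consider the case $x_j=0$. The right-hand side of \eqref{open_drone-2} specialized to $M=2$ gives $\gamma_j^1+2\gamma_j^2 \leq 2x_j = 0$, and since $\gamma_j^1,\gamma_j^2 \in \{0,1\}$ this forces $\gamma_j^1=\gamma_j^2=0$, so $\gamma_j^1+\gamma_j^2 = 0 = x_j$, as required.

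Next I would handle the case $x_j=1$. The left-hand side of \eqref{open_drone-2} yields $\gamma_j^1+2\gamma_j^2 \geq 1$, so at least one of $\gamma_j^1,\gamma_j^2$ equals $1$. Combined with \eqref{NEW1}, namely $\gamma_j^1+\gamma_j^2 \leq 1$, this implies that exactly one of them equals $1$ and the other equals $0$; hence $\gamma_j^1+\gamma_j^2 = 1 = x_j$. Collecting both cases gives $\gamma_j^1+\gamma_j^2=x_j$ for every $j \in J$ at every integer-feasible point of $\mathbf{R-MILP}$, which is the desired conclusion.

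There is essentially no technical obstacle here; the only subtlety is the interpretation. The equality tightens the continuous relaxation (it rules out fractional points such as $x_j=0.5,\gamma_j^1=0.3,\gamma_j^2=0.1$ which satisfy \eqref{open_drone-2} and \eqref{NEW1}), yet it is valid for all integer-feasible solutions, so by the paper's broader definition of an optimality-based cut (one that does not remove all optimal solutions) it qualifies. I would briefly note that the two integer realizations at $x_j=1$, namely $(\gamma_j^1,\gamma_j^2)=(1,0)$ and $(0,1)$, correspond to placing one and two drones at $j$ respectively, and both are preserved by \eqref{VI5}, so the cut does not arbitrarily lock in a specific drone count at any open base.
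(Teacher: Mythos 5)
Your proof is correct and, read against the constraints as literally stated, it establishes something slightly stronger than the paper's own argument: using \emph{both} sides of \eqref{open_drone-2}, your case analysis on $x_j\in\{0,1\}$ shows that every integer-feasible point of $\mathbf{R-MILP}$ already satisfies $\gamma_j^1+\gamma_j^2=x_j$, so \eqref{VI5} removes no integer solution at all and is a fortiori an optimality-based cut. The paper takes a genuinely different route: it only uses the right-hand inequality $\sum_m m\gamma_j^m\le Mx_j$ to conclude $x_j=1$ when $\gamma_j^1+\gamma_j^2=1$, and in the case $\gamma_j^1+\gamma_j^2=0$ it treats the point $(x_j,\gamma_j^1,\gamma_j^2)=(1,0,0)$ as integer-feasible and argues that \eqref{VI5} cuts it off harmlessly, since an open DB with no drones cannot serve any OTR and yields no better objective value than the closed configuration $(0,0,0)$. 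The two arguments diverge precisely on whether the left-hand inequality $x_j\le\sum_m m\gamma_j^m$ of \eqref{open_drone-2} is in force: your $x_j=1$ case relies on it, whereas the paper's assertion that $x_j$ may equal $0$ or $1$ when both $\gamma$'s vanish implicitly ignores it. With that inequality present (as written), your argument is airtight and nothing is cut; were it absent, your $x_j=1$ case would fail on the point $(1,0,0)$ and one would need the paper's optimality argument to dispose of it. Either way the proposition holds; the only caveat worth adding is that under your reading \eqref{VI5} is really a valid inequality rather than a cut that discards suboptimal integer points, which is the distinction the paper's taxonomy of ``valid inequalities'' versus ``optimality-based cuts'' is meant to draw.
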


\vspace{-0.1251in}
\subsection{Cut Integration in Outer Approximation Method} \label{STRUCT}
\vspace{-0.075in}
The incorporation of the valid inequalities and optimality cuts in the outer approximation branch-and-cut algorithm {\tt OA-B\&C} has a significant computational impact as shown in Section \ref{PER_EVA}. The outer approximation branch-and-cut algorithm {\tt OA-B\&C} is structured as follows.

\ML{
First, the set $\mathcal{B}$ of optimality cuts \eqref{VI5} are added to the pool of lazy constraints \eqref{S1}: 
	\vspace{-0.05in}
	\begin{equation}
		\label{S4}
		\mathcal{L}'_0:= \mathcal{L}_0 \cup \mathcal{B}.
		\vspace{-0.05in}
	\end{equation}
Second, the valid inequalities 
are derived up-front, incorporated into a pool of user cuts, and applied and checked dynamically each time the nodal optimal solution $X^*_k$ is fractional through a user cut callback implemented in {\sc Gurobi}.
Let $\mathcal{U}_0$ be the set of user cuts (valid inequalities) at the root node: 
$\mathcal{U}_0
:= \{\eqref{VI3}-\eqref{VI4} \}$ while  $\mathcal{V}^U_k$
is the set of valid inequalities violated by the fractional optimal solution of the continuous relaxation at node $k$.
	
The valid inequalities modifies the 
 outer approximation framework as follows. 
If $X^*_k$ is fractional, the user callback is applied, and the violated -- if any -- valid inequalities in the current user cut pool are added to the active constraint set of each open node $o \in \mathcal {O}$ (i.e., to cut off $X^*_k$)  and removed~from~$\mathcal{U}_o$:} 
\vspace{-0.1in}
\[
\mathcal{U}_o \leftarrow \mathcal{U}_o  \setminus  \mathcal{V}^U_k \quad \text{and} \quad
\mathcal{A}_o \leftarrow \mathcal{A}_o  \cup \mathcal{V}^U_k.
\vspace{-0.085in}
\]
If no inequality in $\mathcal{U}_o$ is violated by $X^*_k$, then two branching constraints are entered to cut off $X^*_k$ and the next open node is processed. 
Note that, if $X^*_k$ 
is integer feasible, the user cut callback is not applied. The algorithm stops when the set of unprocessed nodes becomes empty. 
The pseudo-code of the algorithmic method {\tt OA-B\&C} is in Appendix \ref{A-PS}.

\vspace{-0.1in}
\MLB{
\begin{rem} \label{rem_oa}
We have implemented a simpler outer approximation {\tt OA} algorithm 
based on the general framework described in Section \ref{sub_sec_lazy_c}. It is intended to serve as a benchmark and, in particular, to assess the added value of incorporating the valid inequalities and optimality cuts described above in the outer approximation method.
For concision purposes and since, as shown in Section \ref{sub_sec_compute_e}, it does not perform as well as {\tt OA-B\&C}, we refer the reader to Appendix \ref{APP-sub_sec_lazy_c} for its detailed description. 
\end{rem}
}

\vspace{-0.25in}
\section{Data-Driven Tests and Insights} 
\label{sec_TESTS}
\vspace{-0.15in}
To demonstrate the benefits and applicability of the proposed approach,
we conduct extensive numerical tests and a simulation study using the real-life 
data described in Section \ref{sub_data}. 
Section \ref{sub_delivery} describes the simulation framework used to carry out a comprehensive comparison of our approach with the EMS system in place in Virginia Beach, 
provides insights on response time, chance of survival, QALY, and costs, and attests the applicability and robustness of our approach through a cross-validation analysis.
Section \ref{sec_baseline} examines the added value of the network constructed with our model as compared to  those obtained with \ML{two constructive greedy heuristics and a model from the literature \cite{BOCHAN}.} 
Section \ref{sub_sec_compute_e} evaluates the computational efficiency and tractability of the reformulation and algorithmic framework.

\vspace{-0.096in}
\subsection{Real-life Opioid Overdose Data} \label{sub_data}
\vspace{-0.085in}
The dataset used in the tests describes all the opioid overdose requests (actual and suspected opioid incidents) in the city of Virginia Beach and is publicly available \cite{CustodioLejeune2021}.  
The data were collected through multiple sources, including the OpenVB data portal, Freedom of Information Act requests to the government of Virginia Beach, and public reports, and were  
validated by the Virginia Beach officials. 

The dataset contains all dispatch records for OTRs from the second quarter of 2018 to the third quarter of 2019, which amounts to a total of  733 data points (overdoses). 
Each record has four fields including the time at which the request was received by the EMS, the response time, i.e, time between the reception of the request and the arrival of the EMS personnel on the scene, and the location (i.e., latitude and longitude) of the request. The dataset also provides the location of the twenty-six established EMS  facilities (i.e., fire, police, and EMS stations) that can be selected as drone bases. 
As in \cite{boutilier2017optimizing}, we consider that  drones can travel at a speed of $27.8$ meters per second  (m/s) and take 10 seconds to take off and to land. We use 25 minutes as the expected non-travel service time which includes the time to administer 
 naloxone at the overdose location and to recharge and prepare the drone for the next assignment. 

We have conducted a grid search 
in order to identify the minimal size of the drone network to be able to respond to all OTRs. 
This reveals that the drone network should include $p=11$ drones and does not require more than $q=10$ DBs. Thereafter, we refer to {\it base scenario} the case in which the drone response network allows for the opening of ten DBs and the deployment of eleven drones.

\vspace{-0.06in}
\subsection{Interplay Between Response Time, Survival Chance, QALY, and Delivery Mode} \label{sub_delivery}
\vspace{-0.075in}
In this section, we study the relationships between response time, survival chance, delivery mode, and compare the drone-network constructed with our approach with the ambulance-based EMS network currently in use in Virginia Beach.
To assess the two EMS systems under similar experimental settings, we use the historical real-life data for the ambulance-based EMS system in Virginia Beach and the data obtained from the simulator (presented next in Section \ref{sec_sim}) for the drone system. 
Using the results calculated numerically with our optimization model may have provided a less balanced comparison as the mathematical model may ignore some practical issues that may exist in realty. 
\ML{Section \ref{sub_response} analyzes the reduction in the response time attributable to the drone network, the applicability and robustness of the proposed approach, and the spatio-temporal adjustments in the OTRs and drone network. 
Section \ref{sub_SURVIVAL} investigates the increase in the chance of survival of an overdose victim and the expected number of lives saved. Section \ref{QALY} quantifies the impact on the QALY and performs a cost analysis.
}

\vspace{-0.1208in}
\subsubsection{Drone Network Simulator} \label{sec_sim}
\vspace{-0.095in}
To evaluate the performance of the drone network, 
we develop a simulation model that replicates how the network constructed with the proposed model responds to emergency opioid overdose requests. 
The simulation framework captures the dynamic nature of the queueing system, and may provide a more realistic estimate of the response times than the one obtained by calculating them numerically (with the optimization model and objective function). 
The simulator dispatches the nearest available drone when a request for an opioid overdose is transmitted. If no drone is available within the catchment area, the request is queued until a drone becomes available.
Our simulation model is similar to the simulator developed for ambulance-based EMS systems in \cite{BoutilierOR2020} but incorporates a few adjustments. Indeed, due to its limited battery autonomy, a drone always flies back to its base after completion of the service at the scene, whereas an ambulance in \cite{BoutilierOR2020} can be either re-dispatched to a new incident or  go back to its home base. 
\ML{The implementation details and pseudo-code of the simulator are given in Appendix \ref{A-Sim}.}

\vspace{-0.1in}
\subsubsection{Response Time Reduction and Network Robustness} \label{sub_response}
\vspace{-0.06in}
We analyze here how the response time, a critical metric for EMSs, can be improved by using drones. We first carry out an in-sample analysis before cross-validating the results and assessing, using out-of-sample data, the stability of the results and the robustness of the designed network. 

\noindent
{\bf In-sample analysis:} 
We consider quarterly datasets of opioid overdoses which we denote 2018Q2 (2018's second quarter), 2018Q3, 2018Q4, 2019Q1, and 2019Q2. We consider the base scenario in which one can open up to 10 DBs and deploy 11 drones ($q=10, p=11$),
To estimate the in-sample performance and response times of the drone network (see Table \ref{c_test_result}), we follow a two-step process. First, we solve the optimization problem {\bf R-MILP} 
and use its solution to configure the network, namely to decide where to open DBs and where to position drones. 
Second, we run the simulation model (Section \ref{sec_sim}) based on the network configuration obtained in the first step and calculate the resulting response times, called simulated response times. All response times reported in the paper are simulated times.



Table \ref{c_test_result} shows the average \ML{simulated} response times on the training sets with the proposed drone network and those obtained in Virginia Beach with their ambulance-based EMS network, which highlights the significant decrease in the average response time enabled by the drone network (i.e., \ML{1 minute and 32 seconds} versus 8 minutes and 56 seconds for the ambulance network). As compared to the current ambulance network, the drone network reduces the response time by \ML{$82.8\%$} on average. The reduction of the response time is stable across all quarters and is not due to chance.  


\begin{table}[H]
\centering
\setlength\extrarowheight{1pt}
\ML{
\begin{tabular}{P{3cm}|P{3cm}|P{4.5cm}| P{3cm}}
 \hline
\multirow{2}{*}{Training Set} & \multicolumn{3}{c}{Response Time (minutes)}  \\ \cline{2-4}  
\multirow{2}{*}{} & {\bf R-MILP}
& VB Ambulance Networks & Time Reduction\\
 \hline
2018Q2 & \textcolor{black}{1.49} & 8.77 & 83.01 \%   \\
2018Q3 & \textcolor{black}{1.58} & 9.02 & 82.48 \% \\ 
2018Q4 & \textcolor{black}{1.54} & 9.42 & 83.65 \% \\
2019Q1 & \textcolor{black}{1.50} & 8.56  & 82.48 \% \\
\hline
Quarter Average & 1.53 & 8.94 & 82.92 \% \\
 \hline
\end{tabular}
\vspace{-0.04in}
\caption{\label{c_test_result} Response Times for Quarterly Training Sets ($q = 10, p = 11$). \textcolor{blue}{The confidence intervals for the responses time obtained with {\bf R-MILP} can be found in Table \ref{ci_train_milp} in Appendix \ref{APP-CI}.}}
}
\end{table}


\vspace{-0.15in}

\noindent
{\bf Out-of-sample analysis:} We now carry out a cross-validation analysis to assess how the training set-based networks perform on out-of-sample data that were not used in the design of the network.  
Using the five sets of quarterly data, we create 
four pairs of consecutive quarterly sets, e.g., (2018Q2 and 2018Q3), where the first one is the training set (2018Q2) and the second one is the testing set (2018Q3).   
Table \ref{data_summary} in Appendix \ref{app-sec-data-summary} describes the size of the datasets.  The number of OTRs 
in each quarterly dataset is $|I|$. 

For each pair of training and testing sets, we use the training set only to build the network: we solve the network design problem $\mathbf{R-MILP}$ to obtain the optimal drone network configuration (i.e. locations of opened DBs and number of drones deployed at each). The average response times is calculated by running the simulator (see Section \ref{sec_sim}) using the OTRs data from the subsequent testing quarter.
The results are displayed in Table \ref{c_test_result_testing}. The cross-validation analysis confirms the results envisioned in the in-sample analysis. The network configurations for the training sets reduces in a striking manner the out-of-sample response time. Across all quarters, the average out-of-sample response time is of \ML{1 minutes and 38 seconds} while it amounts to 9 minutes and 19 seconds for the ambulance network in Virginia Beach.  This corresponds to a \ML{$82.5\%$} reduction in the average response~time.
The quarterly average response times are stable and similar to those obtained in the in-sample analysis.

\begin{table}[h]
\centering
\setlength\extrarowheight{1pt}
\ML{
\begin{tabular}{P{3.6cm}|P{3.6cm}|P{4.2cm}| P{2.8cm}}
 \hline
\multirow{2}{*}{Training/Testing Data} & \multicolumn{3}{c}{Testing  Quarter Response Time (min.) }  \\ 
\cline{2-4}  
\multirow{2}{*}{} & {\bf R-MILP}
& VB Ambulance Network & Time Reduction\\
 \hline
2018Q2 / 2018Q3 & \textcolor{black}{1.72} & 9.02 &	80.93\%  \\
2018Q3 / 2018Q4 & \textcolor{black}{1.57}  & 9.42 &	83.33\% \\ 
2018Q4 / 2019Q1 & \textcolor{black}{1.80} & 8.56 &	78.97\% \\
2019Q1 / 2019Q2 & \textcolor{black}{1.48} & 10.27 & 85.59\% \\
\hline
Quarter Average & 1.64 & 9.32 &	82.40\% \\
 \hline
\end{tabular}
}
\vspace{-0.04in}
\caption{\label{c_test_result_testing} Out-of-Sample Response Time for Testing Quarters ($q = 10, p = 11$). \MLB{The confidence intervals for the responses time obtained with {\bf R-MILP} can be found in Table \ref{ci_test_milp} in Appendix \ref{APP-CI}.}}
\vspace{-0.1in}
\end{table}


We now illustrate the spatio-temporal variability of the OTRs and its impact on the optimal configuration of the drone network.  
 Figure \ref{fig:dnn_network} 
 presents the distribution of the OTRs and 
 drone networks for two consecutive quarters, i.e., 2018's second (2018Q2) and third (2018Q3) quarters. 
 In both networks, ten DBs are opened; one drone is located at nine of the DBs operating as $M/G/1$ systems while the last one operates as an $M/G/2$ system with two available drones. 
Comparing the two networks, one can see that the location of most DBs remains unchanged over time. One noticeable change is that, in the 2018Q3 network a new DB is opened in the southwest to cover an increased  number of OTRs in this remote area. 
Another change is the new DB in the North. In Figure \eqref{fig:subim2}, the two new DBs are surrounded by a rectangle while the two closed ones are circled. The location changes reflect the impact of the spatial uncertainty on the optimal configuration of the network. In that regard, the flexibility, ease, and limited cost to open (close) DBs \cite{van2017drone} are important to adapt to geographical changes in the occurrence of OTRs. 

\begin{figure}[h]
\begin{subfigure}[b]{0.5\textwidth}
\includegraphics[width=\textwidth, height=6.7cm]{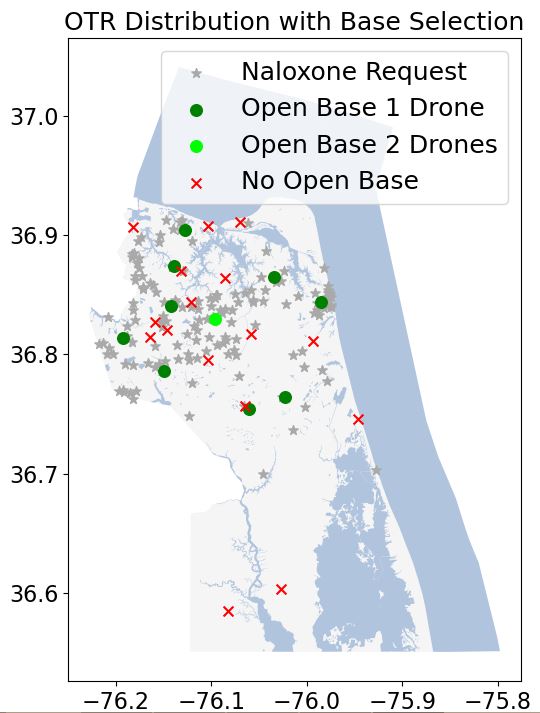} 
\caption{2018 Quarter 2}
\label{fig:subim1}
\end{subfigure}
\hfill
\begin{subfigure}[b]{0.5\textwidth}
\includegraphics[width=\textwidth, height=6.7cm]{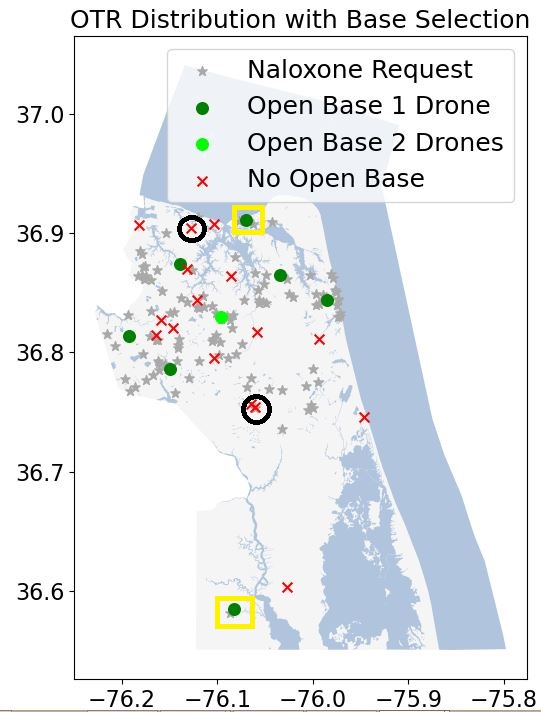}
\caption{2018 Quarter 3}
\label{fig:subim2}
\end{subfigure}
\vspace{-0.25in}
\caption{
OTR Demand and Drone Network Configuration}
\label{fig:dnn_network}
\vspace{-0.15in}
\end{figure}



\vspace{-0.115in}
\subsubsection{Impact on Probability of Survival} \label{sub_SURVIVAL}
\vspace{-0.085in}
The goal of this section is to quantify the benefits of the reduced response time afforded by the drone network (Section \ref{sub_response}) on the survival chance of overdose victims. 
Since we are not aware of any functional relationship between response time and  survival probability of an opioid overdose victim, we hereby employ an indirect two-step approach using known survival functions for cardiac arrests. The main reason for this is that many overdoses lead to cardiac arrests. Indeed, the increase in cardiac arrest caused by opioid medications is said to be “{\it the most dramatic manifestation of opioid use disorder}” \cite{dezfulian2021opioid}.

\noindent
\underline{Step 1:}
We calculate the number of out-of-hospital cardiac arrests (OHCA) due to opioid overdoses using the statistics reported by \citeauthor{dezfulian2021opioid} \cite{dezfulian2021opioid} \ML{who} indicate that 15\% of opioid overdoses lead to an OHCA. 
As shown in Table \ref{data_summary}, 560 overdose incidents are reported in Virginia Beach over one year (i.e., from 2018Q3 to 2019Q2), leading to an estimated (see \cite{dezfulian2021opioid}) number of 84 overdose-associated OHCAs. 

\noindent
\underline{Step 2:}
Using three known survival functions for OHCAs (see Table \ref{survival}) that define the survival probability $f(x)$ to an OHCA as either a semi-continuous or a logistic function of the response time $x$, we calculate the estimated probability of survival for overdose-associated OHCAs with the drone network and with the EMS ambulance network in use in Virginia Beach. This allows us in turn to derive the differences in the survival chance and in the expected number of saved lives with the drone and ambulance networks.  
\begin{table}[h]
	\centering
	\vspace{-0.1in}
	\setlength\extrarowheight{4.75pt}
	\begin{tabular}{c|c|c}
		Author & Function Type & $f(x)$ \\ 
		\hline
 		Bandara et al. \cite{Bandara:2014} & 
        Semi-continuous & 
		$\max \left[ 0.594-0.055x, 0\right]$	\\
		\hline			
 		De Maio et al. \cite{DEMAIO} 
 		& Logistic & $\left(1+e^{0.679+0.262x}\right)^{-1}$\\
		\hline
 		Chanta et al. \cite{Chanta2014}	
 		& Logistic &	
 		$\left(1+e^{-0.015+0.245x}\right)^{-1}$	\\
	\end{tabular}%
		\caption{Survival Functions for OHCAs}
	\label{survival}
\vspace{-0.175in}
\end{table}

Figure \ref{fig:survival} shows the \ML{average} survival probability -- for the three survival functions in Table \ref{survival} -- of the overdose-associated OHCAs obtained with the drone and with Virginia Beach's ambulance-based EMS network. 
\ML{The survival probability reported in Figure \ref{fig:survival} is the average of the survival probability of each OTR in the testing set from 2018Q3 to 2019Q2 in Table \ref{c_test_result_testing}.}
The difference between the two networks is striking. The drone network significantly increases the patients' survival probability.
Indeed, the survival chance with the drone network 
is \ML{3.54 (resp., 4.00 and 2.73)} times larger than the one with Virginia Beach's ambulance network as estimated by the survival function \cite{Bandara:2014}
 (resp., \cite{DEMAIO}, and \cite{Chanta2014}).
\vspace{-0.05in} 
\begin{figure}[H]
\centering
\includegraphics[width=\textwidth, height=5.5cm]{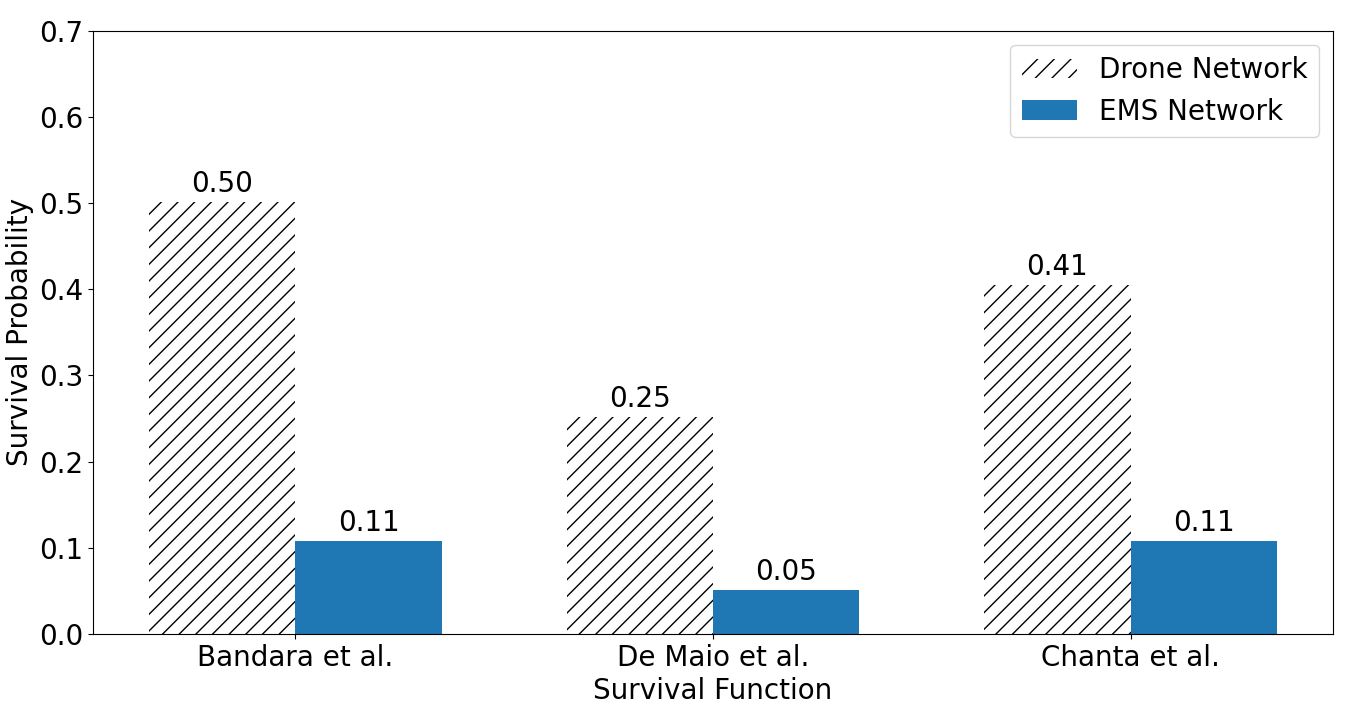}
\vspace{-0.3in}
\caption{
\ML{Average} Survival Probability for Drone Network and Virginia Beach Ambulance Network}
\label{fig:survival}
\vspace{-0.15in}
\end{figure}

Table \ref{casualty} displays the survival probability and the expected number of patients surviving an overdose-associated OHCA in Virginia Beach for the four quarters considered. These statistics are provided for the drone network (column 4) and for Virginia Beach's ambulance network (column 5).
One can see that the drone network is extremely beneficial and increases the number of survivors by \ML{366\%, (resp., 425\% and 278\%)} using the survival function \cite{Bandara:2014} (resp.\cite{DEMAIO} and \cite{Chanta2014}). 

\vspace{-0.05in}
\begin{table}[H]
\centering
\begin{tabular}{P{2.4cm} | P{2.8cm} | P{3cm} | P{2.65cm} | P{2.65cm} }
 \hline
Total Number & Overdose-related & 
Survival & \multicolumn{2}{c}{ Survival Prob. / No. of Survivors} \\ 
\cline{4-5} of Overdoses& OHCAs & Function & Drone Network & EMS Network \\
 \hline
\multirow{3}{*}{560} &
\multirow{3}{*}{84} 
 & 	Bandara et al.
 \cite{Bandara:2014} &  50\% / 42 & 11\% / 9  \\
 & & De Maio et al. \cite{DEMAIO} & 25\% / 21 & 5\% / 4 \\
 & 	& Chanta et al. \cite{Chanta2014} & 41\% / 34 & 11\% / 9  \\
 \hline
\end{tabular}%
\caption{\label{casualty} Expected Saved Lives with Drone Network}
\end{table}
\vspace{-0.16in}

Table \ref{casualty} shows that, depending on the survival function, the number of {\it additional lives} saved thanks to the \ML{drone} network would vary between \ML{17 and 33} in Virginia Beach over a one-year period of time.
This is quite a striking result and is most likely an underestimate of the actual number of lives that could be saved with the drone network. Indeed, the above results only take into account the benefits of the drone-based reduced response time for the estimated percentage (15\%) of opioid overdoses leading to a cardiac arrest. It is reasonable to assume that the reduced response time will also affect the outcomes of the overdoses not leading to a cardiac arrest, as underlined by \citeauthor{ornato2020feasibility} \cite{ornato2020feasibility} 
who state that: "{\it
Every minute that goes by before paramedics or others can attempt resuscitation, an opiate overdose victims’ chance of survival decreases by 10 percent}". 

\vspace{-0.075in}
\subsubsection{Impact on Quality-adjusted Life Year and Cost Analysis} \label{QALY}
\vspace{-0.1in}
The quality-adjusted life year (QALY) is a concept commonly used in healthcare economic analysis to evaluate how a healthcare issue impacts a survivor's quality and duration of future life (see, e.g., \cite{BogleQALY,SASSI}). 
A QALY equal to 1 is indicative of one year in perfect health. 
As shown in \eqref{qaly_formula}, the total QALY (T-QALY) for a patient surviving a healthcare problem is the sum of the discounted QALY of each year during one's mean life expectancy after the healthcare incident \cite{BogleQALY}:
\vspace{-0.07in}
\begin{equation}
T-QALY  = \sum_{t = 1 }^{T} \frac{\alpha t}{(1 + c)^{t}} \ \ , \label{qaly_formula}
\end{equation} 
where $T$ is the number of years of remaining life expectancy, $\alpha \in [0,1]$ is a coefficient accounting for the reduced quality of life consecutive to the healthcare incident, and $c$ is a discount rate reflecting that people prefer good health sooner than later (i.e., a QALY in earlier years is more valuable than in later~ones).

To conduct the QALY analysis, we assume, as in \cite{BogleQALY}, that a patient surviving an overdose-associated OHCA has a mean life expectancy of 11.4 years ($T$ = 11.4), that each year corresponds to $0.85$ QALY ($\alpha = 0.85$), and that the discount rate $c$ is 3\%. Using \eqref{qaly_formula}, the T-QALY is equal to 8.47 years  for each OHCA survivor (see Table \ref{qaly} in Appendix~\ref{DATA-COST}). 

Table \ref{add_qaly} presents the {\em additional QALY} gained by using drones instead of ambulances for the survival functions \cite{Bandara:2014}, \cite{DEMAIO}, and \cite{Chanta2014} (see Table \ref{survival}).  The additional QALY attributable to the drone network for the overdose-associated OHCAs in Virginia Beach over one year is very high, varying between \ML{143 and 279} years among the three survival functions. 

\vspace{-0.03in}
\begin{table}[H]
\centering
 \resizebox{\columnwidth}{1cm}{%
\begin{tabular}{P{1.5cm} | P{3cm} | P{2.4cm} | P{2.4cm} |P{3.5cm} | P{2.9cm}}
 \hline
\multirow{2}{*}{T-QALY} & 
Survival & \multicolumn{2}{c|}{Number of Survivors} & Additional Survivors & Additional QALY \\ 
\cline{3-4} & Function  & Drone Network & EMS Network & with Drone Network& over one year\\
 \hline
\multirow{3}{*}{8.47}
 & 	Bandara et al. \cite{Bandara:2014} & 42 & 9 & 33 & 279 \\
 & De Maio et al. \cite{DEMAIO} & 21 & 4 & 17 & 143 \\
 &  Chanta et al. \cite{Chanta2014} & 34 & 9 & 25 & 211  \\
 \hline
\end{tabular}%
}
 \vspace{-0.06in}
\caption{\label{add_qaly} Additional QALY with Drone Network over one Year (from 2018Q3 to 2019Q2)}
\end{table}
\vspace{-0.15in}
A drone and the accompanying DB are estimated to cost $\$15,000$, to have a four-year lifespan, and to have an annual maintenance cost of $\$3,000$ \cite{BogleQALY}. Using these statistics (see Table \ref{dronecost} in Appendix \ref{DATA-COST}), the total discounted costs of the eleven drones used in the base scenario amount to  $\$287,664$ (i.e., using a 3\% discount rate for the annual maintenance cost) and the drone-based network allows a reduction in the average response time of about 7 minutes and 16 seconds as compared to the ambulance network. 
That is quite a difference with the option of buying ambulances, each costing between \$150,000 to \$200,000, which, as stated by \citeauthor{ornato2020feasibility}. \cite{ornato2020feasibility} would cost “{\it millions and millions of dollars just to reduce the response time by a minute}” (i.e., from eight to seven).

Assuming that the number of overdoses in Virginia Beach remains stable, the total additional QALY in four years (i.e., lifespan of a drone) attributable to the drone network reaches \ML{1116, (resp., 572 and 844)} years with the survival function \cite{Bandara:2014} (resp., 
 \cite{DEMAIO}, and \cite{Chanta2014}), which in turn implies that the proposed drone network only costs \ML{\$257 (resp., \$503 and \$341)} per incremental QALY.
As a benchmark, \citeauthor{BogleQALY} \cite{BogleQALY} report a \$3143 cost per incremental QALY for a network of drones delivering defibrillators to OHCAs in Durham, NC. More generally, it is considered that a medical intervention with a \$50,000 per QALY ratio is cost-effective \cite{NEUMANN}.

\vspace{-0.03in}
\begin{table}[H]
\centering
\begin{tabular}{P{3.5cm} | P{3.5cm} | P{3.5cm} | P{3.5cm} }
 \hline
\multirow{2}{*}{Drone Network Cost} & \multirow{2}{*}{Survival Function} & Total Additional & Cost per\\
& & QALY in Four Years &Additional QALY \\
 \hline
\multirow{3}{*}{\$287,664} 
 & Bandara et al. \cite{Bandara:2014} & 1116 & \$257 \\
 & De Maio et al. \cite{DEMAIO} & 572 & \$503\\
 & Chanta et al. \cite{Chanta2014} & 844 & \$341 \\
 \hline
\end{tabular}%
\vspace{-0.06in}
\caption{\label{QALY_4yr} Cost Analysis for Drone Network per Incremental QALY ($p$=11)}
\end{table}

\vspace{-0.275in}
\subsection{Comparison with Alternative Approaches} \label{sec_baseline}
\vspace{-0.085in}
\ML{The results in the preceding section show that the drone network considerably reduces the average response time and increases the chance of survival of overdosed patients as compared to the ambulance-based EMS network in Virginia Beach. This is due, in part, to the fact that drones are not hindered by traffic and can reach the overdose incidents faster than ambulances. 
We investigate next to which extent the proposed drone network modelling and algorithmic technique contributes to the reduction in response  time. We compare the results of our approach to those obtained with two constructive greedy heuristics (Section \ref{Heuristics}) and with a drone-network optimization model from the literature \cite{BOCHAN} (Section \ref{sec_existing_model}).
}

\vspace{-0.075in}
\ML{
\subsubsection{Comparison with Constructive Greedy Heuristics}
\label{Heuristics}
\vspace{-0.075in}
In this section, we implement two constructive greedy heuristics (see, e.g., \cite{Dziuba2023,2021Gwalani,2018Hoekstra} 
and compare the networks obtained with these heuristics with the one constructed with our approach.

\vspace{0.05in}
\noindent
{\bf Description}:
The first heuristic, referred to as \textbf{H-OTR}, focuses on the number of overdoses at each OTR location and ranks the locations in a decreasing order by overdose occurrence (arrival) rate. The location ranked one is the one which has the largest occurrence rate of overdoses. We create two lists including respectively the OTR locations and the DB candidate locations and we update the lists at each step.

The heuristic \textbf{H-OTR} proceeds as follows. At each step, a new DB is opened. At first, we consider the OTR location with the largest overdose occurrence rate (i.e., ranked one) and we open a DB at the candidate location which is closest to the highest-ranked OTR location. This OTR location and the selected DB location are then removed from their respective lists. At each subsequent step, we consider the highest-ranked OTR location remaining in the list and open a DB at the closest candidate location. The addition of DBs stops when the number of DBs that can be opened is attained. If some drones are left, i.e., instances in which the number of drones is larger than the number of DBs that can be opened, we place a second drone at the DB(s) opened first.
The pseudo-code of \textbf{\textbf{H-OTR}} is in Appendix~\ref{Constructive_baseline}. } 

\ML{
The second heuristic, referred to as \textbf{H-DB}, focuses on the proximity of the DB candidate locations from the overdose incidents. Instead of operating on a ranking of the overdose locations as in \textbf{H-OTR}, the \textbf{H-DB} heuristic ranks the DB candidate locations. The DB candidate location ranked first is the one with the largest potential demand, i.e., largest number of overdose incidents, within the catchment area of a drone positioned at this DB. As before, we create and update at each step of the heuristic two lists including respectively the OTR locations and the DB candidate locations.
The heuristic \textbf{H-DB} proceeds as follows. At each step, a new DB is opened. 
At first, we consider the DB candidate location with the largest demand and we open a DB there. After each iteration, the selected DB and the OTR locations closest to it are removed from the candidate sets considered at the subsequent iterations. A new ranking is then recalculated for the DB candidate locations based on the remaining (udpated) list of OTR locations. The greedy addition of DBs stops when the number of DBs that can be opened is attained. 
As for \textbf{\textbf{H-OTR}}, if the number of available drones is larger than the number of DBs, we place a second drone at the DB(s) opened first, i.e., those with the highest arrival rate. 
The pseudo-code of \textbf{H-DB} is in Appendix \ref{Constructive_baseline}. 

\vspace{0.075in}
\noindent
{\bf Results and Comparison}:
We now compare the network built with our approach and model \textbf{R-MILP} with the two networks built using the heuristics \textbf{H-OTR} and \textbf{H-DB} respectively. We proceed as follows. 
First, we run the heuristics \textbf{H-OTR} and \textbf{H-DB} and solve \textbf{R-MILP} to obtain the location of the opened DBs and the number of drones positioned at each DB with each approach. Second, we run the simulation 100 times for each of the three networks built with \textbf{H-OTR}, \textbf{H-DB}, and \textbf{R-MILP}. For each, we record two types of response times from the simulation. First, we record the average network-wide response time $\bar{R}$ (i.e., average across all OTRs). Second, for each OTR $i$, we record the average response time $R_i$ across the 100 simulations. 
The results described next attest the superior performance of the network built with our model \textbf{R-MILP}. The advantages of the \textbf{R-MILP}-based network over those constructed with the heuristics are twofold. First, the \textbf{R-MILP}-based network reduces significantly the average response time $\bar{R}$.
Table \eqref{n_hdb_compare} 
shows that the average response time $\bar{R}$ of the \textbf{R-MILP} network is 36.02\% and 46.78\% lower than the average response time with the \textbf{H-DB}- and \textbf{H-OTR}-based networks, respectively.
}

\begin{table}[ht]
\centering
\setlength\extrarowheight{1pt}
\ML{
\begin{tabular}{P{2cm}|P{2cm}|P{2cm}|P{2cm}| P{2cm}|P{2cm}}
 \hline
\multirow{3}{*}{Training Set} & 
\multicolumn{3}{c|}{Average Network-wide}&
\multicolumn{2}{c}{\textbf{R-MILP} Time Reduction} \\ 
 & \multicolumn{3}{c|}{Response Time $\bar{R}$ (minutes)}&
\multicolumn{2}{c}{with respect to:} \\
\cline{2-6} 
& \textbf{H-DB} & \textbf{H-OTR} & \textbf{R-MILP} & \textbf{H-DB} &  \textbf{H-OTR} \\
 \hline
2018Q2 & 2.29 & 2.75& 1.49 & 34.93\% & 45.82\% \\
2018Q3 & 2.29 & 3.70 & 1.58 & 31.00\% & 57.30\%\\ 
2018Q4 & 2.49 & 2.31 & 1.54 & 38.15\% & 33.33\%\\
2019Q1 & 2.48 & 2.72 & 1.50  & 39.52\% & 44.85\% \\
\hline
Average & 2.39 & 2.87& 1.53 & 36.02\% & 46.78\%\\
 \hline
\end{tabular}
}
\caption{\label{n_hdb_compare} Average Response Time $\bar{R}$ for \textbf{H-DB}, \textbf{H-OTR}, and \textbf{R-MILP} Networks ($q = 10, p =~11$). }
\end{table}


\noindent

\ML{
Second, the {\bf R-MILP}-based network also reduces considerably the upper tail of the distribution of  the average OTR response time $R_i$. Figure \ref{hist_heuristic_2018Q2} (in Appendix \ref{Constructive_baseline2}) displays the distribution of the average response time $R_i$ for the {\bf R-MILP}-, \textbf{H-OTR}-, and \textbf{H-DB}-based networks.  As it can be clearly seen, the {\bf R-MILP}-based network does not only lessen the average response time (red solid line), but also significantly reduces the gap, i.e., time difference, between the average response time and the 90th quantile of the response time (red dashed line). The narrower gap is indicative of a more equitable EMS system since the response time exhibits less variability among all OTRs.
}

As few seconds can be the difference between life and death in the context of opioid overdoses \cite{ornato2020feasibility}, the value of our model
becomes evident and explains why Buckland et al. \cite{BucklandDesignConsider} urge, in regards to using drones for medical emergencies, to develop advanced resource allocation and optimization algorithms.

\vspace{-0.08in}
\subsubsection{Comparison with Alternative Drone Delivery Model} \label{sec_existing_model}
\vspace{-0.08in}
To show the added benefit of our modeling approach, referred to as \textbf{R-MILP}, we build a benchmark network using the optimization model proposed in \cite{BOCHAN} in which drones are utilized to deliver automated external defibrillators to people having out-of-hospital cardiac arrests. We refer thereafter to this model with the acronym \textbf{BM} short for benchmark model. At a high level, the model \textbf{BM} seeks to minimize the total number of drones to be deployed while ensuring that the average response time of the drone network is at least $\gamma$ minutes shorter (i.e., $\gamma$ is a parameter fixed a priori) than the current EMS system. Using the data from the second quarter of 2018, we set $\gamma=3$ minutes -- which is the time improvement targeted in \cite{BOCHAN} -- in model \textbf{BM}.
We carry out the simulation process 100 times using the optimal DB locations and number of drones determined by the optimal solutions of the two models \textbf{BM} and \textbf{R-MILP}. For each simulation run in each model, we record the mean network-wide response time $\bar{R}$ across all OTRs.
Table \ref{bm_compare} reports the statistics for the network-wide response time $\bar{R}$ based on 100 simulations. 

\begin{table}[H]
\centering
\setlength\extrarowheight{1pt}
\ML{
\begin{tabular}{P{3cm}|P{4.1cm}|P{4.1cm}| P{3cm}}
 \hline
\multirow{2}{*}{Summary Statistics} & \multicolumn{3}{c}{Response Time (minutes)}  \\ \cline{2-4}  
\multirow{2}{*}{} & BM 
& R-MILP & Time Reduction\\
\hline 
5th Percentile & 5.48 & 0.80 & 85\%  \\
\hline
Average & 6.67 & 1.49 & 78\% \\ 
\hline
95th Percentile & 8.03 & 2.19 & 73\%
\\ 
\hline
\end{tabular}
\vspace{-0.04in}
\caption{\label{bm_compare} Response Times $\bar{R}$ for network BM and R-MILP ($q = 10, p = 11$) using 2018 Q2 data.}
}
\vspace{-0.172in}
\end{table}

Clearly, the network based on the \textbf{R-MILP} model is much superior to the one based on the BM model as the average response time with the \textbf{R-MILP} model is 78\% lower than with the BM model. The average response time with our model \textbf{R-MILP} is one minute and 29 seconds while the average response-time with the BM model is six minutes and 40 seconds. Similarly, the 5th and 95th response time percentiles with the \textbf{R-MILP} model are respectively 85\% and 73\% lower than those with the BM model.
The results in Table \ref{bm_compare} are unequivocal about the superiority and the advantages offered by the model constructed with our model \textbf{R-MILP}. The response time gain is crucial in view of how a few seconds can be the difference between life and death in this context.

\vspace{-0.125in}
\subsection{Computational Efficiency} \label{sub_sec_compute_e}
\vspace{-0.1in}
In this section, we conduct a battery of tests to assess the computational efficiency and tractability of the proposed reformulation and algorithms. 
Section \ref{PER_EVA} 
compares the two proposed algorithms and the direct solution of the reformulated problem with {\sc Gurobi}. Performance profile plots \cite{PROFILE} highlight the increased benefits gained with our approaches as the size of the problem and the volume of OTRs increases.
Section \ref{SENS} assess the sensitivity of the proposed method with respect to available resources.

All formulations are coded in Python 3.7 and solved with the {\sc Gurobi} 
solver on a Linux machine, with Intel Core i7-6700 CPU 3.40GHz processors and 64 GB installed physical memory.
For each instance, the optimality tolerance is 
0.01\%, 
the maximum solution time is one hour, and we use one thread only.


\vspace{-0.1in}
\subsubsection{Computational Efficiency of Scalability} \label{PER_EVA}
\vspace{-0.06in}

We evaluate here the computational efficiency and scalability of the proposed reformulations  \WMRMin{\textbf{L-MILP}, \textbf{R-MILP}} and algorithms with respect to the size of the problem, namely the number $|I|$ of OTRs to which the network must respond.
Using the data described in Section \ref{sub_data} and considering the base network scenario with up to 10 DBs and 11 drones, 
we have created ten problem types that differ in the number of OTRs ranging from 50 to 500, by increment of 50: 
$|I| \in \{50,100,150,200,250,300,350,400,450,500 \}$.  
Each instance type is identified by the tuple 
$(q,p,|I|)$ 
and we have generated five problem instances for each instance type. This gives a total of 50 instances which we solve with the following~approaches:
\vspace{-0.1in}

\begin{itemize}
\item {\tt REFO}: 
Direct solution of the \MLB{larger-size MILP problem \textbf{L-MILP} \eqref{LARGE}} with the default settings of the {\sc Gurobi} solver.
\vspace{-0.074in}
\item {\tt OA}: Outer approximation algorithm (\WMRMin{Remark \ref{rem_oa}}) with model \WMRMin{\textbf{L-MILP}}.  
\vspace{-0.074in}
\item {\tt OA-B\&C}:  
Outer Approximation branch-and-cut algorithm 
with model \WMRMin{\textbf{L-MILP}}.
\vspace{-0.074in}
\item {\tt R-OA-B\&C}:  
Outer Approximation branch-and-cut algorithm 
with model \WMRMin{\textbf{R-MILP}}.
\end{itemize}


\vspace{-0.055in}
The \ML{four} approaches are compared in terms of solution times and the number of instances for which optimality can be proven in one hour.
We note that none of the 50 problem instances formulated with the base formulation $\textbf{B-IFP}$ can be solved in one hour with the state-of-the-art solver {\sc Baron} specialized for nonconvex MINLP problems.
We use performance profile plots displayed in Figure  \eqref{fig:p_base_pp} to compare the efficiency of the solution methods. The horizontal axis indicates the running time in seconds while the vertical axis represents the number of instances solved to optimality within the corresponding running time. 
The solution time for each instance is given in Table \ref{table_compute} in Appendix \ref{APP-CEFF}. 
\begin{figure}[ht]
\begin{subfigure}[b]{0.5\textwidth}
\includegraphics[width = \textwidth, height = 7cm]{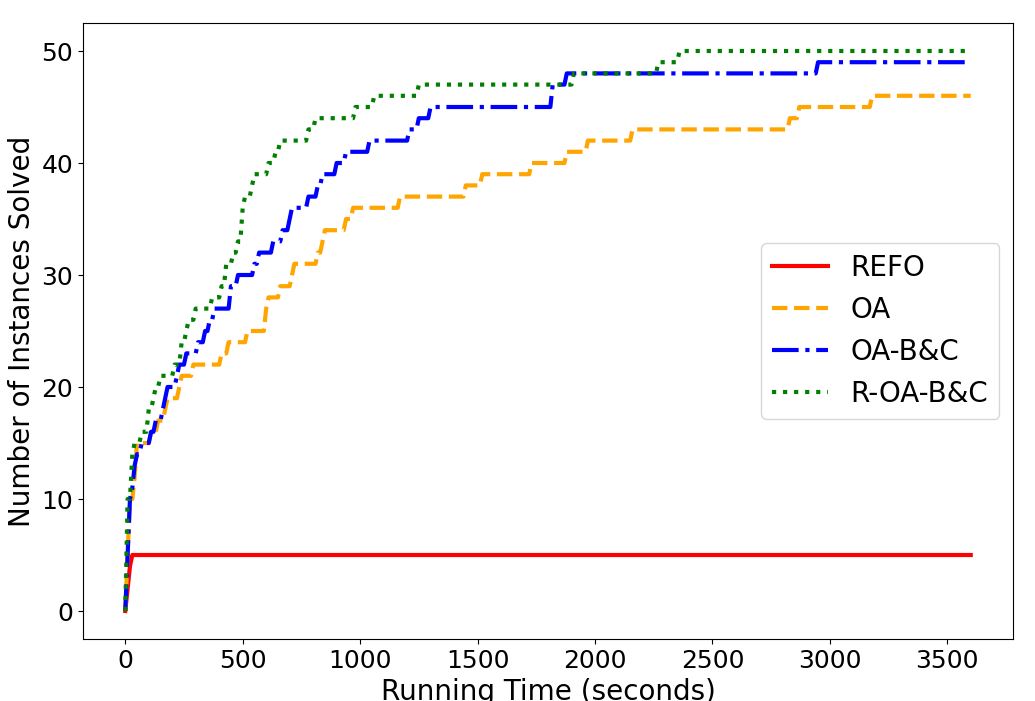} 
\caption{Performance Profile}
\label{fig:p_base_pp}
\end{subfigure}
\hfill
\begin{subfigure}[b]{0.5\textwidth}
\includegraphics[width=\textwidth, height=7cm]{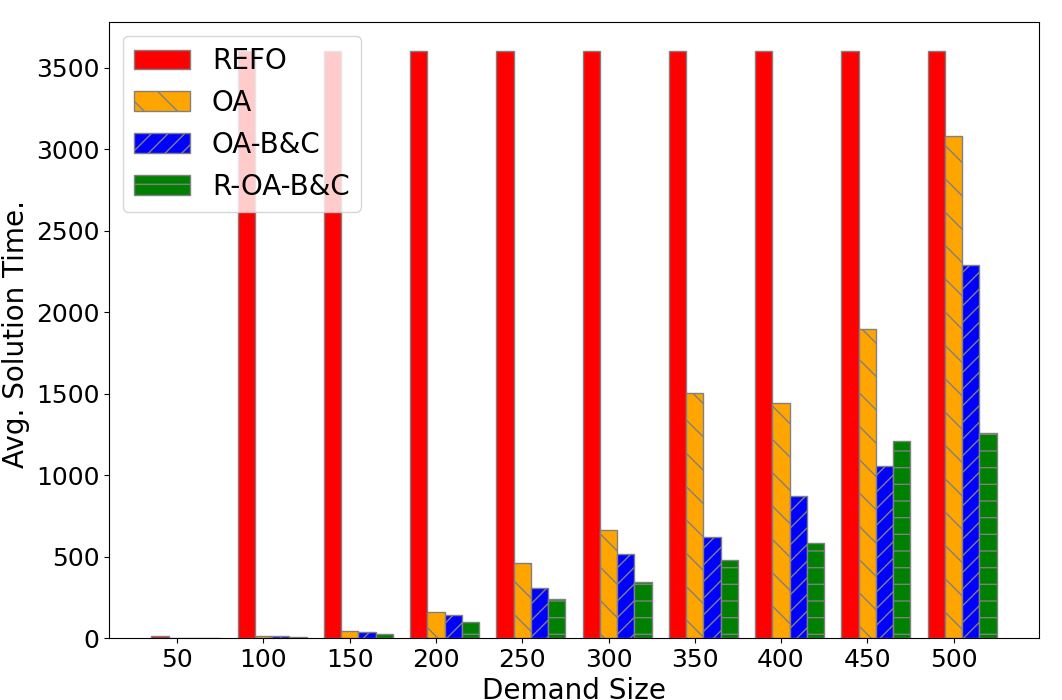}
\caption{Average Solution time}
\label{fig:p_base_avg}
\end{subfigure}
\vspace{-0.25in}
\caption{
Computational Efficiency for $(10,11, |I|)$: Performance Profile in Figure \eqref{fig:p_base_pp} and Average Solution Time by Demand Size $|I|$ in Figure \eqref{fig:p_base_avg}}
\label{fig:p_base}
\vspace{-0.1in}
\end{figure}

The performance profile shows clearly  that the proposed methods \ML{{\tt OA}, {\tt OA-B\&C} and {\tt R-OA-B\&C}} are much faster, scale much better, and allow the solution of many more instances than the direct solution method {\tt REFO}. 
While the direct solution approach {\tt REFO} only solves five (i.e. the five smallest instances with $|I|$ = 50 OTRs) of the 50 instances in one hour, \ML{{\tt OA}, {\tt OA-B\&C}, and {\tt R-OA-B\&C} solve and prove the optimality of the solution for respectively 46, 49, and 50 instances.}
Comparing now the algorithms {\tt OA} and {\tt OA-B\&C}, the dynamic incorporation of valid inequalities and optimality cuts leads to further efficiency and scalability gains. This can be seen from the {\tt OA-B\&C} line in the performance plot being above the {\tt OA} line at any time interval, thereby indicating that the {\tt OA-B\&C} method solves more instances to optimality in any amount of time.
As an illustration, Table \ref{Num_solved} in Appendix shows that {\tt OA-B\&C} solves 96\% of the instances in less than 40 minutes while {\tt OA} needs one hour to solve 92\% of the instances. 

The solution method {\tt R-OA-B\&C} based on the B\&C outer approximation method and the compact MILP reformulation \WMRMin{\textbf{R-MILP}} improves further the solution process over  
{\tt OA-B\&C}. The performance profile of {\tt R-OA-B\&C} is systematically above that of {\tt OA-B\&C}. which shows that, in any considered amount of time, {\tt R-OA-B\&C} is able to solve more instances than {\tt OA-B\&C}, and demonstrates the \MLB{computational benefits provided by the compact MILP reformulation \WMRMin{\textbf{R-MILP}}}. For instance, while {\tt OA-B\&C} solves 64\% of the instances to optimality in 600 seconds, {\tt R-OA-B\&C} does so for 78\% of them in the same time.

Figure \eqref{fig:p_base_avg} shows the average (across five instances) solution times for each instance type and associated size $|I|$. When an instance cannot be solved to optimality within one hour, the solution time is estimated to be 3600 seconds, which explains that for all instances of size $|I| \in [100,500]$ the solution time for {\tt REFO} is 3600 seconds.
Figure \eqref{fig:p_base_avg} highlights the added benefits of {\tt R-OA-B\&C} over {\tt OA-B\&C} and  {\tt OA} for larger instances. 
While {\tt R-OA-B\&C} (green bar), {\tt OA-B\&C} (blue bar), and {\tt OA} (orange bar)
perform similarly for $|I| \in [50, 300]$, {\tt R-OA-B\&C} and {\tt OA-B\&C} solve the instances of larger size $|I| \in [350, 500]$ much faster than  {\tt OA} can do. 
It can be seen that {\tt OA} and {\tt OA-B\&C} take respectively  140\% and 80\% more time than {\tt R-OA-B\&C} to solve the 500-sized instances.
\vspace{-0.08in}
\subsubsection{Sensitivity Analysis with Respect to Network Resources} 
\label{SENS} 
\vspace{-0.1in}
In this section, we evaluate the sensitivity of the solution times with respect to the resources of the network and, in particular the number of DBs ($q$) and drones ($p$). We use the algorithm {\tt \ML{R-OA-B\&C}} since it enjoys the quickest solution time as shown in Section \ref{PER_EVA}.

\vspace{-0.1in}
\begin{figure}[H]
\begin{subfigure}[b]{0.5\textwidth}
\includegraphics[width=\textwidth, height=6.7cm]{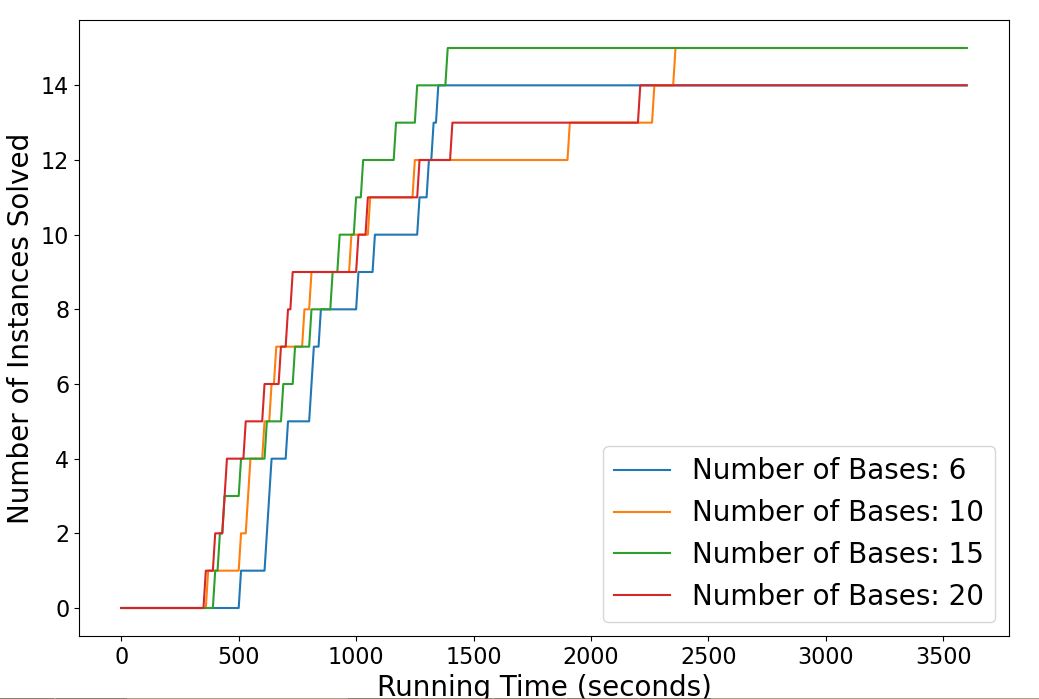}
\caption{Sensitivity to Number $q$ of DBs ($p = 11$)}
\label{fig:p_sense_base}
\end{subfigure}
\hfill
\begin{subfigure}[b]{0.5\textwidth}
\includegraphics[width=\textwidth, height=6.7cm]{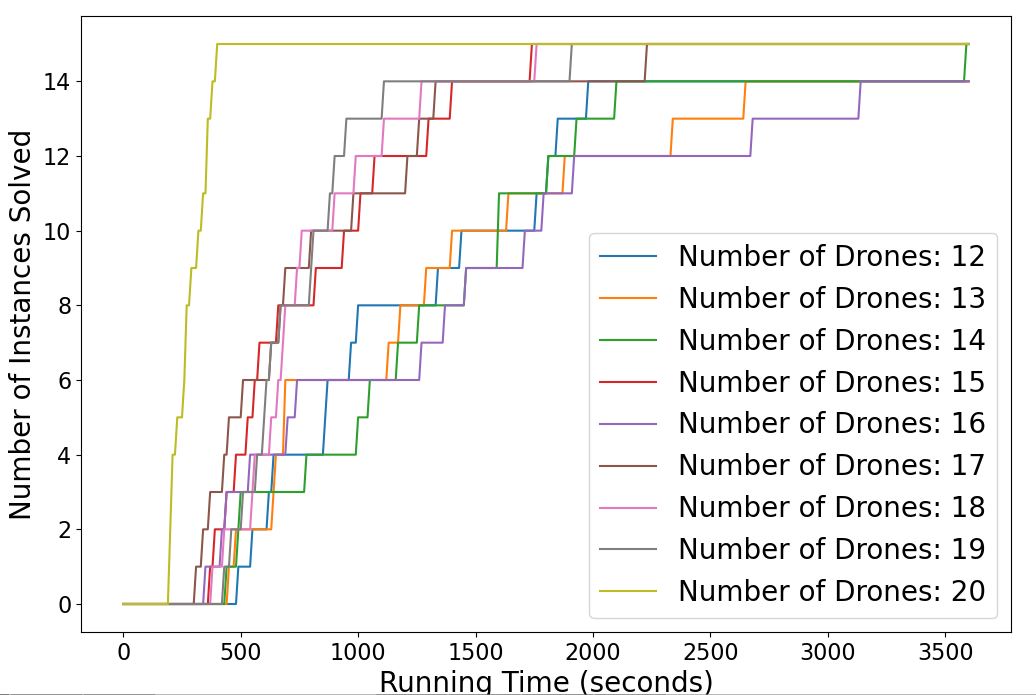}
\caption{Sensitivity to Number $p$ of Drones  ($q = 10$)}
\label{fig:p_sense_drone}
\end{subfigure}
\vspace{-0.25105in}
\caption{
Sensitivity with Respect to Number of DBs (Figure \eqref{fig:p_sense_base}) and Number of Drones (Figure \eqref{fig:p_sense_drone}}
\label{fig:p_sense}
\vspace{-0.16in}
\end{figure}

We consider four possible values $q \in \{6, 10, 15, 20 \}$ for the number of DBs and three values $|I| \in \{400, 450, 500\}$ for the demand size (number of OTRs), and generate for each combination of $q$ and $|I|$ five problem instances. We assume that $p=11$ drones are available. This gives 12 instance types $(q,11,|I|)$ for a total of 60 problem instances and we solve each of these with the \ML{{\tt R-OA-B\&C}} algorithm.
Figure \eqref{fig:p_sense_base} shows the performance profile associated to each considered number $q$ of DBs. It appears that the  computational time of the \ML{{\tt R-OA-B\&C}} method is not particularly sensitive to the number of open DBs since the performance profiles for the considered values of $q$ intersect, thereby indicating the solution time is not systematically higher (or lower) for any considered number of DBs.

We proceed similarly to evaluate the sensitivity of the computational time with respect to the number of drones. We assume that the maximum number of DBs that can be open is $q=10$. We consider nine possible number of available drones $p \in \{12, 13,14,15,16,17,18,19,20\}$, three possible demand sizes $|I| \in \{400, 450, 500\}$, and generate five instances for each of the 27 instance types of each type $(10, p, |I|)$ for a total of 135 instances \ML{that} we solve with the \ML{{\tt R-OA-B\&C}} method.
The corresponding performance profile 
in Figure \ref{fig:p_sense_drone} shows that the solution process tends to be quicker (i.e. more instances solved in a given amount of time) when the number of available drones is larger. We notice in particular that the average solution time is much lower for $p = 20$ than for any other (smaller) value assigned to $p$. 


\vspace{-0.125in}
\section{Conclusions} \label{sec_conclusion}
\vspace{-0.11in}
Opioid use affects about 2 million Americans and cost \$78.5 billion in annual health care expenses \cite{dezfulian2021opioid}. 
It has been argued that the drone-based delivery of naloxone has the “{\it potential to be a transformative innovation due to its easily deployable and flexible nature}” \cite{gao2020dynamic}.
The efficacy of naloxone 
depends on how quickly it is administered, which is a priority of the US Food and Drug Administration~\cite{ornato2020feasibility}. 

This study responds to the above pressing needs and proposes a new queueing-optimization model to design an EMS network in which drones deliver naloxone to overdose victims in a timely fashion. 
The objective is to increase the chance of survival of overdose victims by reducing the time needed to deliver and administer naloxone and is accomplished by determining the location of drones and DBs, the capacity of DBs, and the dispatching of drones to overdose incidents. Some new distinctive features in the model are its survival objective function, the explicit modeling of congestion and decision-dependent (parameter) uncertainty in the service time that affects the performance of the network, and its flexibility as the capacity of DBs is not fixed ex-ante but is instead determined by the model.

Besides modeling, the contributions of this study are threefold.
On the reformulation side, we propose a tractable MILP reformulation of the stochastic network design problem, which, in its original form, is a \ML{fractional integer} optimization problem.
We further demonstrate the generalizability of the reformulation approach and prove that the problem of minimizing the \ML{average network-wide  response time of a collection of interdependent $M/G/K$ queueing systems} in which the capacity $K$ of each system is variable is MILP-representable.  
On the algorithmic side, we \MLB{design an outer approximation branch-and cut algorithmic framework which is a significant upgrade} over the direct solution of the MILP reformulations. \MLB{The algorithm, in particular when used with the reduced-size MILP reformulation, significantly reduces the solution time, increases the number of solved instances,} 
and enables the solution of problems of larger size than those encountered in our case study. 
On the EMS practice side, the tests based on real-life data from Virginia Beach reveal that the use of drones to deliver naloxone in response to overdoses could possibly be a game-changer. 
Besides showing the out-of-sample robustness and applicability of the proposed approach, the tests attest that using the drone network: 
1) the response time is on average \ML{(1 min and 39} sec) close to seven times smaller than the one (9 min and 19 sec) with the ambulance network used in Virginia Beach; 
2) the estimated chance of survival to an overdose is more than \ML{2.73} times larger than the one with the Virginia Beach ambulance network; 
3) many more lives (of overdose victims) could be saved;
4) the total QALY per patient is 8.47 years amounting to up to \ML{279} additional per year across all overdose victims in Virginia Beach; 
5) the cost per additional QALY is very low, varying between \ML{\$257 and \$502}; and 
6) one can markedly decrease the response time and increase the survival chance as compared to using heuristics and an alternative model from the literature.

This study showcases the potential of using drones to alleviate the devastating consequences of the opioid overdose crisis and could pave the way for the practical implementation of drone-based EMS systems.
While very promising, we must however be aware of a number of obstacles for the  widespread use of drones to respond to overdoses or other time-critical medical emergencies. These barriers include, for example, regulation, flying condition and zones, safety, data privacy, operations related to the design and maintenance of a medical drone network \cite{Johnson2021impact}. 
While not the focus of this study, future research will be needed to investigate these hindrances as well as user perceptions and acceptance. 



\vspace{-0.07in}
\printbibliography
\newpage
\setcounter{page}{1}
\begin{appendices}

\ML{\section{Abbreviations}
\label{sec:abbre}
\begin{itemize}
    \item [DB:] Drone base.
    \item [EMS:] Emergency medical services.
    \item [OHCA:] Out-of-hospital cardiac arrest.
    \item [OTR:] Overdose-triggered request.
    \item [QALY:] Quality-adjusted life year.
\end{itemize}

\section{Notations}
\label{sec:notations}
\addcontentsline{toc}{section}{Appendix A. Notations}

\subsection{Sets and Indices}
\begin{itemize}
	\item[$i \in I$:] Index and set of OTR locations.
	\item[$j \in J$:] Index and set of candidate DBs.
	\item[$i \in I_j$:] Index and set of OTRs that are within the catchment area of a drone at DB $j$. 
	\item[$j \in J_i$:] 
	Index and set of DBs that can cover OTR $i$.

    	\item[$m \in \{1,\ldots, M\}$:] 
	Index and set of drones.

\end{itemize}
\vspace{-0.1in}
\subsection{Parameters and Constants}
\begin{itemize}
	\item[ $A_j$:] Vector of parameters $(a_j, b_j, c_j)$ representing the coordinates of DB $j$ in the earth-centered, earth-fixed coordinate system.
	\item [$A_i$:] Vector of parameters $(a_i, b_i, c_i)$ representing the location of OTR $i$ in the earth-centered, earth-fixed coordinate system.
	\item[$M$:] Maximum number of drones that can be stationed at any DB.
	
	\item[$d_{ij}$:] Distance between OTR $i$ and candidate DB $j$.
	\item[$v$:] Flight speed of drones.
	\item[$\lambda_i$:] OTR arrival (occurrence) rate from  at  $i$.

	\item[$p$:] Maximal number of available drones.
	\item[$q$:] Maximal number of drone bases that can can be established. 
	\item[$\beta$:] Coefficient modulating the travel speed to and from an OTR location. 
	\item[$r$:] Flight radius of a drone.
	
\end{itemize}
\subsection{Decision Variables}
\begin{itemize}
	\item[$x_j$:] Binary variable determining if a DB is open at candidate location $j$ ($x_j = 1$) or not ($x_j = 0$).
	\item[$y_{ij}$:] Binary variable defining if  OTR at location $i$ is assigned to open DB $j$ ($y_{ij} = 1$) or not ($y_{ij} = 1$).
    \item[$\gamma_j^{\WMR{(m)}}$:] Binary variable indicating the number $m$ of drones deployed at  DB $j$: $\gamma_j^{\WMR{(m)}} = 1$ if and only if $m$ drones are positioned at DB $j$.
    \item[$K_j$:] General integer variable defining the number of drones stationed at DB $j$.
    \item[$\eta_j$:] Arrival rate of OTRs serviced by DB $j$.
    \item[$U_j^{\WMR{(m)}}$:] Auxiliary variable introduced to reformulate the fractional terms of the objective function. 
    \item[$\mu_{ij}^{\WMR{(m)}}$:] Auxiliary variable introduced to linearize the bilinear term $U_j^{\WMR{(m)}} y_{ij}$.
    \item[$z_{jit}^{}$:] Auxiliary variable introduced to linearize the bilinear term $y_{ij}y_{lj}$.
    \item[$\tau_{jil}^{}$:] Auxiliary variable introduced to linearize the bilinear term $U_{j}^{\WMR{(2)}}z_{jil}^{}$.
    \item[$\omega_{ij}^{\WMR{(m)}}$:] Auxiliary variable introduced to linearize the bilinear term $\gamma_j^{\WMR{(m)}}\mu_{ij}^{\WMR{(m)}}$.
\end{itemize}

\subsection{Random Variables} \label{nota_rv}
\begin{itemize}
	\item[$S_{ij}$:] Service time for OTR at $i$ if serviced with a drone stationed at DB $j$.
	\item[$S_{j}$:] Service time of a drone at DB $j$.
	\item[$Q_{j}$:] Queueing delay time for DB $j$.
	\item[$R_{i}$:] Response time between reception of the delivery request and arrival of a drone for an OTR at $i$.
	\item[$\alpha_i$:] On-scene service time (e.g. naloxone toolkit unloading time) for location $i$.
	\item[$\epsilon_i$:] Drone reset time needed to recharge and load new naloxone toolkit for location $i$.
	\item[$\xi_i$:] Non-travel drone service time equal to $\alpha_i + \epsilon_i$ for OTR at location $i$.
 
\end{itemize}
}

\vspace{0.1in}
\section{Proofs}\label{PRO}
\subsection{Proof of Proposition \ref{thm_avg_resp}} \label{AVE-PR1}

{\it \underline{\bf{Proposition \ref{thm_avg_resp}}}:
The functional form of the average response time is a fractional expression with nonlinear numerator and denominator:
\begin{equation}
\bar{R} = \sum_{i \in I} \sum_{j \in J_i} \left[ \frac{\eta_j^{K_j} \mathbb{E}[S_j^2] \mathbb{E}[S_j]^{K_j-1}}{2(K_j-1)! (K_j-\eta_j \mathbb{E}[S_j])^2 \big[ \sum_{n = 0}^{K_j-1} \frac{(\eta_j \mathbb{E}[S_j])^n}{n!} + \frac{(\eta_j \mathbb{E}[S_j])^{K_j}}{(K_j - 1)!(K_j - \eta_j \mathbb{E}[S_j]}\big]} + \frac{d_{ij}}{v}\right]\frac{\lambda_i y_{ij}}{\sum_{l \in I} \lambda_l} \ . \notag
\end{equation}
}

\noindent
\begin{proof} 
The average response time is the weighted average of the expected response times for each OTR $i$. Therefore, we have: 
\begin{equation}
\label{R1}
\bar{R} = \sum_{i \in I} \frac{\lambda_i}{\sum_{l \in I} \lambda_l} \mathbb{E}[R_i]    
\end{equation}
Using the definition \eqref{e_r_i} of $\mathbb{E}[R_i]$,  \eqref{R1} becomes:
\begin{align}
\bar{R} \; = \; \sum_{i \in I} \frac{\lambda_i}{\sum_{l \in I} \lambda_l} \sum_{j \in J_i} \left(\mathbb{E}[Q_j] + \frac{d_{ij}}{v}\right)y_{ij} \; 
= \; \sum_{i \in I} \sum_{j \in J_i} \label{r_bar_q_j}   \left(\mathbb{E}[Q_j] + \frac{d_{ij}}{v}\right) \frac{\lambda_i y_{ij}}{\sum_{l \in I} \lambda_l} 
\end{align}
Now expanding $\mathbb{E}[Q_j]$ using  \eqref{e_q_j}, we obtain the expression given in Proposition \ref{thm_avg_resp}.
\hfill$\Box$ 
\end{proof}

\subsection{Proof of Remark \ref{PROP1}} \label{REM1}
{\it \underline{\bf{Remark \ref{PROP1}}}: 
Problem $\mathbf{B-IFP}$ is a \ML{nonconvex} 
optimization problem in which:
\newline 
(i) The objective function is neither convex, nor concave: the denominator and the numerator in each ratio term of the objective function are nonconvex functions; 
(ii) The ratio terms can involve division by 0 and be indeterminate;
(iii) Any ratio term related to a 
location where no DB is set up in undefined;
(iv) There is a mix of binary and bounded general integer variables;
(v) The continuous relaxation of $\mathbf{B-IFP}$ is a nonconvex~problem. 
}

\noindent
\begin{proof} 
\newline
(i) The  numerator and denominator include polynomial and exponential terms. 
The denominator has also factorial and fractional terms. Both are nonconvex functions. 
It can be easily shown that the hessian matrix of the fractional objective function is not positive semidefinite, nor negative semidefinite.
\newline
(ii) Any term $(K_j- \sum_{l \in I_j} \lambda_l y_{lj}\mathbb{E}[S_{lj}])$ can be equal to 0, which leads to a division by 0, if 
no DB is set up at location $j$, thereby implying $K_j=y_{ij}=0, \forall i \in J_i$.
\newline
(iii) If no DB is set up at the potential location $j$, no drone can be stationed at $j$, which implies that $K_j=0$. In that case, the denominator includes a term involving taking the factorial of a negative number:  
$(K_j-1)! = (-1)!$ 
\newline
(iv) Obvious: see constraints \eqref{binary}-\eqref{k_j_integer}.
\newline
(v) See part (i).
\end{proof}
\newpage
\subsection{Proof of Proposition \ref{T1}} \label{ATH40}

{\it \underline{\bf{Proposition \ref{T1}}}:
Let $\gamma_{j}^{\WMR{(m)}} \in \{0, 1\},  j \in J, m=1,\ldots,M$.  
The fractional nonlinear binary~problem 
\vspace{-0.075in}
\begin{subequations}
\begin{align}
\mathbf{R-BFP}: & \; \min	    
\sum_{i \in I} \sum_{j \in J_i} \frac{y_{ij}d_{ij} \lambda_i}{v \sum_{l \in I} \lambda_l} \; + \; \sum_{i \in I} \sum_{j \in J_i} \sum_{m=1}^{M} \ \frac{y_{ij}\lambda_i}{\sum_{l \in I} \lambda_l} 
\notag \\ 
& \hspace{-0.8in} 
\left[ \frac{\gamma_j^{\WMR{(m)}} \sum_{l \in I_j} \lambda_l y_{lj} \mathbb{E}[S_{lj}^2] (\sum_{l \in I_j} \lambda_l y_{lj} \mathbb{E}[S_{lj}])^{m-1}}{2(m-1)! (m-\sum_{l \in I_j} \lambda_l y_{lj} \mathbb{E}[S_{lj}])^2 \big[ \sum_{n = 0}^{m-1} \frac{(\sum_{l \in I_j} \lambda_l y_{lj} \mathbb{E}[S_{lj}])^n}{n!} + \frac{(\sum_{l \in I_j} \lambda_l y_{lj} \mathbb{E}[S_{lj}])^{m}}{(m - 1)!(m - \sum_{l \in I_j} \lambda_l y_{lj} \mathbb{E}[S_{lj}]}\big]} \right] \notag\\ 
\text{s.to} \; & \eqref{assignment}- \eqref{eq_q} ; \eqref{binary} \notag \\ 
& \sum_{i \in I_j} \lambda_i y_{ij}\mathbb{E}[S_{ij}]   \le \sum_{m=1}^{M} m  \gamma_j^{\WMR{(m)}} - \epsilon , \ \ j \in J \notag \\ 
& x_j \le \sum\limits_{m=1}^{M}  m  \gamma_j^{\WMR{(m)}} \le M x_j, \ j \in J \notag \\ 
& \sum\limits_{j \in J}\sum\limits_{m=1}^{M} m\gamma_j^{\WMR{(m)}} = p \notag \\ 
&\sum\limits_{m=1}^{M} \gamma_j^{\WMR{(m)}} \leq 1 \ , \ j \in J  \notag \\ 
& \gamma_j^{\WMR{(m)}} \in \{0,1\}, \ \ j \in J, m =1, \ldots, M \notag  
\end{align}
\end{subequations}
is equivalent to the fractional nonlinear integer problem  $\mathbf{B-IFP}$. }

\noindent
\begin{proof} 
Part (i): 
We first replace each bounded general integer variable $K_j$ by a weighted sum of $M$ binary variables $\gamma_j^{\WMR{(m)}}, m=1,\ldots,M$ in the constraints.  
For this 
substitution to work, \eqref{SUB1} 
must hold: 
\vspace{-0.1in}
\begin{equation}
\label{SUB1}
K_j := \sum\limits_{m=1}^{M} m  \gamma_j^{\WMR{(m)}}  \ . \end{equation}
This is accomplished in two steps. First, we introduce the linear constraints 
\vspace{-0.1in}
\begin{subequations}\label{SUBSTI}
\begin{align}
\label{SUB2}
\sum\limits_{m=1}^{M} \gamma_j^{\WMR{(m)}} \leq 1 \ &   \\
\label{SUB3}
\gamma_j^{\WMR{(m)}} \in \{0,1\}  \ &  \quad m=1,\ldots,M  
\vspace{-0.2in}
\end{align}
\end{subequations}
for each $j \in J$ (see \eqref{NEW1} and \eqref{binary2}) to ensure that
$\sum_{m=1}^{M} m  \gamma_j^{\WMR{(m)}}$ can take any integer value in $[0, M]$ which is the restriction imposed on each $K_j$ via \eqref{open_drone} and \eqref{k_j_integer}. Accordingly, we substitute \eqref{NEW1}-\eqref{binary2} for \eqref{k_j_integer}
and then replace $K_j$ by the right-side term of \eqref{SUB1} in \eqref{steady-state}, \eqref{open_drone}, and \eqref{eq_p}. 
We divide both sides of \eqref{steady-state} by $\sum_{i \in I_j} \lambda_i y_{ij}$ and substract the infinitesimal positive constant $\epsilon$ from the right side.
The constraints \eqref{steady-state}, \eqref{open_drone}, and \eqref{eq_p} can then be replaced by \eqref{steady-state-2}, \eqref{open_drone-2}, and \eqref{eq_p-2} in the constraint set of $\mathbf{R-BFP}$. 
This gives the mixed-integer feasible set given in the statement of Proposition \ref{T1}.

\noindent
Part (ii): We now remove the general integer variables $K_j$ from the objective function \eqref{D1_obj}. 
In each 
term 
of the objective, we first replace the general integer variable $K_j$ by the index parameter $m$, which~gives:

\begin{equation}
\label{INTERM1}
\frac{\sum_{l \in I_j} \lambda_l y_{lj} \mathbb{E}[S_{lj}^2] (\sum_{l \in I_j} \lambda_l y_{lj} \mathbb{E}[S_{lj}])^{m-1}}{2(m-1)! (m-\sum_{l \in I_j} \lambda_l y_{lj} \mathbb{E}[S_{lj}])^2 \big[ \sum_{n = 0}^{m-1} \frac{(\sum_{l \in I_j} \lambda_l y_{lj} \mathbb{E}[S_{lj}])^n}{n!} + \frac{(\sum_{l \in I_j} \lambda_l y_{lj} \mathbb{E}[S_{lj}])^{m}}{(m - 1)!(m - \sum_{l \in I_j} \lambda_l y_{lj} \mathbb{E}[S_{lj}]}\big]}
\vspace{-0.05in}
\end{equation}
\noindent
Next, we multiply each expression \eqref{INTERM1} by the corresponding binary variable $\gamma_j^{\WMR{(m)}}$ and we sum  the $m$ resulting ratio terms (i.e., $\gamma_j^{\WMR{(m)}} \times \eqref{INTERM1}$), which gives
\begin{equation}
\label{INTERM2}
\hspace{-0.2cm}
\sum_{m=1}^{M}
\frac{\gamma_j^{\WMR{(m)}} \sum_{l \in I_j} \lambda_l y_{lj} \mathbb{E}[S_{lj}^2] (\sum_{l \in I_j} \lambda_l y_{lj} \mathbb{E}[S_{lj}])^{m-1}}{2(m-1)! (m-\sum_{l \in I_j} \lambda_l y_{lj} \mathbb{E}[S_{lj}])^2 \big[ \sum_{n = 0}^{m-1} \frac{(\sum_{l \in I_j} \lambda_l y_{lj} \mathbb{E}[S_{lj}])^n}{n!} + \frac{(\sum_{l \in I_j} \lambda_l y_{lj} \mathbb{E}[S_{lj}])^{m}}{(m-1)!(m-\sum_{l \in I_j}\lambda_l y_{lj} \mathbb{E}[S_{lj}]}\big]}\ .
\vspace{-0.025in}
\end{equation}
with $m$ ($m=1,\ldots,M$) denoting the possible number of drones positioned at any open DB. 

\noindent
Due to \eqref{NEW1}, at most one binary variable $\gamma_j^{\WMR{(m)}}$ for each $j\in J$ takes a non-zero value (i.e., 1) and, therefore, at most one term in the summation is \eqref{INTERM2} is non-zero.
This, combined with the fact that the feasible set of $\mathbf{R-BFP}$  ensures that \eqref{SUB1} holds true for each $j$ (see Part i)), implies that \eqref{INTERM2} is equivalent to the expression in the second line of the objective function \eqref{D1_obj} of {\bf B-IFP}, as shown next. 

\noindent
Two cases are now to be considered.
First, if for any arbitrary $j \in J$, we have
$K_j=0$, each $\gamma_j^{\WMR{(m)}}, m=1,\ldots,M$ is equal to 0 since the constraints in $\mathbf{R-BFP}$ imply \eqref{SUB1}, and 
\eqref{INTERM2} is equal to 0.
Second, if $K_j\neq 0$ with $K_j \leq M$, then exactly one of the $\gamma_j^{\WMR{(m)}}, m=1,\ldots,M$ is equal to 1. More precisely, due to \eqref{SUB1}, we have 
$\gamma_j^{\WMR{(K_j)}}=1$ and 
$\gamma_j^{\WMR{(m)}}=0, m=1,\ldots,M, m \neq K_j$.
This means that the only term in \eqref{INTERM2} taking a non-zero value is the one for which $m=K_j$, which in turn implies that each term $m$ in \eqref{INTERM2} is equal to the expression in the second line of \eqref{D1_obj}, which provides the result that we set out to prove.
\hfill$\Box$
\end{proof}
\subsection{Proof of Theorem \ref{T2}} \label{ATH41}
\vspace{-0.1in}
{\it \underline{\bf{Theorem \ref{T2}}}:
Define the index sets 
$D_j = \{(l,t): l,t \in I_j, l <t \}, j\in J$. 
Let 
$z_{jlt} \in [0,1]$,
$\mu^{(m)}_{ij} \in [0,\bar{U}_j^{(m)}]$, 
$\tau_{jlt} \in [0,\bar{U}^{(2)}_j]$, and
$\omega^{(m)}_{ij} \in [0,\bar{\mu}_{ij}^{(m)}]$
$i,l,t \in I_j, (l,t) \in D_j, j\in J, m=1,\ldots,M$
be continuous auxiliary variables used for the linearizization of bilinear terms.
The MILP problem $\mathbf{R-MILP}$ 
\begin{subequations}
\begin{align}
\min & \ \sum_{i \in I} \sum_{j \in J_i} \frac{y_{ij}d_{ij} \lambda_i}{v \sum_{l \in I} \lambda_l} 
+  \sum_{i \in I} \sum_{j \in J_i} 
\Bigg[ \frac{\omega_{ij}^{(1)}}{2} + \frac{\omega_{ij}^{\WMR{(2)}}}{2} \Bigg] 
\frac{\lambda_i}{\sum_{l \in I} \lambda_l} & \notag \\ 
\text{s.to} \ & (x,y,\gamma) \in \mathcal{B} & \notag \\ 
& U_j^{(1)} = \sum_{l \in I_j}\lambda_l \mu^{(1)}_{lj} \tilde{S}_{lj} + \sum_{l \in I_j}\lambda_l y_{lj} \tilde{S}_{lj}^{2} & j \in J  \notag \\ 
&4U^{(2)}_{j} = 
\sum_{l \in I_j} \lambda_l^2 \tilde{S}_{lj} (\mu^{(2)}_{lj}\tilde{S}_{lj} + y_{lj}  \tilde{S}_{lj}^2) 
+ 2 \sum_{(l,
t) \in D_j} \lambda_l \lambda_t \tilde{S}_{tj} 
(z_{jlt} \tilde{S}_{lj}^{2} +  \tau_{jlt}^{}\tilde{S}_{lj} )
& j \in J \label{V} \notag \\ 
&(y_{lj},y_{tj},z_{jlt}) \in \mathcal{F}_y^z & (l,t) \in D_j, j \in J   \notag \\  
&(y_{lj},U^{(m)}_j,\mu^{(m)}_{lj}) \in  \mathcal{M}_{yU}^{\mu} & \hspace{-2cm} l \in I_j, j \in J, {m}=1,\ldots,M \notag \\ 
&(z_{jlt},U^{(2)}_\ML{j},\tau_{jlt}) \in \mathcal{M}_{zU}^{\tau}& (l,t) \in D_j, j \in J   \notag \\ 
&\MLB{(\gamma^{(1)}_j,\mu^{(1)}_{ij}, \omega^{(1)}_{ij}) \in  \mathcal{M}_{\ \gamma\mu}^{'\omega}}& \MLB{i \in I, j \in J_i} \notag 
\end{align}
\end{subequations}
is equivalent to the nonlinear integer problems $\mathbf{B-IFP}$ and 
$\mathbf{R-BFP}$ for $M$=2.
}

\noindent
\begin{proof}
\MLB{{\bf (i) Linearization}:} 
For $M = 2$, the fractional terms 
\begin{equation}
\label{FT1}
\sum_{m=1}^{M}
\left[ \frac{\gamma_j^{\WMR{(m)}} \sum_{l \in I_j} \lambda_l y_{lj} \tilde{S}_{lj}^2 (\sum_{l \in I_j} \lambda_l y_{lj} \tilde{S}_{lj})^{m-1}}{2(m-1)! (m-\sum_{l \in I_j} \lambda_l y_{lj} \tilde{S}_{lj})^2 \big[ \sum_{n = 0}^{m-1} \frac{(\sum_{l \in I_j} \lambda_l y_{lj} \tilde{S}_{lj})^n}{n!} + \frac{(\sum_{l \in I_j} \lambda_l y_{lj} \tilde{S}_{lj})^{m}}{(m - 1)!(m - \sum_{l \in I_j} \lambda_l y_{lj} \tilde{S}_{lj}}\big]} \right]
\end{equation}
in \eqref{OBJ2} can be rewritten as 
\begin{align*}
&\frac{1}{2}
\Bigg[
\frac{\gamma_j^{\WMR{(1)}}\sum_{l \in I_j}\lambda_l y_{lj}\tilde{S}_{lj}^{2}}{(1 - \sum_{l \in I_j}\lambda_l y_{lj}\tilde{S}_{lj})} +
\\
& \frac{ \gamma_j^{\WMR{(2)}} \sum_{l \in I_j} \lambda_{l}y_{lj}\tilde{S}_{lj}^2   \sum_{l \in I_j} \lambda_{l}y_{lj}\tilde{S}_{lj}}
{(2- \sum\limits_{l \in I_j} \lambda_{l}y_{lj}\tilde{S}_{lj})^2 + (2- \sum\limits_{l \in I_j} \lambda_{l}y_{lj}\tilde{S}_{lj})^2 
\sum\limits_{l \in I_j} \lambda_{l}y_{lj}\tilde{S}_{lj}
 + (2- \sum\limits_{l \in I_j} \lambda_{l}y_{lj}\tilde{S}_{lj})( \sum\limits_{l \in I_j} \lambda_{l}y_{lj}\tilde{S}_{lj})^{2}} 
 \Bigg]
\end{align*}
In order to remove the fractional terms from the objective function, we introduce the nonnegative auxiliary variables $U_j^{(m)}, m=1,2$ defined as:

{\small
\begin{align}
U_j^{(1)}  &= \frac{\sum_{l \in I_j}\lambda_l y_{lj}\tilde{S}_{lj}^{2}}{1 - \sum_{l \in I_j}\lambda_l y_{lj}\tilde{S}_{lj}} \label{1drone_cons} \\
U_j^{(2)} &= \frac{ \sum_{l \in I_j} \lambda_{l}y_{lj}\tilde{S}_{lj}^2   \sum\limits_{l \in I_j} \lambda_{l}y_{lj}\tilde{S}_{lj}}{(2- \sum\limits_{l \in I_j} \lambda_{l}y_{lj}\tilde{S}_{lj})^2  + (2- \sum\limits_{l \in I_j} \lambda_{l}y_{lj}\tilde{S}_{lj})^2 
\sum\limits_{l \in I_j} \lambda_{l}y_{lj}\tilde{S}_{lj}
 + (2- \sum\limits_{l \in I_j} \lambda_{l}y_{lj}\tilde{S}_{lj}) (\sum\limits_{l\in I_j} \lambda_{l}y_{lj}\tilde{S}_{lj}
)^{2}} \label{2drone_cons}
\end{align} 
}
Problem {\bf R-BFP} can now be equivalently rewritten as: 
\begin{align}
\min & \sum_{i \in I} \sum_{j \in J_i} \frac{y_{ij}d_{ij} \lambda_i}{v \sum_{l \in I} \lambda_l} 
+  \sum_{i \in I} \sum_{j \in J_i} \Bigg[ \frac{\gamma_j^{(1)} y_{ij}U_j^{(1)}}{2} 
+ \frac{\gamma_j^{(2)} y_{ij}U_j^{(2)}}{2} \Bigg] 
\frac{\lambda_i}{\sum_{l \in I} \lambda_l} \label{OBJ1}\\ 
\; \text{s.to} \ &  
\eqref{1drone_cons} - \eqref{2drone_cons} \notag \\ 
& \ (x,y,\gamma) \in \mathcal{B} \notag
\end{align}
The second step is the linearization of the nonconvex equality constraints \eqref{1drone_cons} and \eqref{2drone_cons} and the~polynomial terms of the objective function \eqref{OBJ1}. 
Multiplying both sides of \eqref{1drone_cons} by 
$(1 - \sum_{l \in I_j}\lambda_l y_{lj}\tilde{S}_{lj})$ gives  
\begin{equation}
U_j^{(1)}  = \sum_{l \in I_j}\lambda_l y_{lj} U^{(1)}_{j} \tilde{S}_{lj} + \sum_{l \in I_j} \lambda_l y_{lj} \tilde{S}_{lj}^{2} \; , \; j \in J \label{multi1}
\vspace{-0.12in}
\end{equation} 
which can be linearized as \eqref{U} by introducing the linearization auxiliary variables $\mu^{(1)}_{lj}$ and the McCormick inequalities 
in the set $\mathcal{M}_{yU}^{\mu}$ to ensure that: 
$\mu^{\WMR{(1)}}_{lj} = y_{lj} U^{\WMR{(1)}}_{j}, l\in I_j, j\in J$.

\noindent
Similarly, multiplying both sides of \eqref{2drone_cons} by 
$$(2- \sum_{l \in I_j} \lambda_{l}y_{lj}\tilde{S}_{lj})^2  + (2- \sum_{l \in I_j} \lambda_{l}y_{lj}\tilde{S}_{lj})^2 
\sum_{l \in I_j} \lambda_{l}y_{lj}\tilde{S}_{lj}
 + (2- \sum_{l \in I_j} \lambda_{l}y_{lj}\tilde{S}_{lj})( \sum_{l \in I_j} \lambda_{l}y_{lj}\tilde{S}_{lj}
)^{2}$$ 
\vspace{-0.455in}
gives
\begin{align} 
\vspace{-0.1in}
& \; U_j^{\WMR{(2)}}   \left((2-\sum_{l \in I_j} \lambda_{l}y_{lj}\tilde{S}_{lj})^2+(2-\sum_{l \in I_j} \lambda_{l}y_{lj}\tilde{S}_{lj})^2 \sum_{l \in I_j} \lambda_{l}y_{lj}\tilde{S}_{lj} + (2-\sum_{l \in I_j} \lambda_{l}y_{lj}\tilde{S}_{lj})( \sum_{l \in I_j} \lambda_{l}y_{lj}\tilde{S}_{lj} )^{2} \right) \notag \\
 = & \; \sum_{l \in I_j} \lambda_{l}y_{lj}\tilde{S}_{lj}^2   \sum_{l \in I_j} \lambda_{l}y_{lj}\tilde{S}_{lj} \ .
 \label{V_mul_denomi}
\end{align}
The left-hand side of \eqref{V_mul_denomi} can be rewritten as 
\begin{subequations}
\label{SIMPL}
\begin{align} & \; 
U_j^{\WMR{(2)}} \Bigg((2- \sum_{l \in I_j} \lambda_{l}y_{lj}\tilde{S}_{lj})
\times
\left[
2- \sum_{l \in I_j} \lambda_{l}y_{lj}\tilde{S}_{lj}
+ (2- \sum_{l \in I_j} \lambda_{l}y_{lj}\tilde{S}_{lj}) 
\sum_{l \in I_j} \lambda_{l}y_{lj}\tilde{S}_{lj}
 + ( \sum_{l \in I_j} \lambda_{l}y_{lj}\tilde{S}_{lj})^{2}
\right] \Bigg)  \\
  = & \; U_j^{(2)} \Bigg(
(2- \sum_{l \in I_j} \lambda_{l}y_{lj}\tilde{S}_{lj})
(2 + \sum_{l \in I_j} \lambda_{l}y_{lj}\tilde{S}_{lj}) \Bigg)   
\; = \; 
U_j^{(2)} \Bigg(
4- \sum_{l \in I_j}  \sum_{t \in I_j} \lambda_{l} \lambda_{t} y_{lj} y_{tj} \tilde{S}_{lj} \tilde{S}_{tj} \Bigg) \ .
\end{align} 
\label{V_LHS} 
\end{subequations}
The double summation expression in \eqref{V_LHS} can be simplified. 
First, using the idempotent identity of binary variables, we have:  
$(y_{lj})^2 = y_{lj}$.
Second, we split the summands in \eqref{V_LHS} into two parts including respectively the separable bilinear terms $(y_{lj})^2$ and the nonseparable ones $y_{lj}y_{tj}, l \neq t$: 
\vspace{-0.05in}
\begin{equation}
\sum_{l \in I_j} \sum_{t \in I_j} \lambda_{l} \lambda_{t} y_{lj} y_{tj} \tilde{S}_{lj} \tilde{S}_{tj} = 
\sum_{l \in I_j} \lambda_l^2 y_{lj} (\tilde{S}_{lj})^{2}  + 2\sum_{(l,t) \in D_j} \lambda_l \lambda_t y_{lj} y_{tj}\tilde{S}_{lj} \tilde{S}_{tj} \ .
\label{SIMPLI}
\end{equation}
Combining \eqref{SIMPL} and \eqref{SIMPLI}, we reformulate the left side of \eqref{V_mul_denomi} as:
\begin{equation}
U_j^{\WMR{(2)}} \Big(
4- \sum_{l \in I_j} \lambda_l^2 y_{lj} (\tilde{S}_{lj})^{2}  - 2\sum_{(l,
t) \in D_j} \lambda_l \lambda_t y_{lj} y_{tj}\tilde{S}_{lj} \tilde{S}_{tj}
\Big)
\label{SIMPLI2}
\end{equation}
Using the same reasoning, the right-hand side of \eqref{V_mul_denomi} is equal to
$$
\sum_{l \in I_j} \lambda_l^2 y_{lj} \tilde{S}_{lj}^{2} \tilde{S}_{lj} +2\sum_{(l,
t) \in D_j} \lambda_l \lambda_t y_{lj} y_{tj}\tilde{S}_{lj}^{2} \tilde{S}_{tj} \ .
$$
Thus, \eqref{2drone_cons} and \eqref{V_mul_denomi} are equivalent to
{\small
\begin{align}
U_j^{\WMR{(2)}}
\Big(4 - \sum_{l \in I_j} \lambda_l^2 y_{lj} (\tilde{S}_{lj})^{2} - 2\sum_{(l,t) \in D_j} \lambda_{l} \lambda_{t} y_{lj} y_{tj}\tilde{S}_{lj} \tilde{S}_{tj}\Big) 
= \sum_{l \in I_j} \lambda_l^2 y_{lj} \tilde{S}_{lj}^{2} \tilde{S}_{lj} + 2\sum_{(l,
t) \in D_j} \lambda_l \lambda_t y_{lj} y_{tj}\tilde{S}_{lj}^{2} \tilde{S}_{tj} 
 \label{2cons_r1}
\vspace{-0.31in}
\end{align}
}
which defines a nonlinear equality constraint including bilinear terms $U_j^{\WMR{(2)}} y_{lj}$, $y_{lj} y_{tj}$ and trilinear terms $U_j^{\WMR{(2)}} y_{lj} y_{tj}$ and has therefore a nonconvex feasible area. 

Next, we linearize the binary bilinear term $y_{lj}y_{tj}$ by introducing the variables  $z_{jlt}$ and the linear inequalities 
in the set $\mathcal{F}_{y}^z$ which ensures that $z_{jlt}:= y_{lj}y_{tj}$ and gives the equality:

{\small 
 \begin{align} \label{2cons_R2}
    4U_j^{\WMR{(2)}} - \sum_{l \in I_j} \lambda_l^2 U_j^{\WMR{(2)}} y_{lj} (\tilde{S}_{lj})^{2} - 2\sum_{(l,t) \in D_j} \lambda_{l} \lambda_{t} U^{\WMR{(2)}}_{j}z_{lt}^{j}\tilde{S}_{lj} \tilde{S}_{tj} &= \sum_{l \in I_j} \lambda_l^2 y_{lj} \tilde{S}_{lj}^{2} \tilde{S}_{lj} + 2\sum_{(l,t) \in D_j} \lambda_l \lambda_t z_{jlt} \tilde{S}_{lj}^{2} \tilde{S}_{tj}\ ,\ j\in J 
\end{align}
}
To linearize the remaining bilinear terms $U_j^{\WMR{(2)}} y_{lj}$ and $U_j^{\WMR{(2)}} z_{jlt}$ in \eqref{2cons_R2}, we respectively use the inequalities 
in $\mathcal{M}^{\mu}_{yU}$ and $\mathcal{M}^{\tau}_{Uz}$ 
to ensure $\mu^{\WMR{(2)}}_{lj}:= U_j^{\WMR{(2)}} y_{lj}$ and $\WMR{\tau_{jlt}}:= U_j^{\WMR{(2)}} z_{jlt}$, 
which gives us, in fine, a mixed-integer linear feasible set equivalent to \eqref{2drone_cons}. 

Having linearized the bilinear terms in \eqref{1drone_cons} and \eqref{2drone_cons}, we do the same for the trilinear terms $\gamma_j^{(1)} y_{ij}U_j^{(1)}$ and  
$\gamma_j^{(2)} y_{ij}U_j^{(2)}$ in the objective function \eqref{OBJ1}. 
First, since $\mu^{(m)}_{ij} = U_{j}^{(m)} y_{ij}$, the trilinear terms can be reduced to the bilinear terms $\gamma_j^{(1)} \mu^{(1)}_{ij}$ and $\gamma_j^{(2)} \mu^{(2)}_{ij}$. 
\MLB{Introducing the variables $\omega_{ij}^{(m)}$ and the set of inequalities 
\eqref{OLD-R-MILP}
$(\gamma^{(m)}_j,\mu^{(m)}_{ij},\omega^{(m)}_{ij}) \in  \mathcal{M}_{\gamma\mu}^{\omega}, i \in I, j \in J_i, m=1,\ldots,M$ \eqref{OLD-R-MILP} 
which enforce  $\omega_{ij}^{(m)}:= \gamma_{j}^{(m)}\mu^{(m)}_{ij}$, we have  $\omega_{ij}^{(m)} = \gamma_{j}^{(m)}\mu^{(m)}_{ij} =
 \gamma_{j}^{(m)} y_{ij} U_{j}^{(m)}$ 
 and substituting $\omega_{ij}^{(m)}$ for $\gamma_{j}^{(m)} y_{ij} U^{(m)}_j$ gives a linear objective function. 

\noindent
\MLB{{\bf (ii) Compaction}:}
The last part of the proof shows that we can replace the sets of inequalities $\mathcal{M}_{\gamma\mu}^{\omega}, i \in I, j \in J_i, m=1,\ldots,M$ in \eqref{OLD-R-MILP} by the much more \MLB{compact sets
$\mathcal{M}_{\ \gamma\mu}^{' \omega}, i \in I, j \in J_i$ (with $m=1$; see last equation in {\bf R-MILP})}. 
 To do that, we first show that
\begin{equation}
\label{PR1}
\omega^{(2)}_{ij} = \mu^{(2)}_{ij}, i \in I, j \in J_i \ .
\end{equation}	
Consider any arbitrary pair of terms $\mu^{(2)}_{ij}$  and $\omega^{(2)}_j$.
The set of linearization constraints 
$(\gamma^{(2)}_j,\mu^{(2)}_{ij},\omega^{(2)}_{ij}) \in  \mathcal{M}_{\gamma\mu}^{\omega}$ 
implies that $\omega^{(2)}_j$ can only take values 0 and $\mu^{(2)}_{ij}$ while the linearization constraints 
$(y_{ij},U_j^{(2)},\mu_{ij}^{(2)}) \in  \mathcal{M}_{yU}^{\mu}$ 
implies that $\mu^{(2)}_{ij}$  can only take values 0 and $U^{(2)}_{j}$.
Consider all possible values for $\mu^{(2)}_{ij}$  and $\omega^{(2)}_j$.
\newline
The case $\omega^{(2)}_{ij}=\mu^{(2)}_{ij}$ is obvious.
If  $\omega^{(2)}_{ij}=0$, then:
either  $\mu^{(2)}_{ij}=0$ and \eqref{PR1} holds, 
or $\gamma^{(2)}_{j}=0$, which in turn implies that $U^{(2)}_j=0$. Therefore, $\mu^{(2)}_{ij}=0$ and \eqref{PR1} holds. 
\newline
If $\mu^{(2)}_{ij}=0$, then $\omega^{(2)}_{ij} =0$ due to 
$(\gamma^{(2)}_j,\mu^{(2)}_{ij},\omega^{(2)}_{ij}) \in  \mathcal{M}_{\gamma\mu}^{\omega}$ 
and \eqref{PR1} holds.
If $\mu^{(2)}_{ij}=U^{(2)}_j >0$, then $\gamma^{(2)}_{j}=1$ and  $\omega_{ij}^{(2)} = \mu^{(2)}_{ij}=U^{(2)}_j$. \\
This shows that \eqref{PR1} always holds true, for any value that  $\omega^{(2)}_{ij}$ and $\mu^{(2)}_{ij}$ can possibly take. Hence, the variables $\omega^{(2)}_{ij}, i\in I, j \in J_i$ are superfluous, and each variable $\omega^{(2)}_{ij}$ can be replaced by its counterpart $\mu^{(2)}_{ij}$. 
Additionally, since, for each $i \in I, j \in J_i$, we have $\omega^{(2)}_{ij}=\mu^{(2)}_{ij}$ and
$\omega^{(2)}_{ij} = \mu^{(2)}_{ij} \cdot \gamma^{(2)}_j$ due to $(\gamma^{(2)}_j,\mu^{(2)}_{ij},\omega^{(2)}_{ij}) \in  \mathcal{M}_{\gamma\mu}^{\omega}$, it follows that: 
\begin{equation}
\label{PR2}
\omega^{(2)}_{ij} = 
\mu^{(2)}_{ij} =  \mu^{(2)}_{ij} \cdot \gamma^{(2)}_j , i \in I, j \in J_i \ .
\end{equation}	
Therefore, each bilinear term $\mu^{(2)}_{ij} \cdot \gamma^{(2)}_j$ can be replaced by $\mu^{(2)}_{ij}$ and the set of linearization constraints $(\gamma^m_j,\mu^m_{ij},\omega^m_{ij}) \in  \mathcal{M}_{\gamma\mu}^{\omega}, i \in I, j \in J_i,m=1,\ldots,M$ \eqref{OLD-R-MILP}
can be replaced by its proper subset $(\gamma^{(1)}_j,\mu^{(1)}_{ij},\omega^{(1)}_{ij}) \in  
\mathcal{M}_{\ \gamma\mu}^{' \omega}, i \in I, j \in J_i \ (m=1)$. This gives the result that we set out to prove. 
}
\hfill$\Box$
\end{proof}

\subsection{Proof of Theorem \ref{TH-GEN}} \label{ATH4}

{\it \underline{\bf{Theorem \ref{TH-GEN}}}:
A stochastic network design model of form $\mathbf{R-BFP}$ that minimizes the average response time of a network of $M/G/K_j$ queueing systems with variable and finitely bounded number of servers $K_j$ is always MILP-representable.
}

Before presenting the proof of Theorem 
\ref{TH-GEN}, we recall two linearization methods  \cite{adams2007} used in the proof:

\begin{itemize}
	\item Let $y \in \{0,1\}^n$.
	The polynomial set 
	$\{(y,z) \in \{0,1\}^n \times [0,1]: z = \prod_{i=1}^n y_i \}$ 
	can be equivalently represented with the MILP set defined by:
	\begin{equation}
	\label{P1}
	\Big\{ (y,z): 
	z \geq 0, \;
	z \geq \sum_{i=1}^n y_i - n + 1, \; 
	z \leq  y_i,\; i=1,\ldots,n \Big\}\ .
	\end{equation}
\item Let $x \in [0,\bar{x}]$, $y \in \{0,1\}^n$.
The polynomial set 
$\{(x,y,z) \in [0,\bar{x}] \times \{0,1\}^n \times [0,\bar{x}]: z = x \prod_{i=1}^n y_i  \}$ 
can be equivalently represented with the MILP set defined by:
\begin{equation}
\label{P2}    
\Big\{ (x,y,z): z \geq 0, \;
z \geq x - \bar{x} \Big(n-\sum_{i=1}^n y_i\Big), \; 
z \leq \bar{x} y_i,  \; 
z \leq x,  \; i=1,\ldots,n \Big\} \ .
\end{equation}
\end{itemize}

\noindent
\begin{proof} 
	Each fractional term \eqref{FT1} ($m \in \{1,M\}$) in the objective function 
 of
 $\mathbf{R-BFP}$ can be equivalently rewritten~as:
	\begin{equation}
		\label{FT2}
		\frac{\gamma_j^{\WMR{(m)}} \sum_{l \in I_j} \lambda_l y_{lj} \tilde{S}_{lj}^2 (\sum_{l \in I_j} \lambda_l y_{lj} \tilde{S}_{lj} )^{m-1}}
		{2\big(
			(m-1)! (m-\sum_{l \in I_j} \lambda_l y_{lj} \tilde{S}_{lj})^2
			\sum_{n = 0}^{m-1} \frac{(\sum_{l \in I_j} \lambda_l y_{lj} \tilde{S}_{lj})^n}{n!} 
			+ (m - \sum_{l \in I_j} \lambda_l y_{lj} \tilde{S}_{lj}) 
			(\sum_{l \in I_j} \lambda_l y_{lj} \tilde{S}_{lj})^m
			\big)} .
	\end{equation}
	Introducing an auxiliary continuous variable $V_j^{\WMR{(m)}} \in [0, \bar{V}_{j}^{\WMR{(m)}}]$ for each term ($m$) in \eqref{FT2} gives:
	\begin{equation}
		\label{FT3}
		V_j^{\WMR{(m)}} = \frac{\gamma_j^{\WMR{(m)}} \sum_{l \in I_j} \lambda_l y_{lj} \tilde{S}_{lj}^2 (\sum_{l \in I_j} \lambda_l y_{lj} \tilde{S}_{lj} )^{m-1}}
		{2\big(
			(m-1)! (m-\sum_{l \in I_j} \lambda_l y_{lj} \tilde{S}_{lj})^2
			\sum_{n = 0}^{m-1} \frac{(\sum_{l \in I_j} \lambda_l y_{lj} \tilde{S}_{lj})^n}{n!} 
			+ (m - \sum_{l \in I_j} \lambda_l y_{lj} \tilde{S}_{lj}) 
			(\sum_{l \in I_j} \lambda_l y_{lj} \tilde{S}_{lj})^m
			\big)} 
	\end{equation}
	Substituting $V_j^{\WMR{(m)}}$ for \eqref{FT2} in the objective function \eqref{OBJ2}, problem $\mathbf{R-BFP}$ becomes:
	
		\begin{align}
			\min & \ \sum_{i \in I} \sum_{j \in J_i} \frac{y_{ij}d_{ij} \lambda_i}{v \sum_{l \in I} \lambda_l} 
			+  \sum_{i \in I} \sum_{j \in J_i} \sum_{m=1}^{M} \ \frac{y_{ij}\lambda_i V_j^{\WMR{(m)}}}{\sum_{l \in I} \lambda_l} \notag \\ 
			\text{s.to} \ 
			& (x,y,\gamma) \in \mathcal{B} & \notag   \\ 
			& \underbrace{V_j^{\WMR{(m)}} 
				(m-1)! (m-\sum_{l \in I_j} \lambda_l y_{lj} \tilde{S}_{lj})^2
				\sum_{n = 0}^{m-1} \frac{(\sum_{l \in I_j} \lambda_l y_{lj} \tilde{S}_{lj})^n}{n!}}_{T1} 
			+ \underbrace{V_j^{\WMR{(m)}}(m - \sum_{l \in I_j} \lambda_l y_{lj} \tilde{S}_{lj}) 
				(\sum_{l \in I_j} \lambda_l y_{lj} \tilde{S}_{lj})^m}_{T2}
			\notag \\
			= \ & \underbrace{1/2 \ \gamma_j^{\WMR{(m)}} \sum_{l \in I_j} \lambda_l y_{lj} \tilde{S}_{lj}^2 (\sum_{l \in I_j} \lambda_l y_{lj} \tilde{S}_{lj} )^{m-1}}_{T3} \; ,  \; j\in J, \ML{m=1,\ldots,M} \label{SUBST1BIS}
		\end{align}
It can be seen from the above that:
\vspace{-0.06in}
\begin{itemize} 
		\item The objective function includes bilinear terms involving the product of a continuous variable $V_j^{\WMR{(m)}}$ and a binary variable $y_{ij}$.
		These bilinear terms can be linearized using  \eqref{P2}.
		\vspace{-0.06in}
		\item The expressions T1 and T2 in \eqref{SUBST1BIS}  include both polynomial terms of degree $(m+2)$ 
		with monomials involving the product of a continuous variable by up to $(m+1)$ binary variables. 
		These polynomial terms can be linearized using  \eqref{P2}.
		\vspace{-0.06in}
		\item The expression T3 in the right-side of  \eqref{SUBST1BIS}  includes polynomial terms of degree $(m+1)$ involving products of $m+1$ binary variables.
		These polynomial terms can be linearized using  \eqref{P1}.
	\end{itemize}
\vspace{-0.06in}
This shows that both the objective function and each constraint \eqref{FT2} can be linearized regardless of the value of $M$, which is the result that we set out to prove.
\hfill$\Box$
\end{proof}
\vspace{-0.04in}
\subsection{Proof of Proposition \ref{VALID1}} \label{A3}
\vspace{-0.03in}
{\it \underline{\bf{Proposition \ref{VALID1}}}:
	Let $U_{j}^{\WMR{(m)}} \in [0,\bar{U}_j^{\WMR{(m)}}], m=1,2$.
	The linear constraints 
\vspace{-0.04in}
	\begin{equation} 
		\notag 
		\gamma_j^{\WMR{(1)}} + \gamma_j^{\WMR{(2)}} \geq U_j^{\WMR{(m)}} / \bar{U}_{j}^{\WMR{(m)}} , \quad j \in J, m=1,2 
	\end{equation}
	are valid inequalities for problem \textbf{R-MILP}.
}

\noindent
\begin{proof} 
If for any $m \in \{1,2\}$, we have $U_j^{\WMR{(m)}} / \bar{U}_j^{\WMR{(m)}} > 0$, which means $U_j^{\WMR{(m)}} >0$ and 
	$U_j^{\WMR{(m)}} / \bar{U}_j^{\WMR{(m)}} \leq 1$ since $U_j^{\WMR{(m)}} \in [0, \bar{U}_j^{\WMR{(m)}}]$, this implies in turn $\gamma^{\WMR{(1)}}_{j} + \gamma^{\WMR{(2)}}_{j} = 1$. This forces either $\gamma^{\WMR{(1)}}_j$ or $\gamma^{\WMR{(2)}}_j$ to take value 1, which does not cut off any integer solution.
	If there is no DB open at $j$, i.e., $x_j=0 =\gamma_j^{\WMR{(1)}} + \gamma_j^{\WMR{(2)}}$, then $y_{ij}=0, i \in J_i$ due to \eqref{eq_lim} and $U^{\WMR{(1)}}_j=U^{\WMR{(2)}}_j=0$ due to \eqref{1drone_cons} and \eqref{2drone_cons}, which is valid for \eqref{VI3}.
\hfill$\Box$ 
\end{proof}


\subsection{Proof of Proposition \ref{valid_c_6}} \label{A4}
\vspace{-0.03in}
{\it \underline{\bf{Proposition \ref{valid_c_6}}}:
The linear constraints
\vspace{-0.05in}
\begin{equation}  
\notag 
U_j^{\WMR{(1)}} \geq U_j^{\WMR{(2)}}  , \quad j \in J
\vspace{-0.04in}
\end{equation}
are valid inequalities for problem \textbf{R-MILP}.
}

\noindent
\begin{proof} 
If no DB is set up at location, we have  $U_j^{\WMR{(1)}} = U_j^{\WMR{(2)}} = 0$ due to \eqref{1drone_cons} and \eqref{2drone_cons}, and \eqref{VI4} holds.
\newline
If a DB is set up at location $j$, we have from \eqref{2drone_cons} the first equality below: 
\begin{align}
U_j^{\WMR{(2)}} &= \frac{ \sum_{l \in I_j} \lambda_{l}y_{lj}\tilde{S}_{lj}^2   \sum\limits_{l \in I_j} \lambda_{l}y_{lj}\tilde{S}_{lj}}{(2- \sum\limits_{l \in I_j} \lambda_{l}y_{lj}\tilde{S}_{lj})^2  + (2- \sum\limits_{l \in I_j} \lambda_{l}y_{lj}\tilde{S}_{lj})^2 
\sum\limits_{l \in I_j} \lambda_{l}y_{lj}\tilde{S}_{lj}
+ (2- \sum\limits_{l \in I_j} \lambda_{l}y_{lj}\tilde{S}_{lj}) (\sum\limits_{l\in I_j} \lambda_{l}y_{lj}\tilde{S}_{lj})^{2}} \label{INEQ1} \\
&\leq \frac{ \sum_{l \in I_j} \lambda_{l}y_{lj}\tilde{S}_{lj}^2   \sum\limits_{l \in I_j} \lambda_{l}y_{lj}\tilde{S}_{lj}}{(2- \sum\limits_{l \in I_j} \lambda_{l}y_{lj}\tilde{S}_{lj})^2 
\sum\limits_{l \in I_j} \lambda_{l}y_{lj}\tilde{S}_{lj}} 
= \frac{ \sum_{l \in I_j} \lambda_{l}y_{lj}\tilde{S}_{lj}^2}{(2- \sum\limits_{l \in I_j} \lambda_{l}y_{lj}\tilde{S}_{lj})^2} \label{INEQ2} \\ 
& \leq \frac{ \sum_{l \in I_j} \lambda_{l}y_{lj}\tilde{S}_{lj}^2}{2- \sum\limits_{l \in I_j} \lambda_{l}y_{lj}\tilde{S}_{lj}} \label{INEQ3} \\
&\leq \frac{ \sum_{l \in I_j} \lambda_{l}y_{lj}\tilde{S}_{lj}^2}{1- \sum\limits_{l \in I_j} \lambda_{l}y_{lj}\tilde{S}_{lj}} = U^{\WMR{(1)}}_j. \label{INEQ4}
\end{align}
The validity of the first inequality is implied by the steady-state requirement \eqref{steady-state} according to which we have either $1 - \sum_{l \in I_j} \lambda_{l}y_{lj}\tilde{S}_{lj} \geq 0$ when $\gamma_j^{\WMR{(1)}}=1$ or $2 - \sum_{l \in I_j} \lambda_{l}y_{lj}\tilde{S}_{lj} \geq 0$ when $\gamma_j^{\WMR{(2)}}=1$.
	It follows immediately that the  
	denominator of \eqref{INEQ1} is larger than the one in \eqref{INEQ2}.
	Since the numerators are the same in \eqref{INEQ1} and \eqref{INEQ2}, we have  \eqref{INEQ2} $\geq$ \eqref{INEQ1}
	Next, observe that \eqref{1drone_cons} implies that we have always $1 \geq \sum_{l \in I_j} \lambda_{l}y_{lj}\tilde{S}_{lj}$ since otherwise the nonnegative auxiliary variable  $U_j^{\WMR{(1)}}$ would be negative.
	This in turn implies 
	$2- \sum_{l \in I_j} \lambda_{l}y_{lj}\tilde{S}_{lj} \geq 1$ and 
	$(2- \sum_{l \in I_j} \lambda_{l}y_{lj}\tilde{S}_{lj})^2 \geq 
	2- \sum_{l \in I_j} \lambda_{l}y_{lj}\tilde{S}_{lj} \geq 1$. Therefore, the denominator of \eqref{INEQ2} is larger than the one of \eqref{INEQ3} and \eqref{INEQ3} $\geq$ \eqref{INEQ2}
	Similarly, the third inequality is valid since
	$2- \sum_{l \in I_j} \lambda_{l}y_{lj}\tilde{S}_{lj} > 
	1- \sum_{l \in I_j} \lambda_{l}y_{lj}\tilde{S}_{lj}$ which implies \eqref{INEQ4} $>$ \eqref{INEQ3} and allows concluding that $U^{\WMR{(2)}}_j \leq U^{\WMR{(1)}}_j$.
\hfill$\Box$ 
\end{proof}

\subsection{Proof of Proposition \ref{valid_c_5}} \label{A5}

{\it \underline{\bf{Proposition \ref{valid_c_5}}}:
The linear constraints
\vspace{-0.05in}
\begin{equation} 
\notag 
\gamma_j^{\WMR{(1)}} + \gamma_j^{\WMR{(2)}} = x_{j} , \quad j \in J
\vspace{-0.05in}
\end{equation}
are optimality cuts for problem \textbf{R-MILP}.
}

\noindent
\begin{proof} 
The sum $\gamma^{\WMR{(1)}}_j+\gamma^{\WMR{(2)}}_j$ can never exceed 1 due to \eqref{NEW1}.
If $\gamma^{\WMR{(1)}}_j+\gamma^{\WMR{(2)}}_j =1$, it follows from \eqref{open_drone-2} that $x_j=1$, 
If $\gamma^{\WMR{(1)}}_j+\gamma^{\WMR{(2)}}_j =0$, \eqref{open_drone-2} allows 
$x_j$ to be equal to 0 or 1. However, \eqref{VI5} cuts off the integer solutions $(x_j,\gamma_j^{\WMR{(1)}},\gamma_j^{\WMR{(2)}}) = (1,0,0), j \in J$, which are not optimal and do not give a better objective value than $(x_j, \gamma_j^{\WMR{(1)}},\gamma_j^{\WMR{(2)}}) = (0,0,0), j \in J$. 
\hfill$\Box$
\end{proof}

\vspace{-0.1in}
\section{Drone-delivery Network for Opioid Overdoses- Illustration}\label{APP-ILLU}
\vspace{-0.09in}
We use a small network to illustrate the formulations presented in this study.
We consider two candidate locations for drone bases (DB $j$=1 and $j$=2) which can each house at most two ($M$) drones. 
There are three OTRs ($i=1,2,3$). Requests $i=1$ and $i=2$  are within the catchment area of a drone positioned at DB $j=1$ while
requests $i=2$ and $i=3$  are within the catchment area of a drone at DB $j=2$. Figure \ref{fig-illustration} provides a visualization of the network. 
The solid circle represents DBs while the dotted circles represent catchment areas of a DB and its drones. The diamonds are the locations of the overdose incidents.
\vspace{-0.1in}
\begin{figure}[H]
\centering
\includegraphics[width=11cm, height=6.25cm]{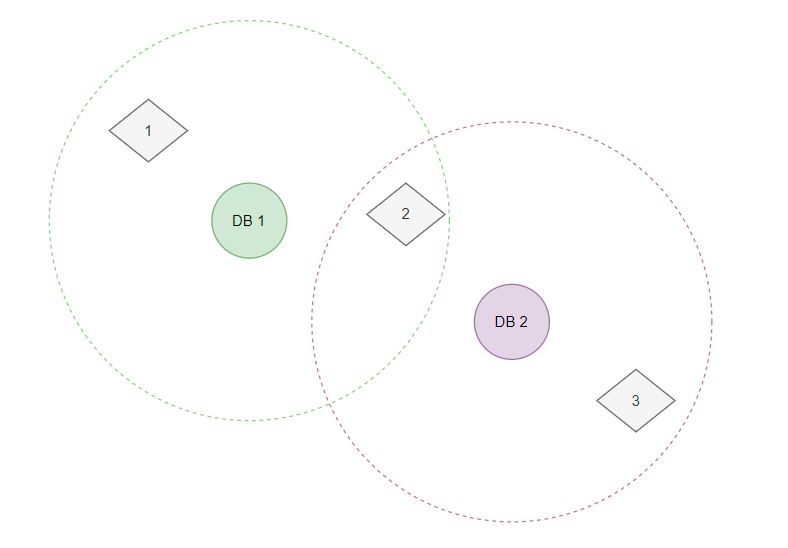}
\vspace{-0.07in}
\caption{Small Drone Network and OTRs}
\label{fig-illustration}
\end{figure}

The objective function \eqref{D1_obj} of problem  \textbf{B-IFP} can be written as:
\begin{subequations}
\begin{align}
\min  & \; \frac{1}{\lambda_1 + \lambda_2 + \lambda_3} 
\Bigg[
 \frac{\lambda_1 d_{11}y_{11}}{v} + \frac{\lambda_2 d_{21}y_{21}}{v} + \frac{\lambda_2 d_{22}y_{22}}{v} + \frac{\lambda_3 d_{32}y_{32}}{v} +  
\notag \\
&\hspace{-0.6cm}    \frac{\lambda_1 y_{11} (\lambda_1 y_{11}\mathbb{E}[{S}_{11}^2]+ \lambda_2 y_{21}\mathbb{E}[{S}_{21}^2])  (\lambda_1 y_{11}
\mathbb{E}[S_{11}] + \lambda_2 y_{21}
\mathbb{E}[S_{21}])} {2(2-  (\lambda_1 y_{11}\mathbb{E}[S_{11}] + \lambda_2 y_{21}\mathbb{E}[S_{21}]  ))^2 \big[  1 +  
 (\lambda_1 y_{11}\mathbb{E}[S_{11}] + \lambda_2 y_{21}\mathbb{E}[S_{21}]
) + \frac{(\lambda_1 y_{11}
\mathbb{E}[S_{11}] + \lambda_2 y_{21}
\mathbb{E}[S_{21}])^2}
{2-  (\lambda_1 y_{11}\mathbb{E}[S_{11}] + \lambda_2 y_{21}\mathbb{E}[S_{21}]  )}\big]} +
\notag \\
&\hspace{-0.6cm}    \frac{\lambda_2 y_{21} (\lambda_1 y_{11}\mathbb{E}[{S}_{11}^2]+ \lambda_2 y_{21}\mathbb{E}[{S}_{21}^2])  (\lambda_1 y_{11}
\mathbb{E}[S_{11}] + \lambda_2 y_{21}
\mathbb{E}[S_{21}])} {2(2-  (\lambda_1 y_{11}\mathbb{E}[S_{11}] + \lambda_2 y_{21}\mathbb{E}[S_{21}]  ))^2 \big[  1 +  
 (\lambda_1 y_{11}\mathbb{E}[S_{11}] + \lambda_2 y_{21}\mathbb{E}[S_{21}]
) + \frac{(\lambda_1 y_{11}
\mathbb{E}[S_{11}] + \lambda_2 y_{21}
\mathbb{E}[S_{21}])^2}
{2-  (\lambda_1 y_{11}\mathbb{E}[S_{11}] + \lambda_2 y_{21}\mathbb{E}[S_{21}]  )}\big]} +
\notag  \\
&\hspace{-0.6cm}    \frac{\lambda_2 y_{22} (\lambda_2 y_{22}\mathbb{E}[{S}_{22}^2]+ \lambda_3 y_{32}\mathbb{E}[{S}_{32}^2])  (\lambda_2 y_{22}
\mathbb{E}[S_{22}] + \lambda_3 y_{32}
\mathbb{E}[S_{32}])} {2(2-  (\lambda_2 y_{22}\mathbb{E}[S_{22}] + \lambda_3 y_{32}\mathbb{E}[S_{32}]  ))^2 \big[  1 +  
 (\lambda_2 y_{22}\mathbb{E}[S_{22}] + \lambda_3 y_{32}\mathbb{E}[S_{32}]
) + \frac{(\lambda_2 y_{22}
\mathbb{E}[S_{22}] + \lambda_3 y_{32}
\mathbb{E}[S_{32}])^2}
{2-  (\lambda_2 y_{22}\mathbb{E}[S_{22}] + \lambda_3 y_{32}\mathbb{E}[S_{32}]  )}\big]} +
\notag  \\
&\hspace{-0.6cm}    \frac{\lambda_3 y_{32} (\lambda_2 y_{22}\mathbb{E}[{S}_{22}^2]+ \lambda_3 y_{32}\mathbb{E}[{S}_{32}^2])  (\lambda_2 y_{22}
\mathbb{E}[S_{22}] + \lambda_3 y_{32}
\mathbb{E}[S_{32}])} {2(2-  (\lambda_2 y_{22}\mathbb{E}[S_{22}] + \lambda_3 y_{32}\mathbb{E}[S_{32}]  ))^2 \big[  1 +  
 (\lambda_2 y_{22}\mathbb{E}[S_{22}] + \lambda_3 y_{32}\mathbb{E}[S_{32}]
) + \frac{(\lambda_2 y_{22}
\mathbb{E}[S_{22}] + \lambda_3 y_{32}
\mathbb{E}[S_{32}])^2}
{2-  (\lambda_2 y_{22}\mathbb{E}[S_{22}] + \lambda_3 y_{32}\mathbb{E}[S_{32}]  )}\big]}
\Bigg]
\notag
\end{align}
\end{subequations}

\vspace{0.1in}
The objective function \eqref{OBJ2} of problem  \textbf{R-BFP} can be written as:
\begin{subequations}
\begin{align}
\min  & \; \frac{1}{\lambda_1 + \lambda_2 + \lambda_3} 
\Bigg[
 \frac{\lambda_1 d_{11}y_{11}}{v} + \frac{\lambda_2 d_{21}y_{21}}{v} + \frac{\lambda_2 d_{22}y_{22}}{v} + \frac{\lambda_3 d_{32}y_{32}}{v} +  
\notag \\
&\hspace{-0.6cm}    \frac{\lambda_1 y_{11} \gamma_{1}^{\WMR{(1)}} (\lambda_1 y_{11}\mathbb{E}[{S}_{11}^2]+ \lambda_2 y_{21}\mathbb{E}[{S}_{21}^2])} {2(1-  (\lambda_1 y_{11}\mathbb{E}[S_{11}] + \lambda_2 y_{21}\mathbb{E}[S_{21}]  ))^2 \big[  1 +  
  \frac{\lambda_1 y_{11}
\mathbb{E}[S_{11}] + \lambda_2 y_{21}
\mathbb{E}[S_{21}]}
{1-  (\lambda_1 y_{11}\mathbb{E}[S_{11}] + \lambda_2 y_{21}\mathbb{E}[S_{21}]  )}\big]} +
\notag \\
&\hspace{-0.6cm}    \frac{\lambda_1 y_{11} \gamma_{1}^{\WMR{(2)}} (\lambda_1 y_{11}\mathbb{E}[{S}_{11}^2]+ \lambda_2 y_{21}\mathbb{E}[{S}_{21}^2])  (\lambda_1 y_{11}
\mathbb{E}[S_{11}] + \lambda_2 y_{21}
\mathbb{E}[S_{21}])} {2(2-  (\lambda_1 y_{11}\mathbb{E}[S_{11}] + \lambda_2 y_{21}\mathbb{E}[S_{21}]  ))^2 \big[  1 +  
 (\lambda_1 y_{11}\mathbb{E}[S_{11}] + \lambda_2 y_{21}\mathbb{E}[S_{21}]
) + \frac{(\lambda_1 y_{11}
\mathbb{E}[S_{11}] + \lambda_2 y_{21}
\mathbb{E}[S_{21}])^2}
{2-  (\lambda_1 y_{11}\mathbb{E}[S_{11}] + \lambda_2 y_{21}\mathbb{E}[S_{21}]  )}\big]} +
\notag \\
&\hspace{-0.6cm}    \frac{\lambda_2 y_{21} \gamma_{1}^{\WMR{(1)}} (\lambda_1 y_{11}\mathbb{E}[{S}_{11}^2]+ \lambda_2 y_{21}\mathbb{E}[{S}_{21}^2])} {2(1-  (\lambda_1 y_{11}\mathbb{E}[S_{11}] + \lambda_2 y_{21}\mathbb{E}[S_{21}]  ))^2 \big[  1 +  
  \frac{(\lambda_1 y_{11}
\mathbb{E}[S_{11}] + \lambda_2 y_{21}
\mathbb{E}[S_{21}])}
{1-  (\lambda_1 y_{11}\mathbb{E}[S_{11}] + \lambda_2 y_{21}\mathbb{E}[S_{21}]  )}\big]} +
\notag  \\
&\hspace{-0.6cm}    \frac{\lambda_2 y_{21} \gamma_{1}^{\WMR{(2)}} (\lambda_1 y_{11}\mathbb{E}[{S}_{11}^2]+ \lambda_2 y_{21}\mathbb{E}[{S}_{21}^2])  (\lambda_1 y_{11}
\mathbb{E}[S_{11}] + \lambda_2 y_{21}
\mathbb{E}[S_{21}])} {2(2-  (\lambda_1 y_{11}\mathbb{E}[S_{11}] + \lambda_2 y_{21}\mathbb{E}[S_{21}]  ))^2 \big[  1 +  
 (\lambda_1 y_{11}\mathbb{E}[S_{11}] + \lambda_2 y_{21}\mathbb{E}[S_{21}]
) + \frac{(\lambda_1 y_{11}
\mathbb{E}[S_{11}] + \lambda_2 y_{21}
\mathbb{E}[S_{21}])^2}
{2-  (\lambda_1 y_{11}\mathbb{E}[S_{11}] + \lambda_2 y_{21}\mathbb{E}[S_{21}]  )}\big]} +
\notag \\
&\hspace{-0.6cm}    \frac{\lambda_2 y_{22} \gamma_{2}^{\WMR{(1)}} (\lambda_2 y_{22}\mathbb{E}[{S}_{22}^2]+ \lambda_3 y_{32}\mathbb{E}[{S}_{32}^2])  } {2(1-  (\lambda_2 y_{22}\mathbb{E}[S_{22}] + \lambda_3 y_{32}\mathbb{E}[S_{32}]  ))^2 \big[  1 +  
   \frac{\lambda_2 y_{22}
\mathbb{E}[S_{22}] + \lambda_3 y_{32}
\mathbb{E}[S_{32}]}
{1-  (\lambda_2 y_{22}\mathbb{E}[S_{22}] + \lambda_3 y_{32}\mathbb{E}[S_{32}]  )}\big]} + 
\notag \\
&\hspace{-0.6cm}    \frac{\lambda_2 y_{22} \gamma_{2}^{\WMR{(2)}} (\lambda_2 y_{22}\mathbb{E}[{S}_{22}^2]+ \lambda_3 y_{32}\mathbb{E}[{S}_{32}^2])  (\lambda_2 y_{22}
\mathbb{E}[S_{22}] + \lambda_3 y_{32}
\mathbb{E}[S_{32}])} {2(2-  (\lambda_2 y_{22}\mathbb{E}[S_{22}] + \lambda_3 y_{32}\mathbb{E}[S_{32}]  ))^2 \big[  1 +  
 (\lambda_2 y_{22}\mathbb{E}[S_{22}] + \lambda_3 y_{32}\mathbb{E}[S_{32}]
) + \frac{(\lambda_2 y_{22}
\mathbb{E}[S_{22}] + \lambda_3 y_{32}
\mathbb{E}[S_{32}])^2}
{2-  (\lambda_2 y_{22}\mathbb{E}[S_{22}] + \lambda_3 y_{32}\mathbb{E}[S_{32}]  )}\big]} +
\notag  \\
&\hspace{-0.6cm}    \frac{\lambda_3 y_{32} \gamma_{2}^{\WMR{(1)}} (\lambda_2 y_{22}\mathbb{E}[{S}_{22}^2]+ \lambda_3 y_{32}\mathbb{E}[{S}_{32}^2])  } {2(1-  (\lambda_2 y_{22}\mathbb{E}[S_{22}] + \lambda_3 y_{32}\mathbb{E}[S_{32}]  ))^2 \big[  1 +  
   \frac{\lambda_2 y_{22}
\mathbb{E}[S_{22}] + \lambda_3 y_{32}
\mathbb{E}[S_{32}]}
{1-  (\lambda_2 y_{22}\mathbb{E}[S_{22}] + \lambda_3 y_{32}\mathbb{E}[S_{32}]  )}\big]} +
\notag \\
&\hspace{-0.6cm}    \frac{\lambda_3 y_{32} \gamma_{2}^{\WMR{(2)}} (\lambda_2 y_{22}\mathbb{E}[{S}_{22}^2]+ \lambda_3 y_{32}\mathbb{E}[{S}_{32}^2])  (\lambda_2 y_{22}
\mathbb{E}[S_{22}] + \lambda_3 y_{32}
\mathbb{E}[S_{32}])} {2(2-  (\lambda_2 y_{22}\mathbb{E}[S_{22}] + \lambda_3 y_{32}\mathbb{E}[S_{32}]  ))^2 \big[  1 +  
 (\lambda_2 y_{22}\mathbb{E}[S_{22}] + \lambda_3 y_{32}\mathbb{E}[S_{32}]
) + \frac{(\lambda_2 y_{22}
\mathbb{E}[S_{22}] + \lambda_3 y_{32}
\mathbb{E}[S_{32}])^2}
{2-  (\lambda_2 y_{22}\mathbb{E}[S_{22}] + \lambda_3 y_{32}\mathbb{E}[S_{32}]  )}\big]}
\Bigg]
\notag
\end{align}
\end{subequations}

The objective function \eqref{obj_lin} of problem  \textbf{R-MILP} can be written as:
\begin{subequations}
\begin{align}
\min  & \; \frac{1}{\lambda_1 + \lambda_2 + \lambda_3} 
\Bigg[
 \frac{\lambda_1 d_{11}y_{11}}{v} + \frac{\lambda_2 d_{21}y_{21}}{v} + \frac{\lambda_2 d_{22}y_{22}}{v} + \frac{\lambda_3 d_{32}y_{32}}{v} +  
\notag \\
&    \lambda_1(\frac{\omega_{11}^{\WMR{(1)}}}{2} + \frac{\omega_{11}^{\WMR{(2)}}}{2}) + \lambda_2(
\frac{\omega_{21}^{\WMR{(1)}}}{2} + \frac{\omega_{21}^{\WMR{(2)}}}{2} + \frac{\omega_{22}^{\WMR{(1)}}}{2} + \frac{\omega_{22}^{\WMR{(2)}}}{2}) +
\lambda_{3}(
\frac{\omega_{32}^{\WMR{(1)}}}{2} + \frac{\omega_{32}^{\WMR{(2)}}}{2}) \notag
\Bigg]
\notag
\end{align}
\end{subequations}

\vspace{-0.2in}
\section{Size of Formulations} \label{SIZE}
\vspace{-0.2in}
\begin{table}[h!t]
	\centering
	\begin{adjustbox}{width=0.85\textwidth}
		\begin{tabular}{c|c|c}
			\hline
			& $\mathbf{B-IFP}$ & $\mathbf{R-BFP}$ \\ 
\hline
Number of  constraints	    
& $\sum_{i \in I}|J_i| + |I|+2|J|+2$  
& $ \sum_{i \in I}|J_i| + |I|+3|J|+2 $\\
\hline
Number of  binary variables	& $\sum_{i \in I}|J_i| + |J|$ &  
$ \sum_{i \in I}|J_i| + (M + 1)|J| $ \\
\hline
Number of  general integer variables & $|J|$ & 0 \\
\hline
		\end{tabular}%
	\end{adjustbox}
\caption{Dimensions of Problems $\mathbf{B-IFP}$ and  $\mathbf{R-BFP}$}
	\label{T01}	
\end{table}

\vspace{-0.2in}
\section{Pseudo-code of Outer Approximation Branch-and-Cut Algorithm {\tt OA-B\&C}} \label{A-PS}
\vspace{-0.15in}
\begin{algorithm}[] 
	\caption{Outer Approximation Branch-and-Cut Algorithm (OA-B\&C)} \label{algo_oa-bc}
	{\small
		\begin{algorithmic} 
			\State \textbf{Part 1 (Initialization)}:
			$\mathcal{L}_0:= 
			\{\mathcal{F}_y^z \cup \mathcal{M}^{\tau}_{zU}\}$;
			\ $\mathcal{B} = \{\eqref{VI5}\};$ \ $\mathcal{L'}_0:= \mathcal{L}_0 \cup \mathcal{B}$;
			\ $\mathcal{U}_0:= \{\eqref{VI3};\eqref{VI4} \}$;
			\ $\mathcal{A}_0:= C \setminus \mathcal{L}_0$; $\mathcal{V}^L_0 = \emptyset$;
			\ $\mathcal{V}^U_0 = \emptyset$.
			\State \textbf{Part 2 (Iterative Procedure): At node $k$} 
			\State \quad \textbf{Step 1: Solution of nodal relaxation problem:} 
		$	\mathbf{OA-MILP}_k: \; \min \eqref{obj_lin} \quad \text{s.to} \quad (x,y,\gamma,z,U,\mu,\tau,\omega) \in \mathcal{A}_k  \ . $

			\State \quad \textbf{Step 2: Set Update:} \\
			
			\begin{itemize}
				\item \textbf{If the objective value corresponding to $X^*_k$ is not better than that of the incumbent}, the node is pruned.
				\item \textbf{If the objective value corresponding to $X^*_k$ is better than that of the incumbent}, then:
				\begin{itemize}

					\item \textbf{If  $X^*_k$ is fractional}, apply user callback:
					
					\begin{itemize}
						\item If $X_k^{*}$ violates any constraint in $\mathcal{U}_o$: 
						\begin{itemize}
							\item Move violated constraints to $\mathcal{V}_k^{U}$ and discard $X_k^{*}$.
							
							\item Update sets of user cuts and active constraints for each open node $o \in \mathcal {O}$
							\[
							\mathcal{U}_o \leftarrow \mathcal{U}_o  \setminus  \mathcal{V}^U_k \quad \text{and} \quad
							\mathcal{A}_o \leftarrow \mathcal{A}_o  \cup \mathcal{V}^U_k.
							\]   
						\end{itemize}
					\item If no valid inequality in $\mathcal{U}_o$ is violated by $X^*_k$, then branching constraints are entered to cut off $X^*_k$ and the next open node is processed. 
				\end{itemize}
			
		 \item 	\textbf{If $X^*_k$ is integer-valued},
    check for possible violation of lazy constraints: 
    \begin{itemize}
    \item If $X^*_k$ violates any constraint in $\mathcal{L}^{'}_o$:
    
    \begin{itemize}
    \item Move violated lazy constraints to $\mathcal{V}^L_k$ and discard $X^*_k$. 

    \item  Update sets of lazy and active constraints for each open node $o  \in \mathcal {O}$: 
\[
    \mathcal{L}^{'}_o \leftarrow \mathcal{L}^{'}_o \setminus \mathcal{V}^L_k 
    \qquad \text{and} \qquad 
    \mathcal{A}_o \leftarrow \mathcal{A}_o  \cup \mathcal{V}^L_k.
    \]
\end{itemize}
    
    \item If $X^*_k$ does not violate any constraint in $\mathcal{L}^{'}_o$, $X^*_k$ becomes the incumbent and node $k$ is pruned.
\end{itemize}
\end{itemize}
		
	\end{itemize}
	\State \textbf{Part 3 (Termination):} The algorithm stops when $\mathcal{O} = \emptyset$.
\end{algorithmic}
}
\end{algorithm}

\vspace{-0.2in}
\section{Outer Approximation Algorithm with Lazy Constraints}
\label{APP-sub_sec_lazy_c}
\vspace{-0.05in}

The outer approximation algorithm {\tt OA} is designed as follows. 
At the root node ($k=0$), we have:
\vspace{-0.05in}
\begin{align}
& \mathcal{L}_0:= 
\{\mathcal{F}_y^z \cup \mathcal{M}^{\tau}_{zU}\}.
\nonumber \\ 
& \mathcal{A}_0:= \{
\mathcal{B} ; \eqref{U}-\eqref{V} ;  \mathcal{M}^{\mu}_{yU} ; \mathcal{M}^{\omega}_{\gamma \mu}  \}. 
\nonumber \\ 
&\mathcal{V}_0^L:= \emptyset. 
\nonumber  
\end{align}
\vspace{-0.15in}
At any node $k$, 
the reduced-size relaxation (outer approximation) problem $\mathbf{OA-MILP}_k$ is solved:
\[
\mathbf{OA-MILP}_k: \; \min \eqref{obj_lin} \quad \text{s.to} \quad (x,y,\gamma,z,U,\mu,\tau,\omega) \in \mathcal{A}_k \}. 
\vspace{-0.05in}
\]
Two possibilities arise depending on the optimal solution  $X^*_k$ 
of the continuous relaxation of $\mathbf{OA-MILP}_k$:
\begin{enumerate}
    \item If  $X^*_k$ is fractional, we introduce branching linear inequalities to cut off the fractional nodal optimal solution and we continue the branch-and-bound process. 
    \item If $X^*_k$ is an integer-valued solution with better objective value than the one of the incumbent, we check for possible violation of the current lazy constraints: 
    \begin{itemize}
    \item If some constraints in $\mathcal{L}_k$ are violated by    $X^*_k$, they are inserted in $\mathcal{V}^L_k \subseteq \mathcal{L}_k$ and $X^*_k$ is discarded.   
    The lazy and active constraint sets of each open node $o \in \mathcal {O}$ are updated as follows:  
    \[
    \mathcal{L}_o \leftarrow \mathcal{L}_o  \setminus \mathcal{V}^L_k  \quad \text{and} \quad     
    \mathcal{A}_o \leftarrow \mathcal{A}_o  \cup \mathcal{V}^L_k.
    \]
    \item If no lazy constraint 
    is violated, 
    $X^*_k$ becomes the incumbent solution 
    and the node~is~pruned.
    \end{itemize}
\end{enumerate}

The above process terminates when all nodes are pruned. The verification of the possible violation of the lazy constraints is carried out within a callback function, which is not performed at each node of the tree, but only when a better integer-valued feasible solution is found.

\newpage
\section{Pseudo-Code of Simulator \label{A-Sim}}
\vspace{-0.12in}
\ML{
We leverage the simulator 
to generate and compare the average response times obtained with our model {\bf R-MILP}, an alternative formulation {\bf BM}, and two constructive greedy heuristics {\bf H-DB and \bf H-OTR}. 

To ensure that the steady state condition holds, we follow the procedure adopted in two recent EMS studies \cite{BoutilierOR2020,CORMACK} and process first a buffer of overdose requests prior to the period of interest. Experiments show that a buffer of at least five hours of OTR requests produces results that are independent of buffer duration. 
For each test, we run the simulation 100 times and we record the mean as well as the 5th and 95th percentiles of the response times.
As in \cite{EVERS} for the stochastic UAV mission planning with uncertain service times, we sample the service time from a gamma distribution with shape parameter equal to 4 and mean equal to the expected value of $\mathbb E[S_{ij}]$. 

}

Algorithm \ref{algo_sim} present the pseudo-code of the simulator. We use $t_c$ to denote the current time, $t_f$ to denote the drone flight time from its base to OTR or from OTR to its base, and $t_s$ to denote the combined on-scene service time and time to clean and recharge drones.

\begin{algorithm}[H] 
\caption{Drone Network Simulator} \label{algo_sim}
\begin{algorithmic}
\State {$O \gets$ Load all OTRs with time and location}
\State {$D \gets$ Load all deployed drones}
\State {function Simulate$(O, D):$}
    \Indent
    \State {$E \gets O$} \Comment{Initialize event queue with OTRs} 
    \State {$A \gets D$} \Comment{Initialize list of available drones}  
    \State {$Q \gets \emptyset$} \Comment{Initialize empty call queue}
    \While {$|E| > 0:$}
        \State {Remove next event $e$ from $E$} 
        \State{update the current time $t_c$}
        \If {$e$ is an OTR:}
            \If {$A$ has no drone available within the flight radius}
                \State {$Q \gets Q + e$} \Comment{Queue incoming OTR}
            \Else {\ Dispatch the closet drone d}
                \State {$A \gets A - d$} 
                \Comment{Remove drone from $A$}
                \State {$e_{new}$}
                \Comment{Create new event when drone is available after task completion}
                \State{
                Insert $e_{new}$ to $E$ at time $t = t_c + t_f + t_s + t_f$}
            \EndIf
        \Else { $e$ is a drone available event:}
            \State {$A \gets A + d$} \Comment{Add drone to available list}
            \If {$|Q| > 0$ and is within radius} 
            \Comment{If a drone can be assigned to OTR in Q}
            \State {Dispatch closest drone d}
            \State {$A \gets A - d'$}
            \State {$e_{new}$}
            \Comment{Create new event when drone is available after task completion}
            \State{
            Insert $e_{new}$ to $E$ at time $t = t_c + t_f + t_s + t_f$}
            \EndIf
        \EndIf
    \EndWhile
\EndIndent
\end{algorithmic}
\end{algorithm}
\vspace{-0.25in}
\section{Data Summary}\label{app-sec-data-summary}
\vspace{-0.15in}
\begin{table}[H]
\centering
\setlength\extrarowheight{1pt}
\begin{tabular}{P{3.5cm} |P{2.5cm}|P{2.5cm}|P{2.5cm}|P{2.5cm}}
 \hline
Pairs of Sets & Training Sets & $|I|$ - Training & Testing Sets & $|I|$ - Testing\\
 \hline
(2018Q2, 2018Q3) & 2018Q2 &173  & 2018Q3  &  131\\
(2018Q3, 2018Q4) & 2018Q3 &131  & 2018Q4  &  120\\
(2018Q4, 2019Q1) & 2018Q4 &120  & 2019Q1  &  138\\
(2019Q1, 2019Q2) & 2019Q1 &138  & 2019Q2  &  171\\
 \hline
\end{tabular}
\vspace{-0.05in}
\caption{\label{data_summary} Data Summary}
\end{table}

\section{\textcolor{blue}{Confidence Interval for Response Times of R-MILP}}
\label{APP-CI}

\begin{table}[H]
\centering
\setlength\extrarowheight{1pt}
\ML{
\begin{tabular}{P{3cm}|P{3cm}|P{4.5cm}| P{3cm}}
 \hline
\multirow{2}{*}{Training Set} & \multicolumn{3}{c}{Response Time (min.) for \bf{R-MILP}}  \\ \cline{2-4}  
\multirow{2}{*}{} & 5th Percentile
& Mean & 95th Percentile \\
 \hline
2018Q2 & \textcolor{black}{0.80} & 1.49 & 2.19    \\
2018Q3 & \textcolor{black}{0.67} & 1.58 & 2.60  \\ 
2018Q4 & \textcolor{black}{0.50} & 1.54 & 2.49  \\
2019Q1 & \textcolor{black}{0.45} & 1.50  & 2.50 \\
\hline
Quarter Average & 0.61 & 1.53 & 2.45 \\
 \hline
\end{tabular}
\vspace{-0.04in}
\caption{\label{ci_train_milp} Response Times for Quarterly Training Sets ($q = 10, p = 11$)}
}
\end{table}

\begin{table}[H]
\centering
\setlength\extrarowheight{1pt}
\ML{
\begin{tabular}{P{3.2cm}|P{3cm}|P{4.5cm}| P{3cm}}
 \hline
\multirow{2}{*}{Training/Testing Data} & \multicolumn{3}{c}{Testing  Quarter Response Time (min.) \bf{R-MILP}}  \\ \cline{2-4}  
\multirow{2}{*}{} & 5th Percentile
& Mean & 95th Percentile \\
 \hline
2018Q2 / 2018Q3 & \textcolor{black}{0.82} & 1.72 & 2.79    \\
2018Q3 / 2018Q4 & \textcolor{black}{0.54} & 1.57 & 2.55  \\ 
2018Q4 / 2019Q1 & \textcolor{black}{0.74} & 1.80 & 2.83  \\
2019Q1 / 2019Q2 & \textcolor{black}{0.57} & 1.48  & 2.38 \\
\hline
Quarter Average & 0.67 & 1.64 & 2.64 \\
 \hline
\end{tabular}
\vspace{-0.04in}
\caption{\label{ci_test_milp} Response Times for Quarterly Testing Sets ($q = 10, p = 11$)}
}
\end{table}

\newpage

\section{Pseudo-codes  of Constructive Greedy Heuristics Approaches \label{Constructive_baseline}}
\vspace{-0.15in}
\begin{algorithm}[] 
\caption{OTR-Focused Heuristic \textbf{\textbf{H-OTR}}} \label{algo_baseline_hotr}
\begin{algorithmic}
\State {$I \gets$ Load all OTR locations }
\State {$J \gets$ Load candidate DBs}
\State {function BuildBaselineNetwork$(I, J):$}
    \Indent
    \State {$J_{o} \gets \emptyset$} \Comment{Initialize empty set of opened DBs} 
    \State {$I_{c} \gets \emptyset$}
    \Comment{Initialize empty set of selected OTR locations}
    \While {$|J_{o}| < q:$}
        \State {$i^{*} =  \arg \max_{i}\  \{\lambda_{i} | i \in I \}$}
        \Comment{Select highest-ranked OTR location $i^*$}
        \State {$j^{*} =  \arg \min_{j}\  \{d_{i^{*}j} | j \in J \}$}
        \Comment{Open DB  $j^*$  closest to OTR $i^*$}
        \State{$J_{o} \gets J_{o} + j^{*}$}
        \Comment{Add $j^*$ to the set opened bases}
        \State{$I \gets I \setminus \{i^*\}, J \gets J \setminus \{ j^{*}\}  $}
        \Comment{Remove opened DB $j^*$ and closest OTR  $i^*$ from candidate sets}
        \If{$|J_{o}| = 1$}
            \State{$\gamma_{j*}^{2} \gets 1$}
            \Comment{Assign 2 drones to DB which is closest to the highest-ranked OTR}
        \Else
            \State{$\gamma_{j*}^{1} \gets 1$}            \Comment{Assign 1 drone to other DBs}
        \EndIf
    \EndWhile
\EndIndent
\end{algorithmic}
\end{algorithm}


\begin{algorithm}[] 
\caption{DB-Focused Heuristic \textbf{H-DB} } \label{algo_baseline_hdb}
\begin{algorithmic}
\State {$I \gets$ Load all OTR locations }
\State {$J \gets$ Load candidate DBs}
\State {function BuildBaselineNetwork$(I, J):$}
    \Indent
    \State {$J_{o} \gets \emptyset$} \Comment{Initialize empty set of opened DB} 
    \State {$I_{c} \gets \emptyset$}
    \Comment{Initialize empty set of selected OTR location}
    \While {$|J_{o}| < q:$}
        \State {$j^{*} =  \arg \max_{j}\  \{c_{j} | j \in J \}$}
        \Comment{Select DB $j^*$ with largest coverage} 
        \State {$i^{*} =  \arg \min_{i}\  \{d_{ij^{*}} | i \in I \}$}
        \Comment{Select the OTR location closest to $j^*$}
        
        \State{$J_{o} \gets J_{o} + j^{*}$}
        \Comment{Add $j^*$ to the set opened bases}
        \State{$I \gets I \setminus \{i^*\}, J \gets J \setminus \{j^*\}$}
        \Comment{Remove opened DB $j^*$ and closest OTR $i^*$ from candidate sets}
        \If{$|J_{o}| = 1$}
            \State{$\gamma_{j*}^{2} \gets 1$}
            \Comment{Assign 2 drones to DB closest to OTR with highest $\lambda_i$}
        \Else
            \State{$\gamma_{j*}^{1} \gets 1$}            \Comment{Assign 1 drone to other DBs.}
        \EndIf
    \EndWhile
\EndIndent
\end{algorithmic}
\end{algorithm}

\section{Comparison with Constructive Greedy Heuristics \label{Constructive_baseline2}}

\vspace{-0.25in}
\begin{figure}[H]
\includegraphics[width = \textwidth]{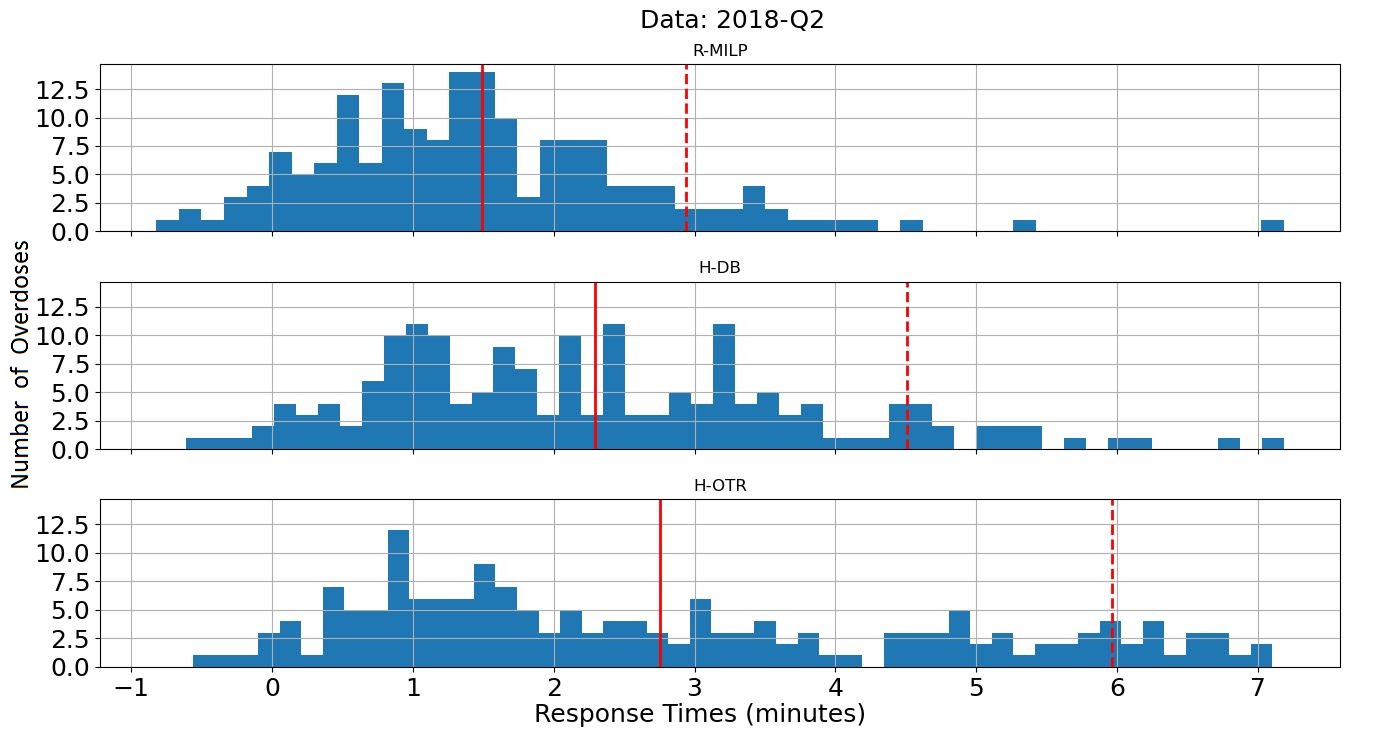}
\vspace{-0.25in}
\caption{\label{hist_heuristic_2018Q2} 
Histogram of average simulated individual response times $R_{i}$ for each OTR from 100 runs. The solid (red) line is the average response times, and the dashed (red) line is the 90th percentile.}
\end{figure}

\vspace{-0.05in}
\section{Data for QALY and Cost Analysis} \label{DATA-COST}
\vspace{-0.1in}
\begin{table}[h]
\centering
\setlength\extrarowheight{-0.1pt}
\begin{tabular}{P{4cm} |P{4cm}|P{3cm}|P{3cm}}
 \hline
 $T$ &  $\alpha$  & $c$ & T-QALY \\
 \hline
 11.4 years & 0.85 & 3\% & 8.47 years \\
 \hline
\end{tabular}
\vspace{-0.05in}
\caption{\label{qaly} QALY Analysis for each OHCA Survivor}
\end{table}

\begin{table}[H]
\centering
\setlength\extrarowheight{-0.01pt}
\begin{tabular}{P{2cm} |P{3.5cm}|P{2.5cm}|P{2cm}|P{3.5cm}}
 \hline
 Price per &  Annual & Number of & Discount & Total Discounted \\
 Drone &  Maintenance Cost & Drones & Rate & Cost in 4 Years. \\
  \hline
 \$15,000 & \$3,000 & 11 & 3\%  & \$287,664 \\
 \hline
\end{tabular}
\vspace{-0.05in}
\caption{\label{dronecost} Cost for Drone Network}
\end{table}

\vspace{-0.2in}
\section{Computational Efficiency Study}
\label{APP-CEFF}
\newpage
\vspace{-0.15in}
\begin{table}[H]
\centering
\resizebox{\columnwidth}{11.5cm}{%
\begin{tabular}{P{2cm} | P{2cm} | P{2.5cm} | P{2.5cm} | P{2.5cm} | P{2.5cm} }
 \hline
\multirow{2}{*}{$|I|$} & \multirow{2}{*}{Instance} & \multicolumn{4}{c}{ Solution Time (sec.)} \\ \cline{3-6} & & {\tt REFO} & {\tt OA} & {\tt OA-B\&C} & {\tt R-OA-B\&C} \\
 \hline
\multirow{6}{*}{50} 
& 1 & 7 & 3 & 3 & 2 \\
& 2 & 25 & 3 & 3 & 2\\
& 3 & 17 & 2 & 2 & 2\\
& 4 & 17 & 3 & 3 & 2\\
& 5 & 6 & 3 & 2 & 2\\
& Average & 14 & 3 & 3 & 2 \\
\hline
\multirow{6}{*}{100} 
& 1 & 3600 & 12 & 11 & 8   \\
& 2 & 3600 & 13 & 12 & 9\\
& 3 & 3600 & 11 & 10 & 8\\
& 4 & 3600 & 12 & 12 & 9\\
& 5 & 3600 & 12 & 12 & 9\\
& Average & 3600 & 12 & 11 & 9 \\
\hline
\multirow{6}{*}{150} 
& 1 & 3600 & 46 &  60 & 34 \\
& 2 & 3600 & 47 &  33 & 24\\
& 3 & 3600 & 37 &  29 & 25 \\
& 4 & 3600 & 41 &  43 & 26\\
& 5 & 3600 & 39 &  32 & 26\\
& Average & 3600 & 42 & 39 & 27 \\
 \hline
\multirow{6}{*}{200} 
& 1 & 3600 & 170 &  151 & 99  \\
& 2 & 3600 & 109 &  105 & 67\\
& 3 & 3600 & 161 &  122 & 96\\
& 4 & 3600 & 138 &  170 & 112\\
& 5 & 3600 & 221 &  169 & 129\\
& Average & 3600 & 160 & 143 & 101 \\
 \hline
\multirow{6}{*}{250} 
& 1 & 3600 & 706 &  358 & 250 \\
& 2 & 3600 & 517 &  302 & 200\\
& 3 & 3600 & 408 &  227 & 232\\
& 4 & 3600 & 431 &  440 & 265 \\
& 5 & 3600 & 233 &  218 & 237\\
& Average & 3600 & 459 & 309 & 237\\
 \hline
 \multirow{6}{*}{300} 
& 1 & 3600 & 831 & 252 & 406 \\
& 2 & 3600 & 608 & 374 & 290\\
& 3 & 3600 & 935 & 706 & 428 \\
& 4 & 3600 & 656 & 813 & 456 \\
& 5 & 3600 & 288 & 446 & 146\\
& Average & 3600 & 664 & 518 & 345 \\
 \hline
 \multirow{6}{*}{350} 
& 1 & 3600 & 595 & 699 & 475 \\
& 2 & 3600 & 1160 & 540 & 490 \\
& 3 & 3600 & 3172 & 893 & 497 \\
& 4 & 3600 & 710 & 620 & 429 \\
& 5 & 3600 & 1872 & 337 & 492\\
& Average & 3600 & 1502 & 618 & 477 \\
 \hline
 \multirow{6}{*}{400} 
& 1 & 3600 & 1515  & 778 & 545  \\
& 2 & 3600 & 1442 & 560 & 803 \\
& 3 & 3600 & 2824 & 1249 & 532 \\
& 4 & 3600 & 596 & 1295 & 365 \\
& 5 & 3600 & 847 & 471 & 655 \\
& Average & 3600 & 1445 & 871 & 580 \\
 \hline
 \multirow{6}{*}{450} 
& 1 & 3600 & 965 & 669 & 1901 \\
& 2 & 3600 & 2155 & 934 & 775\\
& 3 & 3600 & 3600 & 1815 & 601\\
& 4 & 3600 & 810 & 833 & 508\\
& 5 & 3600 & 1966 & 1033 & 2265\\
& Average & 3600 & 1899 & 1057 & 1210 \\
 \hline
\multirow{6}{*}{500} 
& 1 & 3600 & 3600 & 2943 & 1059 \\
& 2 & 3600 & 3600 & 1814 & 1245\\
& 3 & 3600 & 3600 & 3600 & 975\\
& 4 & 3600 & 2865 & 1208  & 2359\\
& 5 & 3600 & 1728 & 1872 & 637\\
& Average & 3600 & 3079 & 2287 & 1255 \\
 \hline
\end{tabular}%
}
\vspace{-0.135in}
\caption{\label{table_compute} Computational Efficiency for Base Case Scenario: $q = 10$ and $p = 11$}
\end{table}

\begin{table}[h]
\centering
\begin{tabular}{P{4cm} |P{2.5cm}|P{2.5cm}|P{2.5cm} | P{2.5cm}}
\hline
 Solution Time (seconds) & {\tt REFO} & {\tt OA} & {\tt OA-B\&C} & {\tt R-OA-B\&C}  \\
  \hline
 600 & 10\% & 54\% & 64\% 
 & 78\% \\ 
 \hline
 1200 & 10\% & 74\% & 84\%& 92\%  \\ 
 \hline
  1800 & 10\% & 80\% & 90\% & 94\% \\ 
 \hline
  2400 & 10\% & 86\% & 96\% & 100\% \\
 \hline
  3000 & 10\% & 90\% & 98\% & 100\% \\
 \hline
   3600 & 10\% & 92\% & 98\% & 100\% \\
  \hline
\end{tabular}
\caption{\label{Num_solved} Percentage of Problem Instances Solved to Optimality}
\end{table}

\end{appendices}

\end{document}